\documentclass{amsart} 

\usepackage[centertags]{amsmath} 
\usepackage{amsfonts,amssymb,amsthm,amscd,latexsym,eufrak}
\usepackage[mathscr]{euscript}
\usepackage[all]{xy}
\usepackage{hyperref} 

\newtheorem{theorem}{Theorem}[section]
\newtheorem{corollary}[theorem]{Corollary}
\newtheorem{proposition}[theorem]{Proposition}
\newtheorem{lemma}[theorem]{Lemma}
\theoremstyle{definition}
\newtheorem{definition}[theorem]{Definition}
\newtheorem{example}[theorem]{Example}
\newtheorem{remark}[theorem]{Remark}

\numberwithin{equation}{section}        

\newcounter{zahl}%
    {\end{list}}%

\newcommand{\co}{\colon\thinspace}
\newcommand{\mc}[1]{\ensuremath{\mathcal{#1}}}
\newcommand{\toh}[1]{\ensuremath{\stackrel{#1}{\rightarrow}}}

\newcommand{\wt}[1]{\widetilde{#1}}
\newcommand{\ho}[1]{\ensuremath{{\rm Ho}({#1})}}
\newcommand{\bu}{\bullet}%
\newcommand{\Hom}{\ensuremath{{\rm Hom}}}%
\begin{document}
\begin{abstract}
	We show that, for a principal ideal domain $R$, the homotopy category of Bousfield $R$-local spaces injects fully faithful into a homotopy category of simplicial pointed flat coalgebras.  
\end{abstract}
\author{Manfred Stelzer}
\footnote{Manfred Stelzer, Institut f\"ur Mathematik, Universit\"at Osnabr\"uck, Albrechtstrasse 28a, D-49076, Osnabr\"uck, Germany.
e-mail:mstelzer@uni-osnabrueck.de }
\title{Pointed integral coalgebras and  $\bold{R}$-local  homotopy theory}
\maketitle
\section{\large\textbf{Introduction}}
\subsection{}

 Categories of algebras and coalgebras have been used to model classes of topological spaces up to homotopy by many authors with great success.

  In this paper we investigate the situation for simplicial  coalgebras over a principal ideal domain.  \\
 Our  main theorem reads:\\
 \begin{theorem}\label{mainmain}\begin{it} Let $R$ be a localization $\mathbb{Z}[S^{-1}] $ of the integers at a set of primes $S$  or a prime  field. The simplicial chains over $R$ define a full and faithfull functor  from the homotopy category of   Bousfield $R$-local spaces to a homotopy category of a model category of flat  pointed simplicial cocommutative $R$-coalgebras.\end{it}
\end{theorem}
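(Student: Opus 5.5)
The plan is to follow the strategy of Goerss' theorem on simplicial coalgebras over algebraically closed fields, replacing ``algebraically closed'' by the flatness and pointedness hypotheses.

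\textbf{Model structure.} First I would put a model structure on the category $sCA_R$ of simplicial pointed cocommutative $R$-coalgebras whose underlying simplicial modules are flat, hence free since $R$ is a principal ideal domain. A map $f\co C\to D$ is a weak equivalence if $R\otimes f$ is a homology isomorphism, a cofibration if it is levelwise injective with flat cokernel, and fibrations are defined by the right lifting property. The model axioms would be obtained by a small object argument. This needs two inputs. First, every flat coalgebra over a principal ideal domain is the filtered colimit of its finitely generated flat subcoalgebras; here one uses that submodules of flat modules are flat over a principal ideal domain. Second, a cardinality bound for subcoalgebras, which gives a set of generating trivial cofibrations as in Goerss' argument.

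\textbf{The adjunction.} The chains functor $R[-]\co sSet_* \to sCA_R$, with the diagonal coproduct and augmentation $R[X]\to R[*]$, has a right adjoint. This adjoint sends a coalgebra $C$ to the simplicial set $\mathcal{P}(C)$ of levelwise grouplike (set-like) elements: $x\in C_n$ with $\Delta x = x\otimes x$ and $\epsilon(x)=1$. The functor $R[-]$ preserves cofibrations and weak equivalences, so we get a Quillen pair $R[-]\dashv\mathcal P$. Its derived functors are
\[
LR[-]\co \ho{sSet_*}\rightleftarrows \ho{sCA_R}\co R\mathcal P .
\]
Since $R$-homology equivalences become isomorphisms, $LR[-]$ factors through the Bousfield $R$-localization, the homotopy category of $R$-local spaces.

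\textbf{Full faithfulness.} Full faithfulness on $R$-local spaces is equivalent to the derived unit $X\to R\mathcal P(\widehat{R[X]})$ being a weak equivalence for each $R$-local $X$, where $\widehat{R[X]}$ is a fibrant replacement. I would compute $R\mathcal P$ of a fibrant replacement by resolving $R[X]$ via a cosimplicial or tower resolution by ``cofree'' pointed coalgebras, for instance $R[K(\pi,n)]$-type objects. This identifies the derived unit with the map to the Bousfield--Kan $R$-completion $R_\infty X$. The case of a prime field should reduce to $\mathbb F_p$-completion, using that set-like elements of $\mathbb F_p[Y]$ form $Y$ again. For $R$-local $X$, and more precisely on the relevant $R$-good spaces, the map $X\to R_\infty X$ is an equivalence. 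If the general $R$-local case fails through $R$-badness, I would adjust by restricting to nilpotent or $R$-good spaces, or by working with the localization model directly.

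\textbf{Main obstacle.} The hardest step is the computation of $\mathcal P$ on fibrant coalgebras over $\mathbb Z[S^{-1}]$. Over a nonclosed base, set-like elements of $R[Y]$ recover $Y$ only because $R$ has no idempotents. The hard part is to show that fibrant replacement does not create new grouplike elements, i.e.\ that $\mathcal P$ of the fibrant replacement has the homotopy type of the $R$-localization. I would first treat Eilenberg--MacLane spaces $K(A,n)$ with $A$ an $R$-module, using an explicit fibrant model built from $R[K(A,n)]$. I would then induct up Postnikov towers, using that $\mathcal P$ preserves homotopy pullbacks of fibrant objects. Finally, I would pass to the limit, controlling $\lim^1$ terms via flatness.
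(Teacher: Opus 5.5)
Your proposal has a genuine gap at the full-faithfulness step. You plan to identify the derived unit $X\to R\mathcal P(\widehat{R[X]})$ with the Bousfield--Kan completion $X\to R_\infty X$, but that identification is wrong. What this construction actually produces is the Bousfield $H_*(-;R)$-localization $X\to L_RX$; this is the corollary at the end of Section 11. $L_RX$ and $R_\infty X$ agree only on $R$-good spaces. Since $R_\infty$ inverts $H_*(-;R)$-equivalences, naturality shows that $L_R$ of an $R$-bad space (e.g.\ $S^1\vee S^1$ for $R=\mathbb Z$) is again $R$-bad. It is therefore an $R$-local space on which $X\to R_\infty X$ is not an equivalence. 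Your fallback of restricting to nilpotent or $R$-good spaces would thus prove a strictly weaker statement than the theorem, which covers all $R$-local spaces.

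The Postnikov induction is also not justified. $\mathcal P$, being right Quillen, preserves homotopy pullbacks of fibrant objects. But you would additionally need the fibrant replacement of $R[X]$ to be a homotopy limit of fibrant replacements of $R[-]$ applied to the Postnikov stages, and chains do not commute with such limits. ``Controlling $\lim^1$ via flatness'' does not address this. (Also, $R[K(\pi,n)]$ is not a cofree coalgebra.)

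The idea you are missing is the one the whole paper is built around, and it makes any computation of fibrant objects unnecessary. ``Pointed'' here is a coalgebra condition: every pure simple subcoalgebra is isomorphic to $R$. It is not a basepoint, which is how you use it when passing to $sSet_*$. For such flat coalgebras the inclusion $R\,Gr(C)\to C$ of the pure coradical splits naturally in $C$ (Theorem~\ref{split}, via the decomposition into pure irreducible components and the counit on each component). Hence for every map $f$ the map $R(Gr(f))$ is a retract of $f$, so $Gr$ sends every weak equivalence to an $H_*(-;R)$-equivalence.

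Now take a fibrant replacement $j\colon R(X)\to R(X)_f$. The map $X\cong Gr(R(X))\to Gr(R(X)_f)$ is an $H_*(-;R)$-equivalence, and its target is $R$-local because $Gr$ is right Quillen. So this map is the $R$-localization, and it is an equivalence when $X$ is $R$-local. This is exactly where the pointedness hypothesis is indispensable: the examples in Section 6 show the splitting fails without it.

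A smaller point: flat modules over a principal ideal domain are torsion-free but not free in general (e.g.\ $\mathbb Z[1/2]$ over $\mathbb Z$). The model structure is built from pure subcoalgebras of finite rank, not finitely generated ones.
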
 

The notion of a pointed coalgebra is explained below.  
 \\

For a separably closed field $K$ such a result was obtained by Goerss in \cite{G}.
 At the core of his argument is the fact that, over a separable field $K$,  the sum of the simple subcoalgebras  of a given  coalgebra $C$, splits off naturally.
 This sum of simple subcoalgebras  identifies with the \'{e}tale  part of $C$.
   In case $K$ is algebraically closed, the \'{e}tale  part is a sum with factors the  the ground field with trivial coalgebra structure.
Due to ramification, there are additional simple coalgebras over
  a general ring $R$.

  To deal with this fact, we restrict attention to the class of pointed coalgebras. By definition, a pointed coalgebra over $R$ admits only subcoalgebras isomorphic to $R$ as simple subcoalgebras.  In the  literature on coalgebras over a field, this class is well studied at least since Sweedler's classic book. It goes without saying that  this notion of pointedness differs from  the  one in use in the homotopy theory literature.\\
  Since we want the homotopy theory of simplicial coalgebras to behave well with respect to the tensor product, we consider only flat coalgebras. There is a monoidal model structure on simplicial flat coalgeras \cite{St}. \\
     In this setting  the  subobjects with flat quotient are most relevant. These are the pure subcoalgebras. The pure monomorphisms make up the cofibrations in the model  structure mentioned above.. So the notion of simple coalgebra and pointed coalgebra has to be modified to  that of a pure simple and (pure)  pointed coalgebra. For this class, we are able to generalize a large part of the elementary classical coalgebra structure theory over fields. This leads to the  following result which informs our main theorem.
  
  \begin{theorem}\begin{it}\label{natsplit} Let $R$ be a principal ideal domain and $C$  a flat  pointed cocommutative and coassociative coalgebra over $R$. Then   the sum of the pure simple subcoalgebras of $C$ splits of naturally.
  \end{it}	\end{theorem}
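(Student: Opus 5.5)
The plan is to adapt the classical argument over a field, where for a pointed cocommutative coalgebra one has $C \cong \bigoplus_{g} C_g$ with $C_g$ the irreducible component of the grouplike $g$. Here the sum of simple subcoalgebras $\bigoplus_g Rg$ splits off via the augmentations of the $C_g$. The first step is to pin down the pure simple subcoalgebras of a flat pointed $C$ over the PID $R$. By pointedness, each is of the form $Rg$ with $g$ grouplike, i.e. $\Delta g = g\otimes g$ and $\epsilon(g)=1$; purity just means $C/Rg$ is flat. I will show that distinct grouplikes are linearly independent; the field proof works after tensoring with the fraction field $K$, since $C$ is flat and hence torsion free, so $C\subset C\otimes K$. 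It follows that the sum $E=\sum_g Rg$ is the free module $\bigoplus_{g\in G(C)} Rg$. I also need $E$ to be pure, i.e. $C/E$ torsion free. For this I use that if $rx\in E$ then $x\otimes 1$ is a grouplike combination in $C\otimes K$, and the coefficients are forced into $R$ by applying $\Delta$ and the purity of each $Rg$.

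Next I construct the decomposition of $C$ by irreducible components. For a grouplike $g$, let $C_g$ be the sum of all subcoalgebras $D\subseteq C$ (pure, say) whose only pure simple subcoalgebra is $Rg$. Equivalently, $C_g = C\cap (C\otimes K)_g$, where $(C\otimes K)_g$ is the irreducible component in the $K$-coalgebra $C\otimes K$. This is the key device: the field theory (Sweedler, via the coradical filtration) gives $C\otimes K=\bigoplus_g (C\otimes K)_g$ for cocommutative $C\otimes K$, provided $C\otimes K$ is pointed with grouplikes exactly $G(C)$. I will check this using pointedness of $C$ and the fact that a grouplike of $C\otimes K$ has some multiple in $C$, and that the pure closure of its span is a pure simple subcoalgebra. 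Then I must show $C=\bigoplus_g C_g$, i.e. that the $K$-projections $\pi_g$ preserve $C$. This is the step I expect to be the main obstacle. I would try to express $\pi_g$ coalgebraically: $\pi_g(c)=\sum c_{(1)}\,\varphi_g(c_{(2)})$ for a central idempotent $\varphi_g$ in the dual algebra. Then I would argue that $\varphi_g$ takes $R$-values on $C$, because each finitely generated subcoalgebra is locally finite. This uses the fundamental theorem for coalgebras over a PID applied to pure, flat pieces, which is available since pure submodules of flat modules are flat and finitely generated torsion-free modules are free. Alternatively, I would prove directly that for each $x\in C$ the pure subcoalgebra generated by $x$ is finitely generated free, and decompose it using that $\epsilon$ restricted to the grouplikes yields the needed $R$-valued idempotents.

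Given $C=\bigoplus_g C_g$, each $C_g$ is a coalgebra with a unique grouplike, so $\epsilon$ splits it as $C_g= Rg\oplus \ker(\epsilon|_{C_g})$. Define $p\colon C\to E$ by $p|_{C_g}=\epsilon(-)g$. This is a coalgebra map retracting the inclusion $E\subseteq C$, and summing gives the splitting $C\cong E\oplus \bigoplus_g \overline{C_g}$ as modules, with $p$ a coalgebra retraction. Naturality holds because a coalgebra map $f\colon C\to D$ sends grouplikes to grouplikes and, being compatible with the $K$-decompositions, sends $C_g$ into $D_{f(g)}$. Hence $p_D\circ f = f|_E\circ p_C$.
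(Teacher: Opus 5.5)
\textbf{There is a genuine gap, and it cannot be filled, because the statement itself fails.} Your argument needs integrality at two points: purity of $E=\sum_g Rg$, and the claim that the projections $\pi_g$ of $C\otimes K$ preserve $C$. Both are false in general.

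Take $R=\mathbb{Z}$, a prime $p$, and $C=\mathbb{Z}e_0\oplus\mathbb{Z}e_1$ with
\[
\Delta e_0=e_0\otimes e_0,\qquad \Delta e_1=e_0\otimes e_1+e_1\otimes e_0+p\,e_1\otimes e_1,\qquad \epsilon(e_0)=1,\quad \epsilon(e_1)=0,
\]
that is, the $\mathbb{Z}$-dual of $A=\mathbb{Z}[x]/(x^2-px)$.
\begin{itemize}
\item A grouplike $e_0+be_1$ must satisfy $b^2=pb$. Hence $Gr(C)=\{g,h\}$ with $g=e_0$ and $h=e_0+pe_1$, corresponding to the algebra maps $x\mapsto 0$ and $x\mapsto p$.
\item A pure rank one subcoalgebra $\mathbb{Z}v$ has $\Delta v=\lambda\, v\otimes v$ with $\lambda\epsilon(v)=1$, so it is spanned by a grouplike.
\item The only other pure subcoalgebras are $0$ and $C$. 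So the pure simple subcoalgebras are exactly $\mathbb{Z}g$ and $\mathbb{Z}h$, both isomorphic to $\mathbb{Z}$, and $C$ is flat, cocommutative, coassociative and pointed.
\item However, $\mathbb{Z}g+\mathbb{Z}h=\mathbb{Z}e_0\oplus p\mathbb{Z}e_1$ has quotient $\mathbb{Z}/p$ in $C$. It is not pure, so it is not even a module direct summand, and no splitting exists.
\end{itemize}

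\textbf{Where your two steps break.} In your purity step, $pe_1=h-g\in E$ while $e_1=\frac1p(h-g)$. Applying $\Delta$ only gives $\Delta e_1=\frac1p(h\otimes h-g\otimes g)$, which does lie in $C\otimes C$, so nothing forces $\pm\frac1p$ into $\mathbb{Z}$. Purity of each line $\mathbb{Z}g$ (each is even a summand via $\epsilon$) says nothing about their sum.

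In the step you flagged, $C\otimes\mathbb{Q}=\mathbb{Q}g\oplus\mathbb{Q}h$, so $C_g=\mathbb{Z}g$, $C_h=\mathbb{Z}h$, and $C_g\oplus C_h\neq C$. The idempotent $\varphi_g=1-x/p\in A\otimes\mathbb{Q}$ takes the value $-1/p$ on $e_1$, and $A$ has no nontrivial idempotents. Local finiteness does not help. The obstruction is that $\mathrm{Spec}\,A$ is connected although its generic fibre consists of two rational points: the two $\mathbb{Z}$-points meet over $p$, i.e.\ $g\equiv h$ mod $p$.

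\textbf{The paper's proof has the same flaw.} It follows essentially your route: pure irreducible components compared with those of $C\otimes K$, then the counit on each component. It breaks at the same place. Lemma~\ref{inonGr} claims $Gr(C)\to Gr(C/pC)$ is injective. This feeds the purity assertion of Theorem~\ref{group-like is direct} and the purity of $\bigoplus_g P_g\to C$ in Theorem~\ref{purecomp}. In the example $\bar g=\bar h$, $P_g=\mathbb{Z}g$ and $P_h=\mathbb{Z}h$, so Theorem~\ref{irreducible generates} fails too.

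\textbf{A hypothesis that repairs it.} An extra assumption is needed, for instance that distinct grouplikes remain distinct modulo every prime of $R$. Then, for each finite rank pure piece $A^\vee$, the irreducible components of $\mathrm{Spec}\,A$ (the images of the $R$-points) are pairwise disjoint. Consequently the idempotents of $A\otimes K$ lie in $A$, and your argument goes through.
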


Let us comment on the notion of  pointed coalgebra.
For general coalgebras over a field the splitting of \ref{natsplit}  can be made to work by passing to an algebraic closure. In this case the simple objects in sets and in coalgebras match up.
Since the homotopy category of Bousfield $R$-local spaces does only depend on the core of $R$, this is good enough. Over a more general ring $R$ this strategy does not work as examples in section 6. show. There are additional simple coalgebras,  
  
   The passage to pointed objects is a way to avoid these extra "points".  \\
  

\subsection{ }
	\textbf{Historical references}\\
	
 After the seminal work of Quillen and Sullivan \cite{Q2},\cite{Su}, in which differential graded coalgebras and algebras were employed as  models of rational homotopy theory, there has been large progress for the algebraic description of the  p-complete homotopy  theory. Simplicial coalgebras and cosimplicial algebras over an algebraically closed field were used to model p-complete spaces by Goerss and Kriz \cite{G},\cite{Kr} respectively, A generalization of Goerss theorem to coalgebras in simplicial presheaves was given by Raptis in \cite{R}. A little later, Mandell proved that the homotopy category of finite type nilpotent p-complete spaces is modeled by a category of $E_{\infty}$-algebras over $\bar{\mathbb{F}}_p$ \cite{Ma2}.
  Very recently, Bachmann and Burglund \cite{BaBu} got rid of the finite type assumption in \cite{Ma2} using $E_{\infty} $-coalgebras. Mandell went on to show that 1-connected integral homotopy types of finite type are determined by   integral $E_{\infty}$-algebras \cite{Ma}. There he asked for a model which describes the maps in the homotopy category as well \cite{Ma}, \cite{Ma3}. Also  George Raptis asked for for such an integral extension of Goerss main result in \cite{R}. More recently, Yuan constructed a fully faithful functor from the homotopy category of finite and  1-connected spaces to a category of algebras over the sphere spectrum \cite{Y}. Doing away with all assumptions on the fundamental group,  Rivera, Wiestra and Zeinalian showed that integral homotopy types can be distinguished by integral  simplicial coalgebras \cite{R.W.Z.}.
 Again, the question on the maps remained open.
 Then Raptis and Rrivera  constructed in \cite{R.R.} model categories for simplicial coalgebras with equivalences which detect the fundamental group in general. Over algebraiclly closedfields a generalization of Goerss main resuct is proved.  \\
 Finally, Horel gave a model for the integral homotopy category of nilpotent spaces of finite type in \cite{Horel}.
 It is shown in \cite{Horel} that cosimplicial  binomial algebras serve as a faithful model. Our main result seems to be in a similar relation to the one of Horel as  Goerss paper relates to the one of Kriz.
  
A version for $\infty$-objects is not known in general. 
 One source  of the problem is that  the  multiplicative structure on the integral cohomology of Eilenberg-MacLane spaces is not well understood \cite{Per}. The same can be said about  the integral cofree  $E_{\infty}$-coalgebras.
\\

This paper is organized as follows. In section 2. we collect some results on purity from the literature and provide some results needed in the sequel. In sections 3. to 8. we generalize structure results on coalgebras over fields to pure pointed coalgebras over a principal ideal domain.
 The important notion of pure irreducibility is introduced in section 3..
Group-like elements in flat coalgebras are the subject of section 4. It is shown that they span a pure subcoalgebra.
In section 5. we show that a pointed coalgebra decomposes in pure irreducible components. The splitting asserted in \ref{natsplit} is proved in section 6.  Filtrations  of flat coalgebras are investigated in section 7. The important pure coradical filtration is the subject of section 8. Categorical properties of pointed flat coalgebras are studied in section 9. These results are applied in
section 10. which is devoted to the proof that simplicial pointed flat coalgebras carry a convenient  model structure.
Here we rely heavily on \cite{St} where such a model structure was constructed for unpointed objects.  There in turn we used the flat model structure on flat chain complexes due to Gillespie \cite{Gil 1} as a stepping stone.\\ 
 We assume that the reader is familiar with
the theory of model categories. For background material, we
recommend the  monographs of Hirschhorn \cite{Hi} and Hovey \cite{Hovey2}.

 Finally, the proof of  \ref{mainmain} is established in section 11. Given the structural results on flat pointed coalgebras, the argument is very much the same as the one in \cite{G}. In section 12. we relate pointed flat coalgebra to binomial rings. Thereby making contact to the model of Horel.  \\

\textbf{Conventions:}\\
All rings are commutative with unit. A ring $R$ is fixed throughout the paper.  If nothing else is mentioned, it is  supposed to be principal ideal domains.
All unadorned tensor products are over the ground ring $R$. The quotient field of the domain $R$ will be denoted by $K$.
\section{\large\textbf{Recollections on Purity and Purification}}
In this section we summarize some results about purity from the literature  \cite{P}, \cite{F.S.1},\cite {B.R.}, \cite{St}.
Denote the category $R$-modules and of flat $R$-modules by  $\mathbf{Mod}^{}$ and $\mathbf{Mod}^{flat}$ respectively.  
 Both categories are well known to be   complete and  cocomplete.\\

\begin{definition}\begin{it} A submodule $N$ of an $R$-module $M$ is called pure if 
		\[0\to N\otimes A\to M\otimes A\] is exact for every $R$-module $A$.
		An $R$-module $I$ is called pure injective if for any pure pair \[0\to N\to M\] the sequence \[Hom(M,I)\to Hom(N,I)\to 0\] is exact.\end{it}\end{definition} 
	
	
	\begin{definition}\begin{it} Let  $M\in \mathbf{Mod}^{flat}$ and $N$ a submodule of $M$.
			The purification $\widetilde{N}$  of  $N$ in $M$  is the intersection of all pure submodules of $M$ which contain $N$.\end{it}
	\end{definition}
	
	For $R$ a principal ideal domain  (or more generally a Pr\"ufer  
	domain) purity can be described by a relative divisibility property. Recall that a submodule $U$ of $M$ is called a RD-submodule if the equality $$rU=rM\cap U$$ holds for all $r\in R$. Then the classes of pure and RD submodules coincide over $R$. Hence, $\widetilde{N}$  is the relative divisibility hull of $N$.\\ 

 We record some properties of pure monomorphism from \cite {B.R.}.
They hold over a general ring $R$.

\begin{itemize}\item a composition of two pure monomorphisms is a pure monomorphism,
	\item if $fg$ is a pure monomorphism then $g$ is a pure monomorphism,
	\item pure monomorphisms are stable under pushouts,
	\item pure monomorphisms are closed under transfinite composition, 
	\item pure monomorphism are precisely directed colimits of split monomorphisms in the arrow category of $R$-modules.\\ 
\end{itemize}
See \cite[2.10.]{St} for:
\begin{lemma}\label{intersections}\begin{it}
 Let  $M\in \mathbf{Mod}^{flat}$ and $A,B$   pure $R$-submodules of $M$.
Then $A\cap B$ is pure in $M$.
\end{it}\end{lemma}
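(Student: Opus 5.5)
Over a principal ideal domain, purity coincides with relative divisibility (RD-purity), as recalled above. So the plan is to show that $A\cap B$ is an RD-submodule of $M$: for every $r\in R$ we need $r(A\cap B)=rM\cap A\cap B$. The inclusion $\subseteq$ is trivial. For $\supseteq$, take $x\in rM\cap A\cap B$ and write $x=rm$ with $m\in M$. Since $A$ is pure, $x\in rM\cap A=rA$, so $x=ra$ with $a\in A$; likewise $x=rb$ with $b\in B$.

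Next we use flatness of $M$. Over a PID a flat module is torsion free. Hence $r(a-m)=0$ forces $a=m$ when $r\neq 0$, and similarly $b=m$. Thus $m=a=b\in A\cap B$ and $x=rm\in r(A\cap B)$. The case $r=0$ is trivial. This gives the RD-condition and hence purity of $A\cap B$ in $M$.

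The one point needing care is the identification ``pure $=$ RD'' over a PID and ``flat $=$ torsion free'' over a PID. Both are standard and are recalled in the text above, so the argument has no real obstacle.

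As an alternative, I would present a second, more conceptual proof that works for any ring. Since $M/A$ and $M/B$ are flat, the monomorphism $M/(A\cap B)\hookrightarrow M/A\oplus M/B$ exhibits $M/(A\cap B)$ as a submodule of a flat module. Over a PID, submodules of torsion-free modules are torsion free, hence flat. Then $0\to A\cap B\to M\to M/(A\cap B)\to 0$ has a flat cokernel, so it is pure exact by the Tor long exact sequence. This relies on the fact that a submodule with flat quotient is pure, which is where the flatness hypothesis on $M$, $M/A$ and $M/B$ enters.
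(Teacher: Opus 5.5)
The paper gives no proof of this lemma; it cites \cite[2.10]{St} and later calls the fact straightforward. So there is no in-text argument to compare with. Both of your arguments are correct over a PID.

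The RD computation is the quickest route, given the characterization recalled in the paper. It uses the flatness of $M$ exactly where it is needed, in cancelling $r$ from $ra=rm$. Without that hypothesis the lemma fails even over $\mathbb{Z}$: in $\mathbb{Z}\oplus\mathbb{Z}/2$ the direct summands $\mathbb{Z}(1,0)$ and $\mathbb{Z}(1,\bar{1})$ meet in $2\mathbb{Z}(1,0)$. This is not pure, since $(2,0)$ lies in $2M$ but not in $2\cdot 2\mathbb{Z}(1,0)$.

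In the quotient argument you should say why $M/A$ and $M/B$ are flat. From $0\to A\to M\to M/A\to 0$ and flatness of $M$, $\mathrm{Tor}_1(M/A,N)$ is the kernel of $A\otimes N\to M\otimes N$, which vanishes by purity. That argument isolates the one ring-theoretic input it needs: submodules of flat modules are flat. It therefore carries over verbatim to rings of weak global dimension at most one, such as the Pr\"ufer domains mentioned in the paper's remarks.

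Your claim that the second proof works for any ring is false, and the lemma itself fails in general. Your own step ``submodules of torsion-free modules are torsion free, hence flat'' is exactly where the PID hypothesis enters. For a counterexample, take $R=k[x,y]$, $M=R^3$, $A=Re_1\oplus Re_2$ and $B=R(e_1+xe_3)\oplus R(e_2+ye_3)$.
\begin{itemize}
\item Both $A$ and $B$ are direct summands; $B$ has complement $Re_3$.
\item Their intersection is $A\cap B=R(ye_1-xe_2)$, because the syzygies of $(x,y)$ are generated by $(y,-x)$.
\item Hence $M/(A\cap B)\cong (x,y)\oplus R$, and the ideal $(x,y)$ is not flat.
\item So $A\cap B$ is not pure in the free module $M$.
\end{itemize}
Since the paper works over a PID throughout, this does not affect the lemma, but you should drop the remark or restrict it to rings of weak global dimension at most one.
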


How tensor product and intersection of pure modules relate is the subject of \cite[lemma 2.15]{St}: 

\begin{lemma}\begin{it} 
		 Let  $M\in \mathbf{Mod}^{flat}$ and $A,B$   pure $R$-submodules of $M$.
				Then the inclusion \[i:(A\cap B)\otimes (A\cap B)\to(A\otimes A)\cap (B\otimes B)\] is an isomorphism. \end{it} 
	 \end{lemma}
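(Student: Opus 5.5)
The plan is to reduce everything to statements about pure submodules of the flat module $M\otimes M$, using that $M$ is flat over the principal ideal domain $R$.

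\textbf{Step 1: purity in $M\otimes M$.} Since $A$ and $A\cap B$ (pure by Lemma \ref{intersections}) are pure in $M$, the quotients $M/A$, $M/B$, $M/(A\cap B)$ are submodules of flat modules modulo pure submodules, hence flat; over a PID this means torsion free. All modules in sight are therefore flat. Tensoring pure monomorphisms preserves purity, so $A\otimes A\to A\otimes M\to M\otimes M$ is a composition of pure monomorphisms and hence pure. The same holds for $B\otimes B$ and $(A\cap B)\otimes(A\cap B)$. So all three objects are submodules of $M\otimes M$, and the map $i$ is an inclusion of submodules, in particular injective. The task reduces to surjectivity.

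\textbf{Step 2: surjectivity after rationalization.} Tensor with the quotient field $K$. Since $K$ is flat, $K\otimes-$ commutes with finite intersections and with tensor products, giving
\[
K\otimes\bigl((A\otimes A)\cap(B\otimes B)\bigr)=(A_K\otimes_K A_K)\cap(B_K\otimes_K B_K)
\]
inside $M_K\otimes_K M_K$. For vector spaces the identity $(V\otimes V)\cap(W\otimes W)=(V\cap W)\otimes(V\cap W)$ is standard. To prove it, choose a basis of $V\cap W$, extend it by vectors spanning complements in $V$ and in $W$, and then extend to a basis of $V+W$ and of the whole space. Comparing coefficients of basis tensors gives the identity. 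Consequently the cokernel $Q$ of $i$ satisfies $K\otimes Q=0$, so $Q$ is torsion.

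\textbf{Step 3: killing the torsion.} This is the main obstacle. I will show that $(A\cap B)\otimes(A\cap B)$ is pure in $(A\otimes A)\cap(B\otimes B)$. By Step 1 it is pure in $M\otimes M$, hence pure in any intermediate submodule; over a PID this is the RD property $rN=rP\cap N$.

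Now take $x\in(A\otimes A)\cap(B\otimes B)$. Since $Q$ is torsion, $rx\in(A\cap B)\otimes(A\cap B)$ for some nonzero $r$. The RD property then gives $rx=ry$ with $y\in(A\cap B)\otimes(A\cap B)$. Because $M\otimes M$ is torsion free, $x=y$. Hence $Q=0$ and $i$ is an isomorphism.

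The points needing care are two. First, the identification of $K\otimes(-)$ with intersections, which holds because localization is exact. Second, the fact that the tensor product of pure monomorphisms between flat modules is again a pure monomorphism, which follows by writing each one as a directed colimit of split monomorphisms, as recorded in the list above.
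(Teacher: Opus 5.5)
Your proof is correct and takes essentially the same route as the paper's, which (via \cite{St}) says the argument uses only that an intersection of pure submodules is pure together with the field case. You use purity of $A\cap B$ to make $(A\cap B)\otimes(A\cap B)$ pure in $M\otimes M$, hence the cokernel of $i$ is torsion free. Exactness of $K\otimes-$ then reduces surjectivity to the vector space identity $(V\otimes V)\cap(W\otimes W)=(V\cap W)\otimes(V\cap W)$, which the paper cites and you prove directly by a basis argument.
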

 
 The proof uses only that the intersection of pure modules is pure and that the assertion holds over a field.
 This is true for arbitrary intersections.  The first fact is straight forward the second can be found in \cite[Exercise 1.2.8. ]{Rad}.  Hence, there is the following generalization:
 
  \begin{lemma}
 	Let $\{A_i\}_{i\in I}$, $\{B_i\}_{i\in I}$ families of pure submodules of  $M\in \mathbf{Mod}^{flat}$. There is an isomorphism
 	$$(A_i \cap_{i\in I} B_i)\otimes (A_i \cap_{i\in I} B_i) \cong \cap_{i\in I} (A_i \otimes B_i)$$
 \end{lemma}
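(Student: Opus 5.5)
We read the statement as the generalization of the preceding lemma to arbitrary families: for families $\{A_i\}_{i\in I}$, $\{B_i\}_{i\in I}$ of pure submodules of a flat $M$, the natural inclusion
$$\Bigl(\bigcap_{i\in I} A_i\Bigr)\otimes\Bigl(\bigcap_{i\in I} B_i\Bigr)\longrightarrow \bigcap_{i\in I}(A_i\otimes B_i)\subseteq M\otimes M$$
is an isomorphism. Taking $B_i=A_i$ recovers the preceding lemma for arbitrary families. The plan is to show that both sides are pure in $M\otimes M$, and then reduce to vector spaces over the quotient field $K$.

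\textbf{Step 1: purity of arbitrary intersections.} Over a principal ideal domain, flat means torsion free, and pure means RD. Let $N=\bigcap_i A_i$ and suppose $x\in N\cap rM$ with $r\neq 0$, say $x=rm$. For each $i$ we can write $x=ra_i$ with $a_i\in A_i$. Torsion freeness forces $a_i=m$, so $m\in N$. Hence $N$ is pure in $M$, and likewise $\bigcap_i B_i$.

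\textbf{Step 2: both sides are pure in $M\otimes M$.} A tensor product of pure monomorphisms is pure, being a composite of two pure monomorphisms of the form $U\otimes V\to U\otimes M\to M\otimes M$. So $(\bigcap A_i)\otimes(\bigcap B_i)$ and each $A_i\otimes B_i$ are pure in the torsion-free module $P=M\otimes M$. Applying Step 1 in $P$ shows that $\bigcap_i(A_i\otimes B_i)$ is pure as well.

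\textbf{Step 3: reduction to $K$.} A pure submodule $N$ of a torsion-free $P$ satisfies $N=P\cap(N\otimes K)$ inside $P\otimes K$. Indeed, if $p\in P$ and $rp\in N$ with $r\neq 0$, then $rp\in N\cap rP=rN$, so $p\in N$. For a family of pure $N_i\subseteq P$ this gives
$$\Bigl(\bigcap_i N_i\Bigr)\otimes K=\bigcap_i (N_i\otimes K).$$
To see this, write an element of the right side as $x/s$ with $x\in P$. Then $x\in P\cap(N_i\otimes K)=N_i$ for every $i$. Consequently two pure submodules of $P$ coincide once they coincide after $-\otimes K$. It therefore suffices to prove the claim for subspaces $V_i=A_i\otimes K$ and $W_i=B_i\otimes K$ of the $K$-vector space $V=M\otimes K$.

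\textbf{Step 4: the field case, which is the main point.} This is \cite[Exercise 1.2.8.]{Rad}; we would verify it as follows. Write
$$V_i\otimes W_i=\ker\bigl(V\otimes V\to (V/V_i\otimes V)\oplus(V\otimes V/W_i)\bigr).$$
Then $\bigcap_i(V_i\otimes W_i)$ is the kernel of the induced map into
$$\prod_i (V/V_i\otimes V)\times\prod_i(V\otimes V/W_i).$$
On the other hand, $(\bigcap V_i)\otimes(\bigcap W_i)$ is the kernel of
$$V\otimes V\to \Bigl(V/\bigcap_i V_i\Bigr)\otimes V\ \oplus\ V\otimes\Bigl(V/\bigcap_i W_i\Bigr).$$
The two kernels agree once we know that
$$\Bigl(V/\bigcap_i V_i\Bigr)\otimes V\to\prod_i (V/V_i\otimes V)$$
is injective, and similarly on the other side. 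This map factors as
$$\Bigl(V/\bigcap_i V_i\Bigr)\otimes V\hookrightarrow\Bigl(\prod_i V/V_i\Bigr)\otimes V\to\prod_i(V/V_i\otimes V).$$
The first map is injective because $V$ is flat. The second is injective because $V$ is free: an element of $(\prod_i X_i)\otimes V$ is a finite sum $\sum_k x_k\otimes e_k$ over distinct basis vectors $e_k$, and it maps to zero only if every $x_k$ vanishes componentwise.

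This injectivity of $(\prod X_i)\otimes V\to\prod(X_i\otimes V)$ is the only step where arbitrary rather than finite intersections need care. It is also the reason we pass to $K$ rather than working over $R$ directly.
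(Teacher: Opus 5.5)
Your proof is correct and follows the route the paper indicates. Arbitrary intersections of pure submodules are pure, both sides are pure in $M\otimes M$, and so it suffices to compare them after $-\otimes K$, which reduces everything to the field case; the only difference is that the paper cites \cite[Exercise 1.2.8]{Rad} for the field case, while you prove it directly from the injectivity of $(\prod_i X_i)\otimes V\to\prod_i(X_i\otimes V)$ for a free $V$.
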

 
 
 \begin{lemma}\begin{it}\label{infint} 
 		Let  $M\in \mathbf{Mod}^{flat}$ and $A_i ,B_i$  families   pure $R$-submodules of $M$.
 		Then the inclusion \[i:(A\cap B)\otimes (A\cap B)\to(A\otimes A)\cap (B\otimes B)\] is an isomorphism. \end{it} 
 \end{lemma}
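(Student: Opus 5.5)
We read the statement as the infinite version of the preceding lemma: for a family $\{A_i\}_{i\in I}$ of pure submodules of $M\in \mathbf{Mod}^{flat}$, with $A=\bigcap_i A_i$, the inclusion
\[ i\colon A\otimes A\to \bigcap_{i\in I}(A_i\otimes A_i)\subseteq M\otimes M \]
is an isomorphism. The strategy is to reduce to the corresponding statement for vector spaces over the quotient field $K$, where it is \cite[Exercise 1.2.8.]{Rad}. The link between the two settings is that, over a principal ideal domain, a pure submodule of a torsion-free module is \emph{saturated}. This means $U=(U\otimes K)\cap N$ for $U\subseteq N$, where we regard $N\subseteq N\otimes K$.

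\emph{Step 1 (saturation and purity).} For torsion-free $N$ we check that $U\subseteq N$ is pure iff it is saturated, using the RD description of purity recalled above. Suppose $U$ is RD and $n\in N$ satisfies $rn\in U$ with $r\neq 0$. Then $rn\in rN\cap U=rU$, so $rn=ru$ for some $u\in U$. Torsion-freeness gives $n=u\in U$. The converse implication is equally direct.

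\emph{Step 2 (the intersection is pure).} $A=\bigcap_i A_i$ is pure in $M$: it is an intersection of saturated submodules, hence saturated. Since pure submodules of flat modules are flat, $A$ and all $A_i$ are flat. The map $A\otimes A\to A\otimes M\to M\otimes M$ is a composite of pure monomorphisms, because tensoring a pure monomorphism with any module keeps it a pure monomorphism. So $A\otimes A$, and likewise each $A_i\otimes A_i$, is a pure submodule of the torsion-free module $M\otimes M$. In particular all these modules sit inside $(M\otimes K)\otimes_K(M\otimes K)=(M\otimes M)\otimes K$.

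\emph{Step 3 (localization commutes with the intersection).} We show $A\otimes K=\bigcap_i (A_i\otimes K)$ inside $M\otimes K$. The inclusion $\subseteq$ is clear. For the reverse, take $v$ in the right-hand side and choose $0\neq r\in R$ with $rv\in M$. Then $rv\in (A_i\otimes K)\cap M=A_i$ for every $i$ by saturation (Step 1). Hence $rv\in A$, and so $v\in A\otimes K$.

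\emph{Step 4 (conclusion).} Let $x\in\bigcap_i(A_i\otimes A_i)$. Its image in the $K$-vector space $(M\otimes K)\otimes_K(M\otimes K)$ lies in $\bigcap_i (A_i\otimes K)\otimes_K(A_i\otimes K)$. By the field case \cite[Exercise 1.2.8.]{Rad}, this intersection equals $\bigl(\bigcap_i A_i\otimes K\bigr)\otimes_K\bigl(\bigcap_i A_i\otimes K\bigr)$. By Step 3 that is $(A\otimes K)\otimes_K(A\otimes K)=(A\otimes A)\otimes K$. Thus $x\in \bigl((A\otimes A)\otimes K\bigr)\cap (M\otimes M)$, which equals $A\otimes A$ by Steps 1 and 2. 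Injectivity of $i$ is automatic, since everything embeds in $M\otimes M$.

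The main point requiring care is Step 1 together with the purity of $A\otimes A$ in $M\otimes M$ in Step 2. These are what make the passage to $K$ lossless. The field statement for infinite families itself is standard: the finite case follows from choosing bases adapted to the subspaces, and the infinite case follows by reducing any given element to finitely many of the $A_i$.
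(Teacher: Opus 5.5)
Your proof is correct and takes essentially the paper's route. The paper justifies this lemma by noting that the argument of \cite[lemma 2.15]{St} needs only two facts, that an arbitrary intersection of pure submodules is pure and that the statement holds over $K$ \cite[Exercise 1.2.8.]{Rad}, and your saturation steps are an elementwise form of its device of showing the inclusion is pure, so the cokernel is flat and vanishes because it vanishes after tensoring with $K$.
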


In this respect, purification behaves as follows \cite[lemma 2.17.]{St}:

\begin{lemma}\label{i iso}\begin{it}Let  $M\in \mathbf{Mod}^{flat}$ and $N$    submodule of $M$.  There is an isomorphism \[j:\widetilde{(M\otimes N)\cap (N\otimes M)}\to (M \otimes\widetilde{N} ) \cap (\widetilde{N}\otimes M).\]

\end{it}\end{lemma}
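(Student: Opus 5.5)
We need to prove Lemma \ref{i iso}: for a flat $M$ and a submodule $N$, the purification of $(M\otimes N)\cap(N\otimes M)$ in $M\otimes M$ is mapped isomorphically onto $(M\otimes\widetilde N)\cap(\widetilde N\otimes M)$. Both sides are submodules of the flat module $M\otimes M$, so the plan is to prove the two inclusions and take $j$ to be the identity on the common submodule. The point is that over a principal ideal domain purity means relative divisibility, and a submodule of a flat (torsion-free) module is pure exactly when the quotient is torsion-free. With this description, $\widetilde N=\{m\in M : rm\in N \text{ for some } 0\neq r\in R\}$, the preimage of the torsion of $M/N$.

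\emph{Inclusion $\subseteq$.} Since $M/\widetilde N$ is torsion-free, hence flat, the sequence $0\to M\otimes\widetilde N\to M\otimes M\to M\otimes (M/\widetilde N)\to 0$ is exact. Its cokernel $M\otimes(M/\widetilde N)$ is a tensor product of flat modules, hence flat and torsion-free. So $M\otimes\widetilde N$ is pure in $M\otimes M$, and by symmetry so is $\widetilde N\otimes M$. By Lemma \ref{intersections} their intersection is pure. It contains $(M\otimes N)\cap(N\otimes M)$, so it contains the purification.

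\emph{Inclusion $\supseteq$.} Take $x\in (M\otimes\widetilde N)\cap(\widetilde N\otimes M)$. We must find $0\neq r\in R$ with $rx\in (M\otimes N)\cap(N\otimes M)$.
\begin{enumerate}
\item Since $M\otimes M$ is torsion-free, tensoring with the quotient field $K$ is injective. Thus $x$ lives in $(M_K\otimes_K \widetilde N_K)\cap(\widetilde N_K\otimes_K M_K)$, where $(-)_K=-\otimes K$.
\item Every element of $\widetilde N$ has a nonzero multiple in $N$, so $\widetilde N_K=N_K$ inside $M_K$. Hence $x\in (M_K\otimes N_K)\cap(N_K\otimes M_K)$.
\item Choose a finite sum representation of $x$ in $M\otimes\widetilde N$. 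Clearing the finitely many denominators of the $\widetilde N$-entries gives $r_1\neq 0$ with $r_1x\in M\otimes N$.
\item Similarly, there is $r_2\neq 0$ with $r_2x\in N\otimes M$. Strictly, this requires the images of $M\otimes N$ and $N\otimes M$ in $M\otimes M$, which are fine since everything embeds in $M_K\otimes M_K$ by flatness.
\item Put $r=r_1r_2$. Then $rx$ lies in both, and since the purification is the relative divisibility hull, $x$ lies in it.
\end{enumerate}
Step 2 in fact shows the field-level statement: both sides have the same $K$-span, namely $(M_K\otimes N_K)\cap(N_K\otimes M_K)$.

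The main subtlety is that $N$ itself need not be pure, so $M\otimes N\to M\otimes M$ need not have torsion-free cokernel. We only use injectivity, which holds because $M$ is flat, together with the fact that elements of the intersection with a common nonzero multiple can be handled by taking a product of multipliers. The remaining care is to make precise that the relative divisibility hull of a submodule $U$ of a torsion-free module $T$ is $\{t : rt\in U \text{ for some } r\neq 0\}$. This follows from the RD-characterization, since that set is the smallest submodule containing $U$ with torsion-free quotient.
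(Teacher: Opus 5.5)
Your proof is correct. The paper does not prove this lemma; it cites \cite[Lemma 2.17]{St}. For the neighbouring purity statements (see the proof of Lemma \ref{45}) the paper's pattern has two halves. First show the relevant inclusion is pure, so its cokernel is flat. Then check that it becomes an isomorphism after $-\otimes K$, where everything is linear algebra.

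Your first inclusion is the purity half of that pattern. Your steps 1--2 are the rational half: both sides have $K$-span $(M_K\otimes N_K)\cap(N_K\otimes M_K)$. Put $U=(M\otimes N)\cap(N\otimes M)$ and $Q=(M\otimes\widetilde N)\cap(\widetilde N\otimes M)$. Your first inclusion gives $\widetilde U\subseteq Q$ with $\widetilde U$ pure. So $Q/\widetilde U$ is torsion, because the $K$-spans agree, and it sits inside the torsion-free module $(M\otimes M)/\widetilde U$. Hence it is zero, and steps 1--2 together with the first inclusion already finish the proof.

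Steps 3--5 finish differently, by clearing the finitely many denominators in representations of $x$ in $M\otimes\widetilde N$ and in $\widetilde N\otimes M$. This never uses steps 1--2, so those can be dropped. It avoids any appeal to the field case. It also shows the argument needs only that flat means torsion-free and pure means RD, so it works verbatim over Pr\"ufer domains. The reduction-to-$K$ route is less explicit, but it is uniform with the paper's other intersection and tensor lemmas in Section 2, where the field-level statement is the real input.
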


The following is a slight generalization of \cite[ Theorem 2.16.]{St}. It will be needed later on.

\begin{lemma}\label{45}\begin{it} Let  $M\in \mathbf{Mod}^{flat}$ and $A,B,C,D$ pure submodules of $M$. Then the map 
		$$i:(A\cap B )\otimes (C\cap D) \to (A\otimes C)\cap (B\otimes D )   $$
		
		is an isomorphism.\end{it}
\end{lemma}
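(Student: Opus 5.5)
Since $M$ is flat and $A,B,C,D$ are pure in $M$, they are flat and each inclusion $A\subseteq M$ etc.\ stays injective after tensoring with a flat module. So all the tensor products in the statement embed in $M\otimes M$, and $i$ is the evident inclusion of submodules of $M\otimes M$. Injectivity of $i$ is therefore automatic: $(A\cap B)\otimes (C\cap D)\to M\otimes M$ is injective, because $A\cap B$ and $C\cap D$ are pure by Lemma \ref{intersections}, hence flat, and a tensor product of pure monomorphisms between flat modules is a monomorphism. The content is surjectivity, i.e.\ that every element of $(A\otimes C)\cap(B\otimes D)$ already lies in $(A\cap B)\otimes(C\cap D)$.

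I would reduce to the field case, following the remark after Lemma \ref{infint} that the proof of \cite[Theorem 2.16]{St} only uses purity of intersections and the statement over a field. Over a field $K$ the claim $(A\cap B)\otimes(C\cap D)=(A\otimes C)\cap(B\otimes D)$ is standard linear algebra. One argues via $(A\otimes C)\cap(B\otimes D) = (A\otimes C)\cap (B\otimes M)\cap(M\otimes D)$, and $(A\otimes C)\cap(B\otimes M)=(A\cap B)\otimes C$ by choosing a basis of $M$ adapted to $A\cap B\subseteq A,B$ (compare \cite[Exercise 1.2.8]{Rad}).

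To pass from $K$ to $R$, let $x\in (A\otimes C)\cap(B\otimes D)$ and consider $M\otimes M\to (M\otimes K)\otimes_K(M\otimes K)$. This map is injective since $M\otimes M$ is flat, hence torsion free. Purity gives that $A\otimes K\cap B\otimes K = (A\cap B)\otimes K$ inside $M\otimes K$, and similarly for $C,D$. By the field case, $x\in ((A\cap B)\otimes(C\cap D))\otimes K$, so $rx\in (A\cap B)\otimes(C\cap D)$ for some $0\ne r\in R$.

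The key step, which I expect to be the main obstacle, is removing $r$. Here I use that $P:=(A\cap B)\otimes(C\cap D)$ is pure in $M\otimes M$. This holds because $A\cap B$ and $C\cap D$ are pure in $M$ (Lemma \ref{intersections}), and tensor products of pure monomorphisms are pure: write each as a directed colimit of split monomorphisms, and a tensor product of split monos is split mono. By the RD characterization over a PID, $rx\in rM\otimes M\cap P=rP$, so $rx=ry$ with $y\in P$. Torsion-freeness of $M\otimes M$ then gives $x=y\in P$, which proves surjectivity. An alternative is to deduce the lemma from Lemma \ref{infint} by the intersection identity above, but the localization argument seems cleaner.
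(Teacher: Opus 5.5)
Your proof is correct and takes essentially the same route as the paper. The paper notes that $i$ is pure, so its cokernel is flat, and then reduces to showing $i\otimes K$ is an isomorphism via the field case; this is exactly your combination of purity of $(A\cap B)\otimes(C\cap D)$ in $M\otimes M$ with the argument over $K$ (you just spell out the steps the paper delegates to \cite{St}).
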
	

\begin{proof}
	That the map $i$ is pure may be seen by the argument in \cite[ Lemma 2.11.]{St}. Hence, the cokernel of $i$ is flat. It is enough to see that $i\otimes K$ is an isomorphism.  The assertion holds for the case that $R=K$.  One reduces to this case as in \cite[ Theorem 2.16.]{St}.
\end{proof}

\begin{lemma}
	\label{pure mod p}\begin{it} Let $$i:N \to M$$ an injection of flat $R$-modules. Then $i$ is pure if and only if it induces an injection mod $p$ for all primes $p$ in $R$.\end{it}
\end{lemma}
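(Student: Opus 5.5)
We prove Lemma \ref{pure mod p} using the description of purity over a principal ideal domain as relative divisibility: $N\subseteq M$ is pure iff $rN=rM\cap N$ for all $r\in R$. Since $R$ is a PID, every nonzero $r$ factors as a unit times a product of primes. We therefore reduce the divisibility conditions to conditions for single primes, and then identify each prime condition with injectivity of $N/pN\to M/pM$.

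\emph{Necessity.} If $i$ is pure, tensor the injection with the module $A=R/(p)$. By the definition of purity, $N\otimes R/(p)\to M\otimes R/(p)$ is injective, i.e.\ $N/pN\to M/pM$ is injective. No flatness is needed in this direction.

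\emph{Sufficiency, first step.} Assume $N/pN\to M/pM$ is injective for every prime $p$. This says exactly that $pM\cap N=pN$. We show by induction on the number of prime factors that $rM\cap N=rN$ for every nonzero $r$; the case $r=0$ is trivial and units are clear. Write $r=pr'$ and let $x\in N$ with $x=pr'm$, $m\in M$. By the prime case, $x=pn$ for some $n\in N$. Then $p(n-r'm)=0$ in $M$. Since $M$ is flat over a domain, it is torsion free, so $n=r'm$. By induction $n\in r'N$, hence $x\in rN$. Thus $N$ is an RD-submodule, hence pure by the recalled equivalence of pure and RD-submodules over a PID.

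The only point needing care is the cancellation $p(n-r'm)=0\Rightarrow n=r'm$. This is where flatness of $M$ (torsion freeness) enters, and it is also why the statement assumes flat modules. The rest is formal.

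Alternatively, one could argue via the flatness of $\mathrm{coker}(i)$. Injectivity mod $p$ is equivalent to $\mathrm{Tor}_1(\mathrm{coker}\, i,R/(p))=0$, using $\mathrm{Tor}_1(M,R/(p))=0$ for flat $M$. So $\mathrm{coker}\, i$ has no $p$-torsion for any $p$, hence is torsion free and therefore flat over the PID. A monomorphism with flat cokernel is pure, via the long exact Tor sequence. I would present the RD argument as the main proof and mention this as a remark.
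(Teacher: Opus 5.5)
Your proof is correct, but it takes a different route from the paper's.

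\textbf{The paper's route.} The paper localizes at maximal ideals and tests purity against finitely presented modules. Over a local PID these are sums of copies of $R$ and $R/p^rR$. It then reduces from $R/p^{k+1}R$ to $R/pR$ and $R/p^kR$: it tensors the exact sequence $R/pR\rightarrowtail R/p^{k+1}R\twoheadrightarrow R/p^kR$ with $N$ and $M$ and runs a four-lemma chase. Flatness is what keeps the rows exact on the left; in particular it makes $M\otimes R/pR\to M\otimes R/p^{k+1}R$ injective, which is what the chase needs.

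\textbf{Your route.} You start from the RD characterization recalled in Section 2 and do the same reduction element by element. Your cancellation step $p(n-r'm)=0\Rightarrow n=r'm$ plays exactly the role of that injectivity in the paper's diagram.

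\textbf{What each buys.} Your induction runs over the full prime factorization of an arbitrary $r$. So you need no localization and no structure theorem for finitely presented modules, beyond what is already packaged in the RD equivalence. It also makes visible that the only input besides that equivalence is torsion-freeness of $M$; flatness of $N$ plays no role. The paper's version, in exchange, works directly from the tensor-product definition of purity and does not rely on the RD equivalence. Your $\mathrm{Tor}$ remark is an even shorter argument. It identifies injectivity mod every prime $p$ with torsion-freeness, hence flatness, of $\operatorname{coker} i$. That is precisely the form, pure inclusion $\Leftrightarrow$ flat cokernel, in which the paper uses purity elsewhere, for instance in the arguments about the pure coradical filtration.
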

\begin{proof} 	First, note that purity can be tested by tensoring with finitely presented modules and by considering all localizations at maximal ideals see \cite[4.89(2)]{La} and \cite[exercise 34 p.163]{La}.  Thus, we may assume that $R$ is a local.
	A  finitely presented module $M$
	decomposes uniquely as a finite direct sum of cyclic modules $R$ and  $R/p^r R $ with prime $p$.
	Consequently, we may test purity by tensoring with quotient rings  $R/p^r R$ with prime $p$.
	We claim that it is enough to consider the case $r=1$. To see this tensor the short exact sequence $$R/pR \rightarrowtail R/p^{k+1}R \twoheadrightarrow R/p^{k}R$$
	Tensoring with the flat objects $N$ and $M$ produces a diagram in which the rows are short exact sequences 
	
	\xymatrix{N\otimes R/pR \ar[d]\ar@{>->}[rr] &&
		N\otimes R/p^{k+1}R \ar@{->>}[rr]\ar[d] && N\otimes R/p^{k}R\ar[d] 
		\\
		M \otimes R/pR   \ar@{>->}[rr]^{} &&    M\otimes  R/p^{k+1}R
		\ar@{->>}[rr]^-{} &&   M\otimes  R/p^{k}R
	}
	
	and the assertion follows by induction and the sharp five lemma.
	\\
\end{proof}
\section{\large\textbf{Pure irreducibility} }
 
We denote the category of cocommutative coassociative $R$-coalgebras and the subcategory of coalgebras with flat underlying $R$-module by $\mathcal{CA}$ and
$\mathcal{CA}^{flat}$ respectively.

The classical fact that the intersection of two subcoalgebras of a given coalgebra over a field inherits the structure of a coalgebra, was generalized  \cite[lemma 3.9.]{St} for two pure subcoalgebras.


Using \ref{infint} this extends to:

\begin{lemma}\label{infcap} Let 
	$C\in \mathcal{CA}^{flat}$. The intersection $\cap_{i\in I} (D_i\cap F_i )$ of  pure subcoalgebras $D_i,F_i \subset C$ inherits the structure of a coalgebra. 
\end{lemma}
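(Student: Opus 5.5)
Write $E=\bigcap_{i\in I}(D_i\cap F_i)$. Since $E$ is simply the intersection of the family of pure subcoalgebras $\{D_i\}_{i\in I}\cup\{F_i\}_{i\in I}$, it suffices to treat an arbitrary family $\{A_j\}_{j\in J}$ of pure subcoalgebras of $C$ with $E=\bigcap_j A_j$. The plan is to show that the comultiplication $\Delta\colon C\to C\otimes C$ maps $E$ into the image of $E\otimes E\to C\otimes C$, and that this map is injective. Then $\Delta|_E$ factors uniquely through $E\otimes E$. Coassociativity, cocommutativity and counitality of the restricted structure are then inherited from $C$ because $E\otimes E\to C\otimes C$ and $E\otimes E\otimes E\to C\otimes C\otimes C$ are injective.

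\textbf{Injectivity and purity.} Each $A_j$ is pure in the flat module $C$, so it is flat. The intersection $E$ is again pure in $C$. For finitely many $A_j$ this is Lemma \ref{intersections}. For arbitrary families I use the RD-description valid over a principal ideal domain: if $e\in E$ and $e=rc$ with $c\in C$, then, since $C$ is torsion free, the element $c$ is uniquely determined, and $c\in A_j$ for all $j$ by purity of each $A_j$. Hence $rC\cap E=rE$. So $E$ is pure in $C$, hence flat, and $E\otimes E\to C\otimes C$ is injective, being a composite of tensor products of pure monomorphisms of flat modules.

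\textbf{Image of $\Delta$.} For each $j$ we have $\Delta(E)\subset\Delta(A_j)\subset A_j\otimes A_j$, so
\[\Delta(E)\subset\bigcap_{j\in J}(A_j\otimes A_j).\]
The key input is the isomorphism of Lemma \ref{infint}, in its family form recorded just before it:
\[\Bigl(\bigcap_{j}A_j\Bigr)\otimes\Bigl(\bigcap_{j}A_j\Bigr)\cong\bigcap_{j}(A_j\otimes A_j).\]
This identifies the right-hand side with $E\otimes E$, so $\Delta$ restricts to $E\to E\otimes E$. The counit is simply the restriction of $\varepsilon$.

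\textbf{Main obstacle.} The delicate step is the infinite-intersection identification. If I had to justify it myself rather than cite it, I would argue as follows. First, $E\otimes E$ is pure in $C\otimes C$, since it is a composite of pure monomorphisms. Next, let $x\in\bigcap_j(A_j\otimes A_j)$. Tensoring with the quotient field $K$, everything embeds in $C_K\otimes_K C_K$, and over a field the identity $\bigcap(V_j\otimes V_j)=(\bigcap V_j)\otimes(\bigcap V_j)$ holds (Radford's exercise cited above). Hence $rx\in E\otimes E$ for some nonzero $r\in R$. Finally, $E\otimes E$ is pure in the torsion-free module $C\otimes C$, so $x\in E\otimes E$.
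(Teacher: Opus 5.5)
Your proof is correct and follows the paper's route: the paper also extends the two-subcoalgebra case via the infinite-intersection isomorphism of Lemma~\ref{infint}, resting, as your argument does, on arbitrary intersections of pure submodules being pure and the identity holding over a field. One small point in your self-contained justification: to pass from $\bigcap_j\bigl((A_j\otimes K)\otimes_K(A_j\otimes K)\bigr)$ back to $E\otimes E$ you need $\bigcap_j(A_j\otimes K)=E\otimes K$ inside $C\otimes K$, which you should state explicitly. It holds because each $A_j$ is pure (if $sy\in C$ then $sy\in C\cap(A_j\otimes K)=A_j$ for all $j$), but it fails for non-pure families such as $2^n\mathbb{Z}\subset\mathbb{Z}$.
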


\begin{remark}Since the pushout of two pure subcoalgebras along their intersection is
	a pure subcoalgebra, we see that pure subcoalgebras have all the defining properties of closed subspaces in a topological space. Of course, one has to replace the empty set by the initial coalgebra. On the other hand, intersections and unions do not distribute.
\end{remark}

\begin{definition} \label{simple}
\begin{it} Let 
	   $C\in \mathcal{CA}^{flat}$. 
\begin{enumerate}
\item If $C$  admits no non-trivial pure subcoalgebras it is called pure simple.
\item $C$ is called  pointed, if  all pure simple subcoalgebras  are isomorphic to $R$.
\item A coalgebra    $C$ is called pure irreducible, if the intersection of all non trivial pure subcoalgebras 
 $D$,  $E$ is non trivial.
 \item A maximal pure irreducible subcoalgebra of  $C$ is called a pure irreducible component.
\end{enumerate} 
\end{it}\end{definition}

Write $\mathcal{CA}^{flat}_*$ for  the category of cocommutative, coassociative, flat and pointed $R$-coalgebras.\\
\begin{remark}
Since we have no use for the impure version, we simply speak of pointed coalgebras. 
\end{remark}

	For a field $K$ a finite pointed $K$-coalgebra  $C$ is isomorphic to the dual of  a finite local $K$-algebra 

Recall that  a flat $R$-module  $M$ is of finite rank $n$ if dim$M\otimes  K=n$. In \cite[theorem  3.6.]{St} the following generalization of the fundamental theorem for coalgebras over a field was obtained:

\begin{theorem}\label{colimits} A flat coalgebra $C$
	    is the filtered colimit of its pure subcoalgebras of finite rank.
\end{theorem}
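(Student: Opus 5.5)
Every element of $C$ should lie in a pure subcoalgebra of finite rank; then the set of such subcoalgebras, ordered by inclusion, is directed and its union is $C$. A filtered union of submodules is their colimit in $\mathbf{Mod}$, and since the forgetful functor from coalgebras creates colimits, it is also their colimit in $\mathcal{CA}^{flat}$.

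\textbf{Step 1 (finite-rank subcoalgebra over $K$).} For $x\in C$, pass to $C_K=C\otimes K$, a $K$-coalgebra into which $C$ embeds because $C$ is flat, hence torsion free. By the fundamental theorem for coalgebras over a field, $x$ lies in a finite-dimensional subcoalgebra $V\subset C_K$. Concretely, write $\Delta x=\sum_i a_i\otimes b_i$ with the $b_i$ linearly independent and take the span of the coefficients obtained from $(\mathrm{id}\otimes\Delta)\Delta x$. Set $D=V\cap C$. Then $D$ is pure in $C$, since $C/D$ embeds into $C_K/V$ and is therefore torsion free, which over a PID is the RD condition. Also $D\otimes K=V$, because for $v\in V$ some $r\neq0$ has $rv\in C$ and hence $rv\in D$. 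So $D$ has finite rank $\dim V$.

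\textbf{Step 2 ($D$ is a subcoalgebra).} We need $\Delta(D)\subset D\otimes D$. We know $\Delta(D)\subset (C\otimes C)\cap (V\otimes V)$, the intersection taken inside $C_K\otimes C_K$. The key claim is
$$(C\otimes C)\cap(V\otimes V)=D\otimes D .$$
To prove it, note first that $(C\otimes C)\cap(V\otimes C_K)=D\otimes C$: since $C/D$ is flat, $C\otimes C/D\otimes C\cong (C/D)\otimes C$ embeds into $(C_K/V)\otimes C_K$. By symmetry $(C\otimes C)\cap(C_K\otimes V)=C\otimes D$. Intersecting the two,
$$(C\otimes C)\cap(V\otimes V)=(D\otimes C)\cap(C\otimes D)=D\otimes D,$$
where the last equality is Lemma \ref{45} with $A=D$, $B=C$, $C=C$, $D=D$. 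Alternatively, $(D\otimes C)\cap(C\otimes D)\cong(D\otimes D)$ follows directly from flatness of $C/D$ and the splitting over $K$. The counit restricts without difficulty, so $D$ is a pure subcoalgebra of finite rank containing $x$.

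\textbf{Step 3 (directedness).} Given two pure subcoalgebras $D_1,D_2$ of finite rank, repeat Step 1 with $V=D_1\otimes K+D_2\otimes K$, a finite-dimensional subcoalgebra of $C_K$. The resulting $(D_1\otimes K+D_2\otimes K)\cap C$ is a pure subcoalgebra of finite rank containing both $D_1$ and $D_2$, so the system is directed and the colimit statement follows.

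The main obstacle is Step 2: we must make sure the intersections are computed inside $C_K\otimes C_K$, which relies on $C\otimes C\to C_K\otimes C_K$ being injective, and that identifying $(C\otimes C)\cap(V\otimes C_K)$ with $D\otimes C$ really uses only the flatness of $C/D$.
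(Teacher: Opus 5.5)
Your proof is correct. The paper does not prove this statement itself; it imports it from \cite[Theorem 3.6]{St}, so there is no in-text argument to compare against, and what you give is a self-contained proof. You reduce to the fundamental theorem over $K$ and set $D=V\cap C$. This $D$ is automatically pure, since $C/D\hookrightarrow (C\otimes K)/V$ is torsion free, and it has rank $\dim V$. You then get $\Delta(D)\subset D\otimes D$ from $(C\otimes C)\cap(V\otimes V)=(D\otimes C)\cap(C\otimes D)=D\otimes D$, which is Lemma \ref{45} or follows directly from flatness of $D$ and $C/D$.

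The argument uses only the Section 2 purity and intersection tools and is in the same spirit as Lemma \ref{basic} and the reductions to $K$ in Lemma \ref{4}. Your Step 2 is in effect the statement that $D\otimes D$ is its own purification in $C\otimes C$, which is the ingredient \cite[2.15]{St} behind Lemma \ref{basic}.

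Two small precisions:
\begin{enumerate}
\item The forgetful functor $\mathcal{CA}^{flat}\to\mathbf{Mod}$ does not create all colimits: pushouts must be reduced modulo torsion, cf.\ Lemma \ref{cocomplete}. A directed union of flat modules is flat, however, so your union is indeed the colimit in $\mathcal{CA}^{flat}$.
\item Your $D$ need not be finitely generated; over $\mathbb{Z}$ it can contain a divisible line $\mathbb{Q}y$ of primitives. This is exactly why finite rank, not finite generation, is the right finiteness notion here, and your construction handles it automatically.
\end{enumerate}
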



.

\begin{lemma}\label{irreducible pure}\begin{it} Let  $C\in \mathcal{CA}^{flat}$. Then a pure irreducible component of $C$  is pure in $C$.\end{it}\end{lemma}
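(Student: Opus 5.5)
Let $D$ be a pure irreducible component of $C$ and let $\widetilde{D}$ denote its purification in $C$. The plan is to show that $\widetilde{D}$ is again a pure irreducible subcoalgebra of $C$ containing $D$. Maximality of $D$ then forces $D=\widetilde{D}$, so $D$ is pure. There are two things to check: that $\widetilde{D}$ is a subcoalgebra, and that it is pure irreducible.

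\emph{Step 1: $\widetilde{D}$ is a subcoalgebra.} Since $D$ is a subcoalgebra, $\Delta(D)\subset D\otimes D\subset (C\otimes D)\cap(D\otimes C)$. Because $\widetilde{D}$ is the relative divisibility hull of $D$, any $x\in\widetilde{D}$ satisfies $rx\in D$ for some $r\neq 0$ (flatness over the PID $R$ means torsion freeness, so this description is available). Then $r\Delta(x)$ lies in $(C\otimes D)\cap(D\otimes C)$. Hence $\Delta(x)$ lies in the purification of $(C\otimes D)\cap(D\otimes C)$ inside the flat module $C\otimes C$. By Lemma~\ref{i iso} this purification is $(C\otimes\widetilde{D})\cap(\widetilde{D}\otimes C)$. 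By Lemma~\ref{45} (with $A=D=C$ and $B=C=\widetilde{D}$ in its notation), this intersection equals $\widetilde{D}\otimes\widetilde{D}$. So $\Delta(\widetilde{D})\subset\widetilde{D}\otimes\widetilde{D}$. The counit restricts trivially, and $\widetilde{D}$ is pure, hence flat, so $\widetilde{D}\in\mathcal{CA}^{flat}$.

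\emph{Step 2: $\widetilde{D}$ is pure irreducible.} Let $E,F$ be non-trivial pure subcoalgebras of $\widetilde{D}$. We must show that $E\cap F$ is non-trivial. The idea is to pass to $D$ through the intersections $E\cap D$ and $F\cap D$, or rather their analogues that are pure in $D$. Since $E$ is pure in $\widetilde{D}$ and $D\subset\widetilde{D}$, the submodule $E\cap D$ is pure in $D$: it is RD in $D$ because $E$ is RD in $\widetilde{D}$. It is also a subcoalgebra of $D$. One sees this either via Lemma~\ref{infcap}, after noting $E\cap D=E\cap\widetilde{D}$ restricted, or directly from Lemma~\ref{45} applied in $\widetilde D$ with its pure pieces.

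$E\cap D$ is non-trivial. If $0\neq e\in E$, then $re\in D$ for some $r\neq0$, and $re\in E$, so $re$ is a non-zero element of $E\cap D$ by torsion freeness. The same holds for $F\cap D$. By irreducibility of $D$, the intersection $(E\cap D)\cap(F\cap D)$ is non-trivial, and hence so is $E\cap F$. Thus $\widetilde{D}$ is pure irreducible, and maximality gives $D=\widetilde{D}$.

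The main obstacle I expect is in Step 2: verifying that $E\cap D$ is a subcoalgebra when $D$ is not yet known to be pure in $C$. The intersection lemmas in the paper require pure submodules of a flat ambient module, so I would work inside the ambient module $\widetilde{D}$. There $E$ is pure, but $D$ need not be. To get around this I would try the torsion-free argument directly. Let $x\in E\cap D$. Then $\Delta x\in (E\otimes E)\cap(D\otimes D)$ inside $\widetilde D\otimes\widetilde D$. Since $E$ is RD in $\widetilde D$, the quotients $\widetilde D/E$ and $D/(E\cap D)$ are torsion free, hence flat, and $D/(E\cap D)$ embeds in $\widetilde D/E$. From this I would aim to conclude $(E\otimes E)\cap(D\otimes D)=(E\cap D)\otimes(E\cap D)$, by tensoring the short exact sequences and comparing after $\otimes K$, as in the proof of Lemma~\ref{45}. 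Failing that, a fallback is to interpret irreducibility via non-trivial intersections of arbitrary non-zero subcoalgebras. In that version the purity question disappears, and only the non-triviality argument above is needed.
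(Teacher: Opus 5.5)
Your argument is correct and follows the paper's proof. You purify $D$, check that $\widetilde{D}$ is a pure irreducible subcoalgebra containing $D$, and conclude $D=\widetilde{D}$ by maximality; your Step 2 spells out what the paper compresses into one sentence. On the point you flag: Lemma \ref{infcap} and Lemma \ref{45} do not apply directly, since $D$ is not pure in $\widetilde{D}$, and redefining irreducibility is not an option. Your $\otimes K$ comparison does work, though. $E/(E\cap D)$ embeds in the torsion module $\widetilde{D}/D$, so $(E\otimes E)/((E\cap D)\otimes(E\cap D))$ is torsion. Since $(E\cap D)\otimes(E\cap D)$ is pure in $D\otimes D$, this gives $(E\otimes E)\cap(D\otimes D)=(E\cap D)\otimes(E\cap D)$.
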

\begin{proof} Let $D\subset C$ be a pure irreducible component.
The purification $\tilde{D} $ is a subcoalgebra of $C$ by \cite[lemma 2.17.]{St}.
If two pure subcoalgebras of $ \tilde{D}$ have non-trivial intersection then the same holds in $ D$. Thus $\tilde{D}$  is an irreducible subcoalgebra which contains $D$.
Hence we find that $D=\tilde{D}$ \end{proof} 

\begin{lemma}\label{irreducible rules} Let $C\in \mathcal{CA}^{flat}$.
 \begin{it}\begin{enumerate} 
\item	 Any  pure simple subcoalgebra of $C\in \mathbf{C}^{flat}$ is of finite rank.\\
\item  Any non-zero pure subcoalgebra of $C$ contains a pure simple subcoalgebra.\\
\item  The coalgebra $C$ 	is pure irreducible if and only if it contains a unique pure simple subcoalgebra $D$.\end{enumerate} \end{it}\end{lemma}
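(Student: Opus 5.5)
For (1), I will use Theorem \ref{colimits}. Let $D\subset C$ be pure simple and nonzero. By \ref{colimits}, $D$ is itself a flat coalgebra, so it is the filtered colimit of its pure subcoalgebras of finite rank. Since $D\neq 0$, at least one of these, say $E$, is nonzero. Purity is transitive: $E$ pure in $D$ and $D$ pure in $C$. Because $D$ admits no non-trivial pure subcoalgebras, $E=D$, so $D$ has finite rank.

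For (2), let $D\neq 0$ be a pure subcoalgebra. By \ref{colimits} again, $D$ contains a nonzero pure subcoalgebra $E$ of finite rank $n\geq 1$. Among the nonzero pure subcoalgebras of $E$, choose one, $F$, of minimal rank. I claim $F$ is pure simple. Suppose $G\subsetneq F$ is a nonzero pure subcoalgebra. Then $F/G$ is flat, hence torsion free, and nonzero. Its rank is therefore positive, so $G$ has strictly smaller rank than $F$. Since $G$ is pure in $E$ by transitivity, this contradicts minimality. Finally $F$ is pure in $C$ by transitivity. The key point here is that a proper pure submodule has strictly smaller rank, because the cokernel is a nonzero torsion-free module.

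For (3), first suppose $C$ is pure irreducible and $D_1,D_2$ are pure simple subcoalgebras. By Lemma \ref{infcap}, $D_1\cap D_2$ is a coalgebra, and it is pure by Lemma \ref{intersections}. Irreducibility makes it nonzero. Being a nonzero pure subcoalgebra of both simple objects, it equals each of them, so $D_1=D_2$. Existence of some pure simple subcoalgebra follows from (2).

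Conversely, suppose $C$ has a unique pure simple subcoalgebra $D$, and let $E,F$ be nonzero pure subcoalgebras. By (2), each of $E$ and $F$ contains a pure simple subcoalgebra. By transitivity that subcoalgebra is pure simple in $C$, hence equals $D$. So $D\subset E\cap F$, and the intersection is nontrivial.

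I expect the main obstacle to be the bookkeeping of purity relative to different ambient coalgebras. I need that "pure in a pure subcoalgebra" is the same as "pure in $C$", using the composition and cancellation properties of pure monomorphisms listed in Section 2. I also need that intersections of pure subcoalgebras are again pure subcoalgebras, which comes from \ref{intersections} and \ref{infcap}. The rank-minimality argument in (2) is the only substantive step.
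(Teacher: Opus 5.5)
Your proof is correct and takes essentially the paper's route. You use finite-rank pure subcoalgebras from Theorem \ref{colimits} for (1), and a rank-descent argument for (2), where your minimal-rank choice is the paper's induction on rank. For (3) you use the same intersection and containment argument, and you also justify two steps the paper leaves implicit: a proper pure subcoalgebra has strictly smaller rank because the cokernel is nonzero and torsion-free, and $D_1\cap D_2$ is a pure subcoalgebra by Lemmas \ref{intersections} and \ref{infcap}.
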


\begin{proof} Let $D$ be pure simple.  Any element  $x\in D$ is contained in a pure subcoalgebra of finite rank.
This proves (1).\\
By (1) we may assume that $C$ is of finite rank. If $C$ is not pure simple there is a pure subcoalgebra $D$ of strictly smaller rank. Then (2) follows by induction on the rank.\\
Suppose $C$ is pure irreducible and $D_1 ,D_2$ two simple pure subcoalgebras. Then the intersection $D_1 \cap D_2$ is a non-trivial pure subcoalgebra. Hence we find that  $D_1 = D_2$. On the other hand, let $D$ be a unique pure simple subcoalgebra. According to (2) it is contained in every non trivial pure subcoalgebra of$C$. But this means that $C$ is pure irreducible.  
\end{proof}

\begin{definition} Let 
	  $C\in \mathcal{CA}^{flat}$.
	\begin{enumerate}
	 \item For $F\subset C$ pure simple, let $C(F)$ denote the pure irreducible component defined by $F$.
	
	\item Write $SP(C))$ for the set of simple pure subcoalgebras of $C$.
	\item Write $PC(C)$ for the set of pure irreducible components of $C$.
	\end{enumerate}

\end{definition}

\begin{remark}
The structure results from \cite{St}  above hold more generally over   Pr\"ufer domains or 	B\'{e}zout domains.
\end{remark}

\section{\large\textbf{Group like elements in flat coalgebras} }
\begin{definition}\label{irreducible group like}\begin{it}
		 Let
		   $C\in \mathcal{CA}$  with comultiplication $\Delta$ and counit $\epsilon$. An element $x\in C$ with\\
 $$\Delta (x)= x\otimes x$$  and 
 $$\epsilon (x) =1$$
 is called group-like.\\
 We denote  the set   of group-like elements in $C$  by $Gr(C)$. \end{it}\end{definition}

There is an adjunction between the category of sets $\mathbf{Set}$ and $\mathbf{C}^{flat}$.\[R(-):\mathbf{Set} \rightleftarrows \mathbf{C}^{flat} :Gr\]
where $R(-)$ is the functor which sends a set $S$ to the free $R$-module on $S$  with the coproduct induced by the diagonal and $Gr$ is the functor of $R$ points  $$Gr(C) = Hom_{ \mathcal{C}^{flat}} (R,C  )$$  or equivalently of group-like elements. If $R$ has no non trivial idempotents, the unit $$ 1\to R Gr (-)$$ is a natural isomorphism. This ajunction induces an adjuntion between $\mathbf{Set}$ and  $\mathbf{C}^{flat}_*$.\\

\begin{lemma}\label{g generates} Let
	 $D\in \mathcal{CA}^{flat}$ be $R$-free of dimension 1. Then $D$ is generated as a coalgebra by a unique group-like element $g\in Gr(C).$
	
\end{lemma}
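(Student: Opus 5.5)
Since $D$ is free of rank one, choose a basis element $e$, so $D=Re$. Then $D\otimes D$ is free of rank one on $e\otimes e$, and the structure maps are determined by two scalars: write
\[
\Delta(e)=\lambda\, e\otimes e,\qquad \epsilon(e)=\mu, \qquad \lambda,\mu\in R .
\]
The plan is to show that $\lambda$ and $\mu$ are units with $\lambda\mu=1$, and then to rescale $e$ into a group-like element.

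First I impose the counit axiom $(\epsilon\otimes 1)\Delta=1$. Applied to $e$ it gives $\lambda\mu\, e=e$. Since $e$ is a basis element of a free module, this forces $\lambda\mu=1$, so both $\lambda$ and $\mu$ are units of $R$. Coassociativity holds automatically here and is not needed.

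Next I set $g=\mu e$. Then
\[
\epsilon(g)=\mu^2 ,
\]
which is not what we want, so I recompute with the inverse scaling instead. Put $g=\lambda e=\mu^{-1}e$. Then
\[
\epsilon(g)=\lambda\mu=1,\qquad \Delta(g)=\lambda^2\, e\otimes e=(\lambda e)\otimes(\lambda e)=g\otimes g .
\]
So $g$ is group-like, and since $\lambda$ is a unit, $g$ generates $D=Rg$ as an $R$-module and hence as a coalgebra.

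For uniqueness, let $h\in Gr(D)$ and write $h=rg$ with $r\in R$. The condition $\epsilon(h)=1$ gives $r=1$, so $h=g$. This uses only that $\epsilon(g)=1$; equivalently, $\Delta(h)=h\otimes h$ gives $r=r^2$, and then $\epsilon$ fixes $r$ among the idempotents. This settles both existence and uniqueness, and I expect no real obstacle.

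The one point needing care is the convention in the statement. The statement writes $Gr(C)$, presumably meaning $Gr(D)$, or $Gr(C)$ for an ambient $C$ containing $D$ as a subcoalgebra. In the latter case $g$ is group-like in $C$ as well, because the structure maps of $D$ are restricted from those of $C$; this requires $D\otimes D\to C\otimes C$ to be injective, which holds since $D$ and $C$ are flat.
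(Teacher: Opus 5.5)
Your proof is correct and follows the same route as the paper: pick a generator, write $\Delta(e)=\lambda\, e\otimes e$, and apply the counit axiom. It is in fact more careful than the paper's argument, which tacitly uses $\epsilon(d)=1$ to conclude $\lambda=1$; you instead derive $\lambda\mu=1$, rescale to $g=\lambda e$, and check uniqueness explicitly (delete the false start with $g=\mu e$ in a final write-up).
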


\begin{proof} Let $d\in D$ be a generator of the $R$-module underlying $D$. Then $D\otimes D$ is generated by $d\otimes d$.
	Hence, $$\Delta (d)=\lambda d\otimes d$$ for some $\lambda\not= 0$. Now  $$d=(\epsilon \otimes I)(\Delta(d))=  \epsilon(\lambda d)d= \lambda d.$$ It follows that $\lambda =1 $.
\end{proof}

\begin{corollary} Let
	 $C\in  \mathcal{CA}^{^{flat}}$ and $D\subset C$ be a $R$-free  subcoalgebra of dimension 1. Then $D$ is a retract of $C$ via the counit $\epsilon $ .
\end{corollary}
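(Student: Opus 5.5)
By Lemma~\ref{g generates}, $D$ is the free rank-one module $Rg$ on a unique group-like element $g$. The retraction will be the $R$-linear map
\[ r\colon C\to D,\qquad r(c)=\epsilon(c)\,g . \]
Since $\epsilon(g)=1$, we get $r(g)=g$, so $r$ restricts to the identity on $D=Rg$. Hence $D$ is a retract of $C$ as an $R$-module, and in particular the inclusion $D\subset C$ is split, hence pure.

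It remains to check that $r$ is a morphism of coalgebras. First, $\epsilon_D(r(c))=\epsilon(c)\epsilon(g)=\epsilon(c)$, so $r$ preserves counits. For comultiplication, write $\Delta(c)=\sum c_{(1)}\otimes c_{(2)}$. Then
\[ (r\otimes r)\Delta(c)=\sum \epsilon(c_{(1)})\epsilon(c_{(2)})\, g\otimes g . \]
By the counit axiom, $\sum \epsilon(c_{(1)})\epsilon(c_{(2)})=\epsilon\bigl(\sum \epsilon(c_{(1)})c_{(2)}\bigr)=\epsilon(c)$. On the other hand,
\[ \Delta_D(r(c))=\epsilon(c)\,g\otimes g , \]
and the two expressions agree. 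Equivalently, $r$ is the composite $C\xrightarrow{\epsilon}R\cong D$, where $R\cong D$ sends $1\mapsto g$ and is a coalgebra isomorphism because $g$ is group-like. The counit $\epsilon\colon C\to R$ is a coalgebra map to the terminal-like trivial coalgebra $R$, so $r$ is a coalgebra map. Thus $D$ is a retract of $C$ in $\mathcal{CA}^{flat}$ via the counit.

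No step is a real obstacle. The only point needing care is the identification $D=Rg$ with $\epsilon(g)=1$, which is exactly the content of Lemma~\ref{g generates}; it ensures that the composite $D\subset C\to R\to D$ is the identity rather than multiplication by some scalar.
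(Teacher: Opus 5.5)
Your proof is correct and is essentially the paper's argument. The paper takes the group-like generator $g$ and the coalgebra map $\alpha\colon R\to D$, $1\mapsto g$, and observes that $\epsilon\circ(D\hookrightarrow C)\circ\alpha=\mathrm{id}_R$; this is exactly your retraction $r=\alpha\circ\epsilon$, and your explicit check that $r$ respects comultiplication and counit only spells out what the paper leaves implicit in calling $\epsilon$ and $\alpha$ coalgebra maps.
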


\begin{proof} Let $g\in D$ be a group-like generator of $D$ and $\alpha:R \to D$ the map defined by $ 1\to g$.
	The the composition of coalgebra maps 
	$$R\stackrel{\alpha}{\to}  D\hookrightarrow  C \stackrel{\epsilon}{\to}R $$ is the identity on $R$.
\end{proof}




\begin{proposition}
	Let   $C\in \mathcal{CA}^{flat}$. There is a  1-1 correspondence between $Gr(C)$ and $R$-free 1-dimensional subcoalgebras of $C$. 
\end{proposition}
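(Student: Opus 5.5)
We define the two maps and check that they are mutually inverse. To a group-like element $g\in Gr(C)$ we assign the submodule $Rg\subset C$. To an $R$-free subcoalgebra $D\subset C$ of dimension $1$ we assign the unique group-like generator supplied by Lemma \ref{g generates}.

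First we check that $g\mapsto Rg$ is well defined. Since $\Delta(g)=g\otimes g\in Rg\otimes Rg$, the module $Rg$ is closed under the comultiplication. Strictly, one needs the image of $Rg\otimes Rg$ in $C\otimes C$; since $C$ is flat and $Rg\subset C$, the map $Rg\otimes Rg\to C\otimes C$ is injective, so $Rg$ is a subcoalgebra. Next, $Rg$ is free of rank one. Indeed, $\epsilon(g)=1$, so $rg=0$ forces $r=r\epsilon(g)=\epsilon(rg)=0$. Thus $g$ is torsion free and $R\to Rg$, $1\mapsto g$, is an isomorphism.

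Conversely, let $D$ be an $R$-free one-dimensional subcoalgebra. Lemma \ref{g generates} gives a unique group-like $g\in D$ with $D=Rg$. The element $g$ is also group-like in $C$, because the comultiplication of $D$ is the restriction of that of $C$ (again using that $D\otimes D\to C\otimes C$ is injective by flatness). This assignment is well defined.

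It remains to compose the two maps.
\begin{enumerate}
\item Starting from $D$, we obtain $g$ and then $Rg=D$, since $g$ generates $D$.
\item Starting from $g$, we obtain $D=Rg$. The group-like generator of $D$ is unique by Lemma \ref{g generates}, and $g$ itself is such a generator, so we recover $g$.
\end{enumerate}

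The only step needing care is the uniqueness in Lemma \ref{g generates}, which we should verify. Suppose $ug$ is group-like for some unit $u$. Then $u^2\,g\otimes g=ug\otimes ug=\Delta(ug)=u\,g\otimes g$. Since $g\otimes g$ generates the free module $D\otimes D$, this gives $u^2=u$, hence $u=1$, using that $R$ is a domain. Equivalently, $\epsilon(ug)=u$ must equal $1$.

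The main obstacle is this injectivity of $D\otimes D\to C\otimes C$, needed so that being a subcoalgebra and being group-like mean the same thing in $D$ and in $C$. It follows from flatness of $C$ and of $D$, by tensoring the inclusion twice.
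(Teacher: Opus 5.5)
Your proposal is correct and follows the paper's own route: it uses the assignment $g\mapsto Rg$, with inverse supplied by Lemma \ref{g generates}. You also fill in details the paper leaves implicit, namely freeness of $Rg$ via $\epsilon(g)=1$, injectivity of $D\otimes D\to C\otimes C$ by flatness, and uniqueness of the group-like generator via the counit.
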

\begin{proof}
	Given any $g\in Gr(C)$ there is a unique $R$-free 1-dimensional subcoalgebra of $C$ generated by $g$. On the other hand, any $R$-free 1 -dimensional subcoalgebra is generated by a unique $g\in Gr(C)$ as we have seen in \ref{g generates}.
\end{proof}.

 \begin{lemma}
 	\label{inonGr}
 Let $C\in \mathcal{CA}^{flat}$. For each prime $p$ in $R$ there is an injection 
 $$(R/pR)\otimes R(Gr(C)) \to (R/pR)\otimes C .$$
 \end{lemma}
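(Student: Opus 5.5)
The map in question is $\bar\iota = (R/pR)\otimes\iota$, where $\iota: R(Gr(C))\to C$ is the counit of the adjunction $R(-)\dashv Gr$. It sends a basis element $[g]$ to $g$. My plan is to reduce the claim to the classical fact over a field: distinct group-like elements in a coalgebra over a field are linearly independent.

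Put $k=R/pR$; this is a field because $R$ is a principal ideal domain and $p$ is prime. The module $\bar C = k\otimes C$ is a cocommutative $k$-coalgebra, with comultiplication
$$\bar\Delta = k\otimes\Delta : k\otimes C\to k\otimes(C\otimes C)\cong \bar C\otimes_k\bar C$$
and counit $k\otimes\epsilon$. For $g\in Gr(C)$ the image $\bar g = 1\otimes g$ satisfies $\bar\Delta(\bar g)=\bar g\otimes\bar g$ and $\bar\epsilon(\bar g)=1$, so it is group-like in $\bar C$. The domain $k\otimes R(Gr(C))$ is the $k$-vector space with basis $Gr(C)$. Hence injectivity of $\bar\iota$ is equivalent to two statements:
\begin{enumerate}
\item the reduction map $g\mapsto\bar g$ is injective on $Gr(C)$;
\item distinct group-like elements of $\bar C$ are $k$-linearly independent.
\end{enumerate}

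Statement (2) is the standard argument over a field. Take a minimal dependence $\bar g_0=\sum_{i\ge 1}\lambda_i\bar g_i$, with the $\bar g_i$ for $i\ge1$ independent. Applying $\bar\Delta$ and comparing coefficients in the basis $\bar g_i\otimes\bar g_j$ gives $\lambda_i\lambda_j=\delta_{ij}\lambda_i$. Applying $\bar\epsilon$ gives $\sum\lambda_i=1$. Together these force exactly one $\lambda_i$ to equal $1$ and the rest to vanish, so $\bar g_0=\bar g_i$, contradicting distinctness.

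Statement (1) is where I expect the real difficulty. Suppose $g\neq h$ in $Gr(C)$ but $g-h\in pC$, say $g-h=pc$. The same independence argument applied to $K\otimes C$ (flatness makes $C\to K\otimes C$ injective) shows that $g$ and $h$ are linearly independent over $K$. Expanding $\Delta(g-h)$ gives
$$g\otimes g-h\otimes h=(g-h)\otimes g+h\otimes(g-h)=p\bigl(c\otimes g+h\otimes c\bigr),$$
and also $\Delta(g-h)=p\,\Delta(c)$. Since $C\otimes C$ is torsion-free, $\Delta(c)=c\otimes g+h\otimes c$; that is, $c$ is $(h,g)$-primitive. Applying $\epsilon$ to $g-h=pc$ gives $p\,\epsilon(c)=0$, hence $\epsilon(c)=0$.

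Such skew-primitive elements do exist: for example, $C=R\{g,h\}\oplus Rc$ with $g-h=pc$ and $\Delta c=c\otimes g+h\otimes c$ is a legitimate coalgebra. This does not contradict the lemma, because there $\{g,h\}$ is not a basis of a pure submodule and $g,h$ remain independent modulo $p$. So the equation above has to be used more carefully. If $g\equiv h \pmod p$, then $c\equiv (g-h)/p$ rationally. Substituting into $\Delta(c)=c\otimes g+h\otimes c$ and working in $K\otimes C$, I expect a comparison of coefficients on the independent elements $g\otimes g$, $h\otimes h$, $g\otimes h$, $h\otimes g$ to force $g=h$. If that comparison fails to close the gap, my fallback is to invoke purity: restrict to the pure subcoalgebra of finite rank containing $g$ and $h$ supplied by \ref{colimits}, and use Lemma~\ref{pure mod p} applied to the purification of $R g+R h$. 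Settling statement (1) is the main obstacle; statement (2) is routine.
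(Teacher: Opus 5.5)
There is a genuine gap, and it cannot be closed. Statement (1) is false, and your own example is a counterexample to the lemma. You write that in your example $g,h$ ``remain independent modulo $p$''. But the defining relation $g-h=pc$ says precisely that $\bar g=\bar h$ in $C/pC$.

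To make this explicit, let $C$ be free on $h,c$ with
\[
\Delta h=h\otimes h,\qquad \Delta c=c\otimes h+h\otimes c+p\,c\otimes c,\qquad \epsilon(h)=1,\quad \epsilon(c)=0.
\]
This is the $R$-linear dual of the algebra $R[x]/(x^2-px)$, so it is a free, cocommutative, coassociative coalgebra. Its group-like elements correspond to the two algebra maps $x\mapsto 0$ and $x\mapsto p$. These are $h$ and $g=h+pc$, which is exactly your example. They are distinct but congruent mod $p$. Hence $[g]-[h]$ is a nonzero element of the kernel of the natural map $(R/pR)\otimes R(Gr(C))\to (R/pR)\otimes C$.

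The coefficient comparison you hope for is vacuous. Substituting $c=(g-h)/p$ into $\Delta(c)=c\otimes g+h\otimes c$ gives $(g\otimes g-h\otimes h)/p$ on both sides, so nothing forces $g=h$. Your purity fallback is circular. What you would need from Lemma~\ref{pure mod p} is purity of $Rg+Rh$ in $C$, and that is exactly what fails here: $Rg+Rh$ has index $p$ in $C$, and its purification is all of $C$.

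This $C$ is even pointed in the sense of Definition~\ref{simple}: its only pure simple subcoalgebras are $Rh$ and $Rg$, both isomorphic to $R$. So restricting to $\mathcal{CA}^{flat}_*$ does not rescue the statement. The same example shows that $RGr(C)$ need not be pure in $C$, since it contains $pc$ but not $c$. That contradicts Theorem~\ref{group-like is direct}, whose purity argument relies on this lemma.

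The paper's own proof has the same hole you located. It infers injectivity of $Gr(C)\to Gr(C/pC)$ from the linear independence of group-like elements over $R$, and your example refutes exactly that inference.

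Your reduction of injectivity to (1) and (2) is correct, and your argument for (2) is the standard one and is fine. What is missing is an additional hypothesis under which (1) holds. For example, you could assume that $RGr(C)$ is already pure in $C$. But then the lemma becomes a consequence of Theorem~\ref{group-like is direct} rather than an input to it.
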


\begin{proof} By the first part of the proof of  \ref{group-like is direct} below  the group-like elements in $C$ and $(R/pR)\otimes C$  are linearly independent. It follows that the induced map $$Gr(C)\to Gr(C/pC)$$ is injective. Because $$(R/pR)Gr((R/pR)\otimes C)\to (R/pR)\otimes C$$ is an injection the same is true for the restriction on the direct factor $(R/pR)Gr(C)$.
	\end{proof}
 \begin{theorem}\label{group-like is direct}\begin{it} For  $C\in \mathcal{CA}^{flat}$ the group-like elements  in $C$ form a basis of a pure $R$-free subcoalgebra  $RGr(C)$ in $C$. \end{it}\end{theorem}
 
 \begin{proof} 
 	In the following, use is made of the fact that a finitely generated submodule of a flat module $M$ over  a principal ideal domain $R$ is free.
 
 	We show by induction that any finite subset of $Gr(C)$ is linearly independent. 	Suppose this is true for all subsets of cardinality less than or equal to $n$ and there is a relation 
 $$ r_{n+1}g_{n+1} = \sum^{n}_{ i=1}r_i g_i $$
 with all $r_i \not= 0$ and $g_j \in Gr(C)$ all distinct.
 Applying $\Delta $ to this equation one finds
 $$ (**)\hspace{2cm}r_{n+1}(\sum^{n}_{i,j =1}r_i r_j g_i \otimes g_j  )= \sum^{n}_{i=1}r_i g_i \otimes g_i .$$
Since the set  $\{ g_i | i\leq n \}$ is linearly independent in $C$ the subset $\{ g_i \otimes g_j |i,j \leq n \}$ is linearly independent in $C\otimes C$. Because $R$ is a domain, we find
that this equation forces $r_i r_j = 0$ for $i\not= j$ which implies  $n=1$. But then $r_1 =r_2 $  since $$1=\epsilon (g_{2} )= \epsilon (  g_1 ).$$ Which contradicts our assumption.
 	\\
 	This leaves us to show that the free submodule $Gr(C)$ is pure in $C$.
 By \ref{pure mod p} we have to check that the reduction mod $p$ of $$RGr(C) \to C$$ is injective for all primes $p$ in $R$.
 However, this follows from \ref{inonGr}.

 \end{proof}

\begin{remark} A weaker property than linear independence for the group like elements,
for coalgebras over a general ring, is proved by the authors  in \cite{D.G.M.}. 
\end{remark}

The basic result below was implicit in \cite{St}:

\begin{lemma}\label{basic}\begin{it} Let $C\in \mathbf{C}^{flat}$ and $D\subset C$ a subcoalgebra. The purification $\widetilde{D}$ is a coalgebra.\end{it}
\end{lemma}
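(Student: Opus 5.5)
We need to show that $\Delta(\widetilde D)\subset \widetilde D\otimes\widetilde D$, where $\widetilde D$ is the purification of the subcoalgebra $D$ in the flat coalgebra $C$. The counit restricts automatically, and coassociativity and cocommutativity are inherited from $C$.

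The plan is to work with the set of elements whose coproduct lands in $\widetilde D\otimes\widetilde D$. Put
\[
P=\Delta^{-1}\bigl(\widetilde D\otimes \widetilde D\bigr)\cap \widetilde D .
\]
Since $D$ is a subcoalgebra, $D\subset P$. It suffices to show that $P$ is pure in $C$: then $\widetilde D\subset P$, because $\widetilde D$ is the smallest pure submodule containing $D$, and this is exactly the claim.

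First, $\widetilde D\otimes\widetilde D$ is pure in $C\otimes C$. Indeed, $\widetilde D\otimes\widetilde D\to \widetilde D\otimes C\to C\otimes C$ is a composition of pure monomorphisms, since tensoring a pure monomorphism with a flat module is again pure. By Lemma 2.5 (or Lemma \ref{45} with $A=B=C=D=\widetilde D$) it can also be written as $(\widetilde D\otimes C)\cap(C\otimes\widetilde D)$.

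Next we check the RD-property for $P$, using that purity coincides with relative divisibility over a principal ideal domain. Let $x\in C$ and $r\neq 0$ with $rx\in P$. Then $rx\in\widetilde D$, and since $\widetilde D$ is pure, $rx=ry$ for some $y\in \widetilde D$. Because $C$ is flat, hence torsion free over a domain, $x=y\in\widetilde D$. Moreover $r\Delta(x)=\Delta(rx)\in\widetilde D\otimes\widetilde D$. Since $\widetilde D\otimes\widetilde D$ is pure in $C\otimes C$, the same argument gives $\Delta(x)\in\widetilde D\otimes\widetilde D$; here $C\otimes C$ is torsion free because it is flat. Hence $x\in P$, so $rC\cap P=rP$ and $P$ is pure.

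The main obstacle is justifying the purity of $\widetilde D\otimes\widetilde D$ inside $C\otimes C$. If one only used the identification with $(\widetilde D\otimes C)\cap(C\otimes \widetilde D)$, one would need Lemma \ref{intersections} applied in the flat module $C\otimes C$, together with purity of each factor; that purity follows from flatness of $C$ and $\widetilde D$ by tensoring a pure sequence with a flat module. An alternative route is Lemma \ref{i iso}: since $\Delta(D)\subset (C\otimes D)\cap(D\otimes C)$, the same divisibility argument shows that $\Delta(\widetilde D)$ lies in the purification of this intersection, which Lemma \ref{i iso} identifies with $(C\otimes\widetilde D)\cap(\widetilde D\otimes C)\cong\widetilde D\otimes\widetilde D$. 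I would present the direct argument and mention this one as the link to \cite{St}.
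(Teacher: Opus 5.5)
Your proof is correct and is essentially the paper's argument. The paper simply cites the identification $\widetilde{D\otimes D}\cong\widetilde D\otimes\widetilde D$ from \cite{St}; you instead prove directly the only half that is needed, namely that $\widetilde D\otimes\widetilde D$ is pure in $C\otimes C$, and then use relative divisibility to conclude $\Delta(\widetilde D)\subset\widetilde D\otimes\widetilde D$.

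One harmless slip sits in an aside you never use. Lemma \ref{45} with all four modules equal to $\widetilde D$ is vacuous. To get $(\widetilde D\otimes C)\cap(C\otimes\widetilde D)\cong\widetilde D\otimes\widetilde D$, take the first and last modules to be $\widetilde D$ and the middle two to be the ambient coalgebra $C$.
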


\begin{proof}
	This follows from the isomorphism
	$$\widetilde{D\otimes D }\cong \widetilde{D}\otimes \widetilde{D }$$ proved in \cite[2.15]{St}.
\end{proof}



\begin{definition}\begin{it}
		Let $C\in  \mathbf{C}^{flat}$. The pure coradical $C_0 $ of $C$ is  defined to be the sum on all pure  simple subcoalgebras of $C$.
	\end{it}

	\end{definition}
	
	\begin{remark}In case $C\in \mathbf{C}_*^{flat}$ we have $C_0 = RGr(C)$.
	\end{remark}
We add the obvious: 

\begin{lemma}\begin{it}\label{important} Let $C\in\mathbf{C}_*^{flat} $ and $D\subset C$ a subcoalgebra. Then the  purification $\widetilde{D}$ is in $\mathbf{C}_*^{flat} $.\end{it}
\end{lemma}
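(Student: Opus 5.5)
The plan is to check the three ingredients of membership in $\mathbf{C}_*^{flat}$ one after the other: $\widetilde{D}$ is a coalgebra, it is flat, and every pure simple subcoalgebra of $\widetilde{D}$ is isomorphic to $R$.

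\emph{Coalgebra and flatness.} By Lemma \ref{basic}, $\widetilde{D}$ is a subcoalgebra of $C$. Since $R$ is a principal ideal domain, flat means torsion free. A submodule of the flat module $C$ is therefore flat, so $\widetilde{D}$ is flat.

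\emph{Pointedness.} Let $F\subset \widetilde{D}$ be a pure simple subcoalgebra.
\begin{enumerate}
\item Purity passes upward: $\widetilde{D}$ is pure in $C$ by construction, and a composition of pure monomorphisms is pure, so $F$ is pure in $C$.
\item Simplicity passes upward as well. Suppose $F$ had a non-trivial pure subcoalgebra $E$ in the sense of $C$, that is, $E$ pure in $C$ with $0\neq E\subsetneq F$. Since $E\to F\to C$ is a pure monomorphism, the cancellation property recorded in Section 2 ($fg$ pure implies $g$ pure) shows $E$ is pure in $F$, contradicting simplicity of $F$.
\item So $F$ is a pure simple subcoalgebra of $C$. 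Because $C$ is pointed, $F\cong R$.
\end{enumerate}
This gives $\widetilde{D}\in\mathbf{C}_*^{flat}$.

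The argument never uses that $D\subset\widetilde{D}$ beyond Lemma \ref{basic}. In fact the same reasoning shows that any pure subcoalgebra of a pointed coalgebra is pointed.

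The only delicate point is the meaning of ``pure simple''. One must make sure that purity of a subcoalgebra relative to $F$ and relative to $C$ agree in both directions. Pure in $C$ implies pure in $F$ is the cancellation property. Pure in $F$ implies pure in $C$ is the composition of pure monomorphisms. With both directions available, the notion of pure simple subcoalgebra is intrinsic, and the statement follows from pointedness of $C$.
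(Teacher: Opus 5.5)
Your proof is correct and follows the paper's argument. Like the paper, you observe that a pure simple subcoalgebra of $\widetilde{D}$ is pure in $C$ by transitivity of purity, so it is a pure simple subcoalgebra of $C$ and hence isomorphic to $R$. You also spell out flatness and the coalgebra structure (via Lemma \ref{basic}), which the paper leaves implicit.
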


\begin{proof} Let $S$ be a pure simple subcoalgebra of $\widetilde{D}$. Then $S$ is a pure simple subcoalgebra of $C$. Hence, isomorphic to $R$.
	
\end{proof}

\section{\large\textbf{Pure irreducible components} }

\begin{lemma}\label{irreducible direct}\begin{it}    The intersection of two different pure irreducible components of 
$C\in  \mathcal{CA}_*^{flat}$ 		is trivial.\end{it}\end{lemma}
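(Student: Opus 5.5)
Let $C_1\neq C_2$ be two pure irreducible components of $C\in\mathcal{CA}_*^{flat}$. By Lemma \ref{irreducible pure} both are pure subcoalgebras of $C$, so by Lemma \ref{intersections} and the intersection result \cite[lemma 3.9]{St} (Lemma \ref{infcap}) the intersection $C_1\cap C_2$ is a pure subcoalgebra of $C$. Suppose, for a contradiction, that $C_1\cap C_2\neq 0$. The plan is to show that then $C_1+C_2$, or rather its purification $\widetilde{C_1+C_2}$, is a pure irreducible subcoalgebra strictly containing $C_1$, which contradicts maximality.

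First, by Lemma \ref{irreducible rules}(2), $C_1\cap C_2$ contains a pure simple subcoalgebra $S$. Since $C$ is pointed, $S\cong R$ is spanned by a group-like element $g$. By Lemma \ref{irreducible rules}(3), $C_i$ contains a unique pure simple subcoalgebra, and so $S$ is the unique pure simple subcoalgebra of both $C_1$ and $C_2$.

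Next, set $E=C_1+C_2\subset C$. This is a subcoalgebra, since its image under $\Delta$ lies in $C_1\otimes C_1+C_2\otimes C_2\subset E\otimes E$. By Lemma \ref{basic} its purification $\widetilde E$ is a coalgebra, and by Lemma \ref{important} it is pointed. It remains to show that $\widetilde E$ is pure irreducible, i.e.\ by Lemma \ref{irreducible rules}(3) that $S$ is its only pure simple subcoalgebra. Let $T\subset\widetilde E$ be pure simple. Then $T=Rh$ with $h$ group-like. The key step is to show $h\in C_1\cup C_2$, so that $T=S$ by uniqueness in each $C_i$. By Theorem \ref{group-like is direct}, group-likes are linearly independent, and so is their reduction mod $p$ (Lemma \ref{inonGr}). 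Tensoring with $K$, $\widetilde E\otimes K=E\otimes K=C_1\otimes K+C_2\otimes K$, because purification does not change the rank locally. So $h$ is a group-like of the $K$-coalgebra $(C_1\otimes K)+(C_2\otimes K)$. Over a field, the group-likes of a sum of subcoalgebras lie in one of the summands: the coradical of a sum is contained in the sum of the coradicals, and the simple subcoalgebra $Kh$ must lie in one of the summands (Sweedler). Hence $h\in (C_i\otimes K)\cap C=C_i$, using purity of $C_i$ in $C$.

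The main obstacle is this last step, namely the field fact that a simple subcoalgebra of $D_1+D_2$ lies in $D_1$ or $D_2$, together with descent back from $K$ to $R$ via purity. I would prove the field fact by dualizing: $(D_1+D_2)^*$ embeds in $D_1^*\times D_2^*$, and a simple comodule/point must factor through one factor. Once it is established, $\widetilde E$ is pure irreducible and contains $C_1$. It also contains $C_2\not\subset C_1$ (since the components are different and maximal), so $\widetilde E\supsetneq C_1$, contradicting maximality. Hence $C_1\cap C_2=0$.
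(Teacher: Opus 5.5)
Your proof is correct, and its first half is exactly the paper's proof: a nonzero $C_1\cap C_2$ is a pure subcoalgebra, so it contains a pure simple $S=Rg$, and $S$ must then be the unique pure simple subcoalgebra of both components.

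The paper stops at that point with ``$g_1=g_2$''. It tacitly treats a pure irreducible component as determined by its group-like element. That assumption is built into the notation $P_g$, $C(F)$, but it is never proved, and it is exactly what is needed to pass from ``same group-like'' to ``same component''.

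Your second half supplies this missing step:
\begin{enumerate}
\item $\widetilde{C_1+C_2}$ is a pointed pure subcoalgebra.
\item Any group-like $h$ in it lies in $(C_1\otimes K)+(C_2\otimes K)$.
\item By the field case, $h$ lies in some $C_i\otimes K$, and by purity $h\in C_i$.
\item So $S$ is the only pure simple subcoalgebra of $\widetilde{C_1+C_2}$, and maximality forces $C_1=\widetilde{C_1+C_2}=C_2$.
\end{enumerate}
This is essentially the argument of Lemma~\ref{1234}, run on the purified sum rather than on the sum itself. It gives a complete proof, at the cost of a detour through $K$ and a citation of the field case.

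Two minor points. First, your references to linear independence of group-likes and to their reductions mod $p$ play no role and can be dropped. Second, for the field fact, either cite Sweedler as the paper does in Lemma~\ref{1234}, or phrase your dual argument as follows. One has $D_1^{\perp}\cdot D_2^{\perp}=0$ in $(D_1+D_2)^*$, because $\Delta(D_i)\subset D_i\otimes D_i$. Hence the maximal ideal $\ker(f\mapsto f(h))$ contains $D_1^{\perp}$ or $D_2^{\perp}$, which gives $h\in D_1$ or $h\in D_2$. As you wrote it, ``a point must factor through one factor'' would require extending an algebra map from the subalgebra $(D_1+D_2)^*\subset D_1^*\times D_2^*$, and that is not automatic.
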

\begin{proof}
	Let $P_{g_1}$ and $P_{g_2}$ two pure irreducible components with $g_1 , g_2$ the corresponding group-like elements. If  $P_{g_1}\cap P_{g_2}\not= 0$ then it is a pure  subcoalgebra. Hence it contains a pure simple  subcoalgebra $R_g$. It follows that $R_g =  R_{g_1} = R_{g_2}.$ This forces $g_1 = g_2$.
\end{proof}


\begin{lemma} \label{st}\begin{it} Let   $P_g\in  \mathcal{CA}_*^{^{flat}}$ be irreducible with group like element $g$ and $p\in R$ be prime. Then the reduction  mod $p$ map $$P_g \to P_g \otimes R/pR $$ induces a bijection
		$$\{g\} = Gr(P_g)=Gr(P_g\otimes R/pR)$$
	\end{it}
\end{lemma}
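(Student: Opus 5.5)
The plan is to prove the two inclusions separately: first that $Gr(P_g)=\{g\}$, then that every group-like element of $P_g\otimes R/pR$ is the reduction $\bar g$ of $g$. The second part is the real content. I will get it by reducing to finite rank and dualizing, so that the claim becomes a statement about a finite free $R$-algebra whose augmentation ideal is nilpotent.

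\emph{Step 1: $Gr(P_g)=\{g\}$ and injectivity.} Each $h\in Gr(P_g)$ spans a subcoalgebra $Rh$, which is pure by \ref{group-like is direct} and is therefore pure simple. By \ref{irreducible rules}(3), $P_g$ has a unique pure simple subcoalgebra, namely $Rg$. Hence $h=g$. By \ref{inonGr} the map $Gr(P_g)\to Gr(P_g\otimes R/pR)$ is injective, so it remains to show surjectivity.

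\emph{Step 2: reduction to finite rank.} Let $h\in Gr(P_g\otimes R/pR)$. By \ref{colimits}, $P_g$ is the filtered colimit of its pure subcoalgebras of finite rank, and reduction mod $p$ commutes with this colimit. So $h$ is the image of a group-like element of $D\otimes R/pR$ for some pure subcoalgebra $D\subset P_g$ of finite rank. Because $D$ is pure in $P_g$, the map $D\otimes R/pR\to P_g\otimes R/pR$ is injective by \ref{pure mod p}. Moreover $D$ contains a pure simple subcoalgebra by \ref{irreducible rules}(2), and this must be $Rg$. Thus $D$ is a pointed, pure irreducible coalgebra of finite rank containing $g$, and it is free since it is finitely generated and flat. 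It therefore suffices to treat $P_g=D$ free of finite rank $n$.

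\emph{Step 3: dualize.} Put $A=\Hom(D,R)$. This is a commutative $R$-algebra, free of rank $n$, with $A\otimes R/pR\cong \Hom_{R/pR}(D\otimes R/pR,R/pR)$ and $A\otimes K\cong \Hom_K(D\otimes K,K)$. Group-like elements of $D\otimes R/pR$ correspond to algebra maps $A\otimes R/pR\to R/pR$. Let $\mathfrak m=\ker(\mathrm{ev}_g\colon A\to R)$. The target is to show that $\mathfrak m$ is nilpotent. If so, then $\mathfrak m/p\mathfrak m$ is a nilpotent ideal of $A/pA$ with quotient $R/pR$. Any algebra map $A/pA\to R/pR$ kills nilpotents, hence kills $\mathfrak m/p\mathfrak m$, so it equals $\mathrm{ev}_{\bar g}$. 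This gives $h=\bar g$.

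\emph{Step 4 (main obstacle): nilpotency of $\mathfrak m$.} Since $A$ is torsion free, it is enough to show that $\mathfrak m\otimes K$ is nilpotent in $A\otimes K$. Equivalently, $D\otimes K$ should be a pointed irreducible $K$-coalgebra, because the dual of such a coalgebra is local with residue field $K$. I will use the correspondence $F\mapsto F\cap D$, $E\mapsto E\otimes K$ between subcoalgebras of $D\otimes K$ and pure subcoalgebras of $D$. That $F\cap D$ is a subcoalgebra follows from \ref{45} together with the fact that $D\otimes D$ is pure in $(D\otimes K)\otimes(D\otimes K)$. The map preserves rank, since $F\cap D$ is the relative divisibility hull.

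Under this correspondence, a simple subcoalgebra $F$ of $D\otimes K$ gives a pure simple subcoalgebra $F\cap D$ of $D$. Such a subcoalgebra is $Rg$, so $F=Kg$. Hence $D\otimes K$ has a unique simple subcoalgebra, $Kg$, and so it is pointed and irreducible. Its coradical filtration then makes $\mathfrak m\otimes K$ nilpotent, which completes the argument. The delicate point is checking carefully that $F\cap D$ is indeed a subcoalgebra; this is where \ref{45} and purity are used.
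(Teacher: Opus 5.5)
Your Step~2 has a genuine gap, and Step~3 depends on it. You assert that $D$ is free "since it is finitely generated and flat". But Theorem~\ref{colimits} only gives pure subcoalgebras of finite rank in the paper's sense, namely $\dim_K D\otimes K<\infty$. Such a $D$ need not be finitely generated, and in general you cannot choose a finitely generated one.

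Here is an example. Over $R=\mathbb{Z}$ take $C=\mathbb{Z}g\oplus\mathbb{Z}[1/q]x$, with $g$ group-like, $\Delta(x)=g\otimes x+x\otimes g$ and $\epsilon(x)=0$. This is a flat, pointed, pure irreducible coalgebra of rank $2$. Every pure submodule containing $x$ contains $\mathbb{Z}[1/q]x$, so no finitely generated pure subcoalgebra contains $x$. The lemma's conclusion holds for this $C$, but your duality does not: $\Hom(C,\mathbb{Z})=\mathbb{Z}\epsilon$ has rank $1$. For $p\neq q$ one gets $\Hom(C,\mathbb{Z})\otimes\mathbb{F}_p=\mathbb{F}_p$, whereas $\Hom_{\mathbb{F}_p}(C/pC,\mathbb{F}_p)$ is $2$-dimensional. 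So you cannot identify group-like elements of $D\otimes R/pR$ with algebra maps $A\otimes R/pR\to R/pR$. Nilpotency of $\mathfrak m\subset\Hom(D,R)$ then tells you nothing about $D/pD$.

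There is also a smaller error in Step~4. $D\otimes D$ is not pure in $(D\otimes D)\otimes K$, because a pure submodule of a divisible module over a principal ideal domain is divisible. Your conclusion there is still correct: $M=F\cap D$ is pure in $D$ with $M\otimes K=F$, so $M\otimes M$ is pure in $D\otimes D$, and hence $(D\otimes D)\cap(F\otimes_K F)=M\otimes M$.

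What is missing is an argument that handles torsion-free modules of finite rank directly, without dualizing over $R$. The paper inducts along the pure coradical filtration $C_n$ of $P_g$. It writes a lift of a group-like element of $P_g\otimes R/pR$ as $g+y$ with $\epsilon(y)=0$. It then uses flatness of the graded pieces $U_{2n}/U_{2n-1}$ of the induced filtration on $P_g\otimes P_g$ (Proposition~\ref{oo}) to show that $y\otimes y$, and hence $y$, is divisible by $p$ modulo $C_{n-1}$.

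Your Step~4 can be finished in the same spirit, since it shows $D\otimes K$ is pointed irreducible:
\begin{enumerate}
\item Set $D_n=D\cap(D\otimes K)_n$ and $D_n^+=D_n\cap\ker\epsilon$. These are pure in $D$, they exhaust $D$, and $D_0^+=0$.
\item For $y\in D_n^+$ one has $\Delta(y)-g\otimes y-y\otimes g\in D_{n-1}^+\otimes D_{n-1}^+$.
\item Reduction mod $p$ is injective on pure submodules. So if $\bar g+\bar y$ is group-like with $y\in D_n^+$, then $\bar y\otimes\bar y\in\overline{D_{n-1}^+}\otimes\overline{D_{n-1}^+}$.
\item Over a field this forces $\bar y\in\overline{D_{n-1}^+}$, and induction on $n$ gives $\bar y=0$.
\end{enumerate}

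Alternatively, you could keep your duality argument by completing at $p$ and dividing out the maximal divisible submodule, which is a coideal. That leaves a finitely generated free coalgebra with the same reduction mod $p$, but you would have to supply this step.
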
 

\begin{proof} During the proof we  rely on the results and use the notation in \ref{oo}.
We do an induction on the coradical filtration $n$. We do an induction on the coradical filtration $C_n =C_n (P_g)$. For $n=0$ there is nothing to prove. We consider the case $n=1$ next.\\
By \ref{pr} there is an isomorphism of modules
$$C_1 \cong C_0 \oplus Pr(P_g)$$ where $Pr$ stands for the primitives see \ref{pr}.\\
Write any $x\in C_1$ as $rg + y$ with $y$ a primitive element. 
Then we have
$$\Delta (x)=r(g\otimes g) + g\otimes y + y\otimes g$$ 
Now write $\bar{x}\in P_g \otimes R/pR$ for the reduction of $x$. Suppose $\bar{x}$ is group like.\\ Then 
$$\Delta(\bar{x}) =\bar{x}\otimes \bar{x}$$
 and
$$\epsilon(\bar{x}) =1.$$ Hence
 $$\Delta (x)=u(x\otimes x )+ p(v\otimes w)$$
 where $v\otimes w$ stands for a general element in $P_g \otimes P_g$ and $\bar{u}=1$.
 We find that $$\bar{r}=1$$
 Now write
 $$x\otimes x = r^2 (g\otimes g) +rg\otimes y +r y\otimes g + y\otimes y$$ comparing coefficients in $R_g$ we get $r=1$ and that $u$ is a unit in the localization $R_{(p)} $ of $R$ at $p$.
 So we find
 $$\Delta (x)-u(x\otimes x  ) = p(v\otimes w)= -u(y\otimes y )$$
 It follows that the submodule generated by $y\otimes y \in C_1 (P_g )/ R_g\otimes C_1 (P_g )/R_g$ is divisible by $p$ in $P_g \otimes P_g$ and since all $U_j \subset U_{j+1}$ are pure, it is divisible in $U_2 /U_1 \cong C_1 \otimes C_0 \oplus C_1 \otimes C_1 \oplus C_0 \otimes C_1$. But then the one generated by
  $y$ in $Pr(C)$  is divisible by $p$.
   Hence there is $z\in C_1 (P_g)$ with $pz=y$. But this implies $\bar{x}=\bar{g}$ which is what we want.\\
 
 Suppose the assertion is true for the coradical filtration at indices less than or equal to $n-1$.\\
 Let again $x\in C_n$ and write $$x=rg +y$$  with $y\in C_n$ and $\epsilon (y)=0$.
 Suppose that $\bar{x}$ is group like.\\
 Write $$\Delta (x)= \sum_{i\leq n} c_i \otimes d_{n-i}$$
 with $c_j ,d_j \in C_j$.\\
 On the other hand, write\\
 $$x\otimes x = r^2 (g\otimes g) +rg\otimes y +r y\otimes g + y\otimes y$$ and
 
 $$\Delta (x) = u(x\otimes x )+p(v\otimes w) $$ with a unit $u$ as above.
 Again we find $r=1$ and hence
  $$\Delta (x) = u(x\otimes x )+p(v\otimes w)=u ( (g\otimes g) +g\otimes y + y\otimes g + y\otimes y)) +p(v\otimes w )$$
  
  It follows
  $$0=  u ( (g\otimes g) +g\otimes y + y\otimes g + y\otimes y)) +p(v\otimes w )-\sum_{i\leq n} c_i \otimes d_{n-i}$$
  
  So $$u(y\otimes y)-p(v\otimes w) \in U_n =\sum_{i\leq n} C_i \otimes C_{n-i}$$ 
  
  As a consequence,
  
  $$0=u(y\otimes y)-p(v\otimes w)  \in U_{2n}/U_{2n-1} $$
 Since $U_j$ is pure in $U_{j+1}$ for all $j$, we get that
 
  $y\otimes y$ is divisible by $p$ in $U_{2n}/U_{2n-1}. $ 
  Now use the direct sum description of $U_{2n}/U_{2n-1} $ to find that the class of $y\otimes y$ is in the direct factor $C_n /C_{n-1}\otimes C_n /C_{n-1}$ in $U_{2n}/U_{2n-1} $.\\
  
  It follows that $y$ is divisible by $p$ in $C_n / C_{n-1}$
  So  we can write
   $y=pz + c_{n-1} $ with $c_{n-1}\in C_{n-1}$.
  
The reduction of $x$ is thus given by the reduction of $g+c_{n-1}$.
By induction hypothesis, this implies that $\bar{x}=\bar{g}$.

\end{proof}
	
\begin{lemma} \label{st}\begin{it} Let   $C\in  \mathcal{CA}_*^{^{flat}}$. Let
  $g\in Gr(C)$ and $P_g $  the pure irreducible component of $C$ with $g\in P_g$. Then for each prime element $p\in R$ the reduction  mod $p$ of the inclusion  $$\oplus _{g\in Gr(C)} P_g\to C$$  is an injection.\end{it}
	\end{lemma}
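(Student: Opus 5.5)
We need to show that for each prime $p$ the map $\bigl(\oplus_{g\in Gr(C)} P_g\bigr)\otimes R/pR\to C\otimes R/pR$ is injective. By \ref{colimits}, $C$ is the filtered colimit of its pure subcoalgebras of finite rank, and reduction mod $p$ commutes with filtered colimits. Filtered colimits of injections are injections. So the first step is to reduce to the case where only finitely many components occur and all relevant modules have finite rank. Concretely, suppose an element $\sum_{i=1}^n \bar x_i$ with $x_i\in P_{g_i}$ maps to zero. Then we pass to the purification of the subcoalgebra generated by the finitely many $x_i$ and the $g_i$. This is a pure subcoalgebra of finite rank and pointed by \ref{important}. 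We then intersect with the components $P_{g_i}$, using \ref{infcap} and \ref{intersections}, so that everything happens inside finite-rank pure irreducible pieces.

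Next we show that the sum $\sum_g P_g$ is direct and pure in $C$. Directness over $R$ should follow from a finite-rank argument after tensoring with $K$. Over the field $K$, a pointed cocommutative coalgebra $C\otimes K$ decomposes as the direct sum of its irreducible components, by the classical theory (Sweedler). The $P_g\otimes K$ lie in distinct irreducible components because, by \ref{irreducible direct}, they have distinct unique simple subcoalgebras $Rg$. So $\oplus P_g\to C$ is injective. By \ref{pure mod p}, the statement to prove is then exactly that this injection is pure. For this it suffices to show that the image $\sum P_g$ satisfies the RD-condition for $p$. Suppose $\sum x_i = p c$ with $c\in C$; we want each $x_i\in pP_{g_i}$.

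The core argument reduces mod $p$ and uses coalgebra structure, in the spirit of Goerss's splitting over a field. The plan is as follows:
\begin{enumerate}
\item Consider the image $\bar D_i$ of $P_{g_i}$ in $\bar C=C\otimes R/pR$. By purity of $P_{g_i}$ in $C$ (\ref{irreducible pure} and \ref{pure mod p}), $P_{g_i}\otimes R/pR$ injects into $\bar C$ as a subcoalgebra.
\item By the preceding lemma, $Gr(P_{g_i}\otimes R/pR)=\{\bar g_i\}$.
\item To use this, we first need that $P_{g_i}\otimes R/pR$ is irreducible over the field $R/pR$. Its coradical consists of simple subcoalgebras, and we would argue they are pointed, perhaps after extending to the algebraic closure of $R/pR$; this is the delicate point.
\item The $\bar g_i$ are distinct by \ref{inonGr}.
\item The subcoalgebras $P_{g_i}\otimes R/pR$ are then irreducible coalgebras over a field with distinct group-likes. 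By the classical fact that distinct irreducible components over a field are independent, their sum in $\bar C$ is direct, which is the claimed injectivity.
\end{enumerate}

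The main obstacle is step 3: proving that $P_g\otimes R/pR$ is irreducible over $R/pR$, i.e.\ that its coradical is exactly $R/pR\cdot\bar g$. Only group-like elements are controlled by the previous lemma, and the reduction could a priori acquire new non-pointed simple subcoalgebras. I would first try to use the coradical filtration of $P_g$, as in the preceding lemma. The plan is to show that $C_n(P_g)\otimes R/pR$ is contained in the $n$-th term of the coradical filtration of $\bar P_g$, by induction on $n$ using $\Delta(C_n)\subset\sum C_i\otimes C_{n-i}$ and purity of the $U_j$. Since $P_g=\bigcup C_n$ with $C_0=Rg$, this forces $(\bar P_g)_0=R/pR\cdot\bar g$, since a coalgebra whose coradical filtration starts from a one-dimensional group-like span is irreducible. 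If this fails, the fallback is to use instead the injectivity for finite-rank pieces via duality: $P_g^*$ is a local algebra, and we would show $P_g^*\otimes R/pR$ is still local.
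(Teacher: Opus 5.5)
Your outline is the paper's own argument: embed each $P_g\otimes R/pR$ into $C\otimes R/pR$ by purity, place it in the irreducible component of $\bar g$, and invoke Sweedler's decomposition over $R/pR$. The step that fails, however, is your step 4, not step 3. It fails so badly that the statement itself is false, because distinct group-likes of a flat pointed coalgebra need not stay distinct modulo $p$.

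Let $C$ be free on $h,c$ with $\Delta(h)=h\otimes h$, $\Delta(c)=h\otimes c+c\otimes h+p\,c\otimes c$, $\epsilon(h)=1$, $\epsilon(c)=0$. This is the $R$-dual of $R[x]/(x^2-px)$, hence coassociative and cocommutative. The conditions $\Delta(ah+bc)=(ah+bc)\otimes(ah+bc)$ and $\epsilon(ah+bc)=1$ force $a=1$ and $b(b-p)=0$. So $Gr(C)=\{h,g\}$ with $g=h+pc$.

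Since $\{h,c\}$ and $\{g,c\}$ are both bases, $Rh$ and $Rg$ are direct summands. By Lemma \ref{g generates}, every pure rank-one subcoalgebra is spanned by a group-like, so these are the only pure simple subcoalgebras. Hence $C$ is pointed, and since $Rh\cap Rg=0$ its pure irreducible components are $P_h=Rh$ and $P_g=Rg$.

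But $g-h=pc$, so $\bar g=\bar h$, and $(\bar h,-\bar g)$ is a nonzero element in the kernel of $(Rh\oplus Rg)\otimes R/pR\to C\otimes R/pR$. Equivalently, $R\,Gr(C)=Rh\oplus pRc$ is not pure in $C$. In Lemma \ref{inonGr}, the passage from linear independence of group-likes (over $R$ and over $R/pR$) to injectivity of $Gr(C)\to Gr(C/pC)$ is a non sequitur. The purity half of Theorem \ref{group-like is direct}, which the paper cites for exactly your step 4, is false. The same $C$ also contradicts Theorem \ref{irreducible generates} and the splitting in Theorem \ref{split}, since $Rh\oplus Rg$ has index $p$ in $C$.

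Your concern in step 3 is legitimate: the paper simply asserts that $P_g\otimes R/pR$ lies in a single irreducible component. It can be settled, but not as you phrased it. Every coalgebra over a field is exhausted by its own coradical filtration, so comparing $C_n(P_g)\otimes R/pR$ with that filtration proves nothing.

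Compare instead with $W_n=\wedge^{n+1}(k\bar g)$ inside $\bar P_g=P_g\otimes k$, where $k=R/pR$. Since $\Delta(C_n)\subset C_{n-1}\otimes P_g+P_g\otimes C_0$ and $C_0=Rg$, induction shows that the image of $C_n$ lies in $W_n$; no purity of the $U_j$ is needed. We have $P_g=\bigcup_n C_n$ by Lemma \ref{4}, whose proof is unaffected here because $C_0=Rg$ is a direct summand. Hence $\bar P_g=\bigcup_n W_n$. The field case of Lemma \ref{999} then puts every simple subcoalgebra of $\bar P_g$ into $k\bar g$, which also yields your step 2.

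With this, steps 1, 3 and 5 show that, for a given $p$, the statement holds if and only if distinct group-likes of $C$ stay distinct modulo $p$. The converse direction holds because $Rg$ is split off $P_g$ by $\epsilon$. That is a genuine extra hypothesis, and the example above violates it, so no proof of the statement as written can succeed.
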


\begin{proof}

	The reduction $P_g \otimes R/pR$ of the pure subcoalgebra $P_g$ lies is a irreducible component
	 $U_{\bar{g}} $ where $\bar{g}$ is the 
	 reduction of $g$.
 The reductions of  different  simple coalgebras corresponding to different $g\in Gr(C)$ remain different after reduction by \ref{group-like is direct}.\\
	 Moreover, there is an isomorphism
	$$\oplus_{{\bar{g}}\in Gr(C/pC))} U_{{\bar{g}}} \cong C\otimes R/pR $$  by the classical result for fields \cite[theorem 8.5.c]{S}.  The map in question is thus given as   
 $$\oplus_{g\in Gr(C)} P_g \otimes R/pR  \to \oplus_{\bar{g}\in Gr(C/pC))} U_{\bar{g}} \cong C\otimes R/pR .$$
  This proves the assertion.
	
\end{proof}

The next result is a direct consequence of \ref{pure mod p} and
\ref{st}
\begin{theorem}
\label{purecomp}\begin{it} For   $C\in \mathbf{C}_* ^{fla t}$  the inclusion of the direct sum of pure irreducible components in $C$
$$\oplus_g P_g \to C$$
is pure.\end{it}
\end{theorem}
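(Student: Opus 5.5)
The plan is to apply the purity criterion \ref{pure mod p} directly to the map $\iota\colon\oplus_g P_g \to C$ induced by the inclusions of the pure irreducible components, where $g$ runs over $Gr(C)$. For \ref{pure mod p} we need two inputs: that both source and target are flat, and that $\iota$ is injective and remains injective after reduction mod $p$ for every prime $p$ of $R$.

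\textbf{Flatness.} The target $C$ is flat by assumption. By \ref{irreducible pure}, each $P_g$ is a pure submodule of the flat module $C$. Over a principal ideal domain, flatness means torsion-freeness, and submodules of torsion-free modules are torsion-free, so each $P_g$ is flat. A direct sum of flat modules is flat, so $\oplus_g P_g$ is flat.

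\textbf{Injectivity of $\iota$ itself.} I would deduce this from the mod $p$ statement rather than from \ref{irreducible direct}, since pairwise trivial intersections alone do not give independence of a sum. Suppose $x\in\ker\iota$ is nonzero. Since $\oplus_g P_g$ is torsion-free, the element $x$ is not divisible by every power of a fixed prime $p$, assuming $R$ is not a field. Indeed, only finitely many summands are involved, each finitely generated piece is free, and in a free module a nonzero element has bounded $p$-divisibility. So we can write $x=p^k x'$ with $x'\notin p\,(\oplus_g P_g)$. Then $p^k\iota(x')=0$, and since $C$ is torsion-free, $\iota(x')=0$. Hence the class of $x'$ is a nonzero element of the kernel of $\iota\otimes R/pR$. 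If $R$ is a field, $C$ is a coalgebra over a field and the claim is exactly the classical theorem cited in \ref{st}.

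\textbf{Main step.} Injectivity of $\iota\otimes R/pR$ for every prime $p$ is precisely the content of the preceding lemma \ref{st}, so it may be quoted. The substance of that lemma, which I regard as the real obstacle already handled earlier, has three parts:
\begin{enumerate}
\item Each $P_g\otimes R/pR$ lands in the irreducible component $U_{\bar g}$ of $C\otimes R/pR$. This uses the first lemma \ref{st}, that $\{g\}=Gr(P_g)=Gr(P_g\otimes R/pR)$, so that $P_g\otimes R/pR$ is irreducible with the single grouplike $\bar g$.
\item Distinct $g$ give distinct $\bar g$, by \ref{inonGr} and \ref{group-like is direct}.
\item Over the field $R/pR$, the coalgebra $C\otimes R/pR$ is the direct sum of its irreducible components.
\end{enumerate}
One also needs each $P_g\otimes R/pR\to C\otimes R/pR$ to be injective, which holds because $P_g$ is pure in $C$. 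Together these give injectivity mod $p$.

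\textbf{Conclusion.} By \ref{pure mod p}, $\iota$ is a pure injection, which is the assertion of the theorem.
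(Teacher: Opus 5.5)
Apart from one paragraph, your proof is the paper's proof. The paper obtains the theorem in one line by combining the criterion \ref{pure mod p} with the mod $p$ injectivity lemma immediately preceding it, and you do the same. Your flatness check is also fine.

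\textbf{The gap: your injectivity paragraph.} You claim that a nonzero $x$ in the torsion-free module $\oplus_g P_g$ is not divisible by all powers of $p$. This is false. Divisibility is measured in the whole module, not in a finitely generated submodule containing $x$: the elements $y_k$ with $x=p^k y_k$ need not lie in any fixed finitely generated piece. Torsion-free modules such as $\mathbb{Q}$ over $\mathbb{Z}$ contain infinitely $p$-divisible elements, and this happens for flat pointed coalgebras too. For example, $\mathbb{Z}g\oplus\mathbb{Q}x$, with $g$ grouplike and $x$ primitive, is a flat, pointed, pure irreducible $\mathbb{Z}$-coalgebra in which $x$ is divisible by every integer.

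More fundamentally, injectivity of $\iota$ cannot be extracted from the mod $p$ statements at all. Since $C$ is torsion-free, $N=\ker\iota$ is pure in $\oplus_g P_g$. Hence $N/pN$ injects into $\ker(\iota\otimes R/pR)$, and mod $p$ injectivity tells you only that $N=pN$ for every $p$, i.e.\ that $N$ is a $K$-vector space. A nonzero divisible kernel is invisible modulo every prime, as the projection $K\oplus R\to R$ shows.

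\textbf{The fix: use a rational input.} Since $\oplus_g P_g$ is torsion-free, it embeds in $\oplus_g (P_g\otimes K)$, so it suffices to show that $\iota\otimes K$ is injective. By \ref{Kisok}, $P_g\otimes K\subset C\otimes K$ is identified with the irreducible component $U_g$ of $C\otimes K$. The irreducible components of a coalgebra over the field $K$ form a direct sum; this is the classical result the paper invokes in the proof of \ref{irreducible generates}.

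You are right that the pairwise statement \ref{irreducible direct} does not make the sum direct. The paper's one-line proof, which simply calls $\oplus_g P_g\to C$ an inclusion, does not address this. With the rational argument in place of your divisibility paragraph, your proof is complete.
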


 How the functor $Gr$ behaves if one passes to coalgebras over $K$ is answered in:
 
\begin{lemma}\label{Gr = GR_k}\begin{it}
		Let $C\in  \mathbf{C}_*^{flat}$. There is a bijection $$Gr(C)\cong Gr(C\otimes K)$$ induced by $i:C\to C\otimes K$.
\end{it}\end{lemma}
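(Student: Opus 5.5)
The plan is to show that $Gr(i)\colon Gr(C)\to Gr(C\otimes K)$ is injective, and then that it is surjective. The first half is routine, the second carries the content.

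\emph{Injectivity.} $C$ is flat over the domain $R$, hence torsion free, so $i\colon C\to C\otimes K$ is injective. Group-like elements go to group-like elements, because $i$ is a coalgebra map: $(i\otimes i)(x\otimes x)$ is the comultiplication of $i(x)$, and the counit is preserved. So $Gr(i)$ is injective.

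\emph{Surjectivity.} Let $h\in C\otimes K$ be group-like.
\begin{enumerate}
\item Clear denominators. Choose $0\neq s\in R$ with $sh=i(x)$ for some $x\in C$. The $K$-line $Kh$ is a subcoalgebra of $C\otimes K$. Its preimage $D=i^{-1}(Kh)$ is the purification $\widetilde{Rx}$ of $Rx$ in $C$, a pure submodule of rank $1$. Since finitely generated submodules of flat modules over a PID are free and $D$ has rank $1$, $D$ is torsion free of rank $1$, isomorphic to a submodule of $K$.
\item Show $D$ is a subcoalgebra. Compute
\[
\Delta(x)=s^{-1}\,x\otimes x \quad\text{in } (C\otimes C)\otimes K .
\]
So $\Delta(x)\in \widetilde{Rx\otimes Rx}=D\otimes D$, using that $C\otimes C$ is flat and that purification commutes with tensor products as in Lemma~\ref{basic} (\cite[2.15]{St}). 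Hence $D$ is a pure subcoalgebra of $C$.
\item Show $D$ is pure simple. It has rank $1$, so any nonzero pure subcoalgebra of $D$ has rank $1$ and, being pure, equals $D$.
\item Apply pointedness. Since $C$ is pointed, $D\cong R$ as a coalgebra. By Lemma~\ref{g generates}, $D=Rg$ for a unique $g\in Gr(C)$.
\item Identify $h$. Now $Dh$ and $i(D)=Ri(g)$ span the same $K$-line. The line $Kh$ has exactly one group-like element: if $\lambda h$ is group-like then $\epsilon(\lambda h)=1$ forces $\lambda=1$. Therefore $i(g)=h$.
\end{enumerate}

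The main obstacle is the subcoalgebra claim for $D$, that is, verifying $\Delta(D)\subset D\otimes D$. It needs the identification $i^{-1}(Kh)\otimes i^{-1}(Kh)=\bigl(i\otimes i\bigr)^{-1}\bigl(Kh\otimes Kh\bigr)$ inside $C\otimes C$. I would get this from the fact that pure submodules of a flat module over a PID are exactly the preimages of $K$-subspaces, combined with \cite[2.15]{St}. The remaining steps are then formal.
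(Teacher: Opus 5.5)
Your argument is correct, but it differs from the paper's at the decisive step. Both proofs clear denominators, pass to the purification $D=\widetilde{Rx}$, and rely on the fact that $x\otimes x$ spans a rank-one pure submodule of $C\otimes C$.

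The paper never checks that $D$ is a subcoalgebra and never uses pointedness. It observes that the counit is a nonzero map from the rank-one module $D$ to $R$, hence injective, so $D$ is free. Taking $x$ to be a generator, purity of $R(x\otimes x)$ in $C\otimes C$ gives $\Delta(x)=\gamma\,(x\otimes x)$ with $\gamma\in R$. Applying the counit then shows that $\gamma$ is the inverse of the scalar relating $x$ and $h$, so $h\in i(C)$.

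Your route is more structural: it exhibits $D$ as a pure simple subcoalgebra, in line with the correspondence between group-likes and free rank-one subcoalgebras. But it spends the pointedness hypothesis where it is not needed. In your step 4 you can replace pointedness by the same counit observation. If $i(y)=ch$ then $\epsilon(y)=c$, so $\epsilon$ is injective on $D$. Hence $D$ is isomorphic to a nonzero ideal of $R$ and is free of rank one, and Lemma~\ref{g generates} supplies the group-like generator. With that change your proof, like the paper's, gives the bijection $Gr(C)\cong Gr(C\otimes K)$ for every flat coalgebra over $R$, not only pointed ones.
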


\begin{proof} Note first that a group-like element of $C$ gives rise to a unique group-like element in $C\otimes K$.\\
	Now let $g\in Gr(C\otimes K)$ be given. Then there is a non trivial $\lambda \in R$ with $$x:=\lambda g\in C.$$ We may assume that the subgroup $\langle x\rangle$ generated by $x$ is pure in $C$. Oterwise we pass to the purification   $\widetilde{\langle x\rangle}$. The counit $\epsilon$ is a non trivial homomorphism from $\langle x\rangle$ to $R$. The extension to $\widetilde{\langle x\rangle}$ is non trivial either. 
	
	  Hence, the rank 1  module $\widetilde{\langle x\rangle}$ is isomomorphic to $R$. 
	So we may replace $x$ by a generator.\\
	 Moreover, we may assume that $\lambda $ is not invertibel in $R$. Otherwise we are done.\\
	We have
	$$(i\otimes i)\Delta_C (x )=\Delta_{C\otimes K} (i(x))=\Delta_{C\otimes K} (\lambda g)=\lambda (g\otimes g)$$
	and
	$$(i\otimes i)(x\otimes x)=\lambda^2  (g\otimes g)$$
	
	Since $i\otimes i$ is injective we  find that
	
$$\lambda \Delta_C (x)=  x\otimes x$$
	
The submodule generated by $x\otimes x$ is pure in $C\otimes C$.

Hence	there is $\gamma \in R$ with 
	$$\Delta_C (x)=\gamma(x\otimes x) $$

	But then we have
	$$\gamma \lambda^2 (g\otimes g)=\lambda (g\otimes g) $$

	Now	apply $\epsilon \otimes 1$ and then $\epsilon$ to find
			 $$\gamma \lambda^2 =  \lambda$$
	
Since $R$ is a principal ideal domain,	it follows that $$\gamma =\lambda^{-1} .$$   Since $\lambda$ was   assumed non invertible in $R$  we arrive at a contradiction.
\end{proof}
	
	

\begin{lemma}\begin{it}\label{1234} Let $C=\sum_{i}C_i $ be a sum of pure  pointed coalgebras and $S$ a  simple pure subcoalgebra. Then $S\subset C_i$ for some $i$.\end{it}
	\end{lemma}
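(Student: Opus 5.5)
Since the $C_i$ are pointed, the simple pure subcoalgebra $S$ should be of rank one, $S=Rg$ for a group-like element $g$. The proof then reduces to showing that every group-like element of $C$ already lies in one of the summands $C_i$. We regard $C=\sum_i C_i$ as a subcoalgebra of an ambient flat coalgebra in which the $C_i$ are pure subcoalgebras. Then $C$ is the image of the coalgebra map $\bigoplus_i C_i\to C$, and it is flat, being a submodule of a flat module over a principal ideal domain.

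\textbf{Step 1: $S$ has rank one.} Take the base change to the quotient field $K$. The coalgebra $C\otimes K=\sum_i C_i\otimes K$ is a sum of subcoalgebras over a field. By \ref{Gr = GR_k}, the group-like elements of each $C_i\otimes K$ come from $C_i$. Moreover, each $C_i\otimes K$ is pointed over $K$: a simple subcoalgebra of $C_i\otimes K$, intersected with $C_i$, has a pure simple subcoalgebra by \ref{irreducible rules}(2), and this pure simple subcoalgebra is $\cong R$.

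The classical fact over a field is that a simple subcoalgebra of a sum of subcoalgebras lies in one of them. This follows from $\operatorname{corad}(\sum_i D_i)\subseteq \sum_i \operatorname{corad}(D_i)$, which holds because a sum of subcoalgebras is a quotient of their direct sum, and coradicals map onto coradicals under surjections (Sweedler). Hence $C\otimes K$ is pointed, with group-likes in $\bigcup_i Gr(C_i\otimes K)$.

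Now $S\otimes K$ is a subcoalgebra of $C\otimes K$, so it contains a simple subcoalgebra $Kg$ with $g\in Gr(C_i\otimes K)=Gr(C_i)$ for some $i$. The purification of $Rg$ in $S$ is a pure subcoalgebra of $S$ by \ref{basic}, and it is nonzero. Since $S$ is pure simple, it equals $S$. Since $Rg$ is pure in $C_i$ by \ref{group-like is direct}, and hence pure in $C$, we get $S=Rg$.

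\textbf{Step 2: conclusion.} We have $g\in C_i$, so $S=Rg\subset C_i$.

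\textbf{Main obstacle.} The delicate point is the purity bookkeeping in Step 1. We need $Rg$ pure inside $S$, and purity of $Rg$ in $C_i$ gives only purity relative to $C_i$, not directly relative to $C$. The first thing to try is to use \ref{pure mod p}: $Rg$ is pure in the ambient coalgebra provided its reduction mod $p$ is injective, and that is automatic because $\epsilon(g)=1$. With $Rg$ pure in the ambient coalgebra, it is pure in every intermediate submodule, in particular in $S$. This gives $S=Rg$ without appealing to the purity of $C_i$ in $C$.
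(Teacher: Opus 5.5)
Your proof is correct and follows essentially the same route as the paper. Both arguments pass to $C\otimes K$, apply Sweedler's field-level fact that a simple subcoalgebra of a sum of subcoalgebras lies in one summand, and pull the resulting group-like element back into some $C_i$ via \ref{Gr = GR_k}. The differences are that you prove $S$ has rank one rather than assuming it as the paper does (you use pointedness of each $C_i\otimes K$, pure simplicity of $S$, and purity of $Rg$ split off by $\epsilon$), and you avoid the paper's reduction to two summands.
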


\begin{proof}
Let $S$ be a  pure simple subcoalgebra of $C$. Then $S$ is a retract of $C$ via $\epsilon $. We may assume that $S\subset C_1 + C_2$ since $S$ is of rank one.
Suppose $S\not \subset C_1 $ then $S\cap C_1 = 0$ since $S$ is pure simple. We must show that $S\subset C_2$.\\

Now let $g\in S$ be group-like.
Since by \ref{Gr = GR_k}  $$Gr(C)=Gr(C\otimes K)$$

we find $$g\in Gr(C_2 \otimes K)=Gr(C_2)$$ by \cite[8.0.3.]{S}.






\end{proof}

We need one more preliminary result.

\begin{proposition} \label{Kisok}\begin{it}
Let $C\in  \mathbf{C}_*^{flat}$ and write $\{P_g\}  $, $\{U_g\}$ for the pure irreducible components of $C$ and $C\otimes K$. There is an isomorphism for each $g\in G(C)=G(C\otimes K)$

$$(P_g)\otimes K \cong   U_g .$$
\end{it}
\end{proposition}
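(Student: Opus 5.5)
We argue through the decomposition results already in hand. By \ref{purecomp}, the inclusion $\oplus_g P_g \to C$ is pure. Tensoring with the flat module $K$ preserves injectivity, so we obtain an injection of $K$-coalgebras
$$\oplus_{g} (P_g\otimes K)\to C\otimes K .$$
Over the field $K$, the classical structure theorem \cite[theorem 8.5.c]{S} gives $C\otimes K\cong \oplus_{g} U_g$. By \ref{Gr = GR_k}, both sums are indexed by the same set $Gr(C)=Gr(C\otimes K)$. The plan is to show that $P_g\otimes K\subset U_g$ for each $g$ and that these inclusions are onto.

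\textbf{Step 1: $P_g\otimes K\subset U_g$.} $P_g$ is pure irreducible with unique pure simple subcoalgebra $Rg$ (\ref{irreducible rules}(3)). We claim $P_g\otimes K$ is an irreducible $K$-coalgebra. A simple subcoalgebra $S$ of $P_g\otimes K$ has intersection $S\cap P_g$, which is pure in $P_g$ and is a subcoalgebra by the flat argument behind \ref{basic}/\ref{infcap}. Since $P_g$ is pointed, $S\cap P_g$ contains some $Rh$; irreducibility forces $h=g$, so $S=Kg$. Hence $P_g\otimes K$ is irreducible and contains $g$, and therefore lies in the irreducible component $U_g$.

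\textbf{Step 2: $U_g\subset P_g\otimes K$.} Set $D=U_g\cap C$, the preimage of $U_g$ under $C\to C\otimes K$. Since $U_g$ is a $K$-subspace, $D$ is pure in $C$ (it is RD-closed). We check that $D$ is a subcoalgebra. Its coproduct lies in $(C\otimes C)\cap(U_g\otimes U_g)$, and by flatness and \ref{45} we have
$$(C\otimes C)\cap (U_g\otimes U_g)=(C\cap U_g)\otimes(C\cap U_g)=D\otimes D.$$
Here we view everything inside $C\otimes K\otimes C\otimes K$ and use that $D\otimes K=U_g$ because $U_g$ is spanned by multiples of elements of $C$. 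We then show $D$ is pure irreducible. Any pure simple subcoalgebra of $D$ is some $Rh$ with $h\in Gr(C)$ and $Kh\subset U_g$, which forces $h=g$; then \ref{irreducible rules}(3) applies. Maximality of $P_g$ gives $D\subset P_g$, hence $U_g=D\otimes K\subset P_g\otimes K$.

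The main obstacle is Step 2, specifically verifying that $\Delta(D)\subset D\otimes D$. This requires the identification $(C\otimes C)\cap (U_g\otimes U_g)=D\otimes D$ inside $(C\otimes C)\otimes K$. I would prove it by choosing a $K$-complement $W$ of $U_g$ spanned by $\oplus_{h\neq g}U_h$, then using the purity of $D$ in $C$ together with the fact that $C/D$ is flat, so that $C\otimes C\to (C/D)\otimes C\oplus C\otimes (C/D)$ detects membership in $D\otimes D$. Combining Steps 1 and 2 gives the isomorphism $(P_g)\otimes K\cong U_g$.
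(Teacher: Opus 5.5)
Your argument is correct, but it takes a different route from the paper. The paper disposes of the statement in two lines using the pure coradical filtration. By \ref{4}, the stages satisfy $(C\otimes K)_i\cong C_i\otimes K$ and exhaust the coalgebras involved. So $P_g$ and $U_g$ are unions of corresponding stages, and the claim follows because $-\otimes K$ commutes with filtered colimits. You avoid the filtration entirely. Instead you work with the correspondence $D\mapsto D\otimes K$, $V\mapsto V\cap C$ between pure submodules of $C$ and $K$-subspaces of $C\otimes K$. You show that it preserves subcoalgebras and matches the unique pure simple subcoalgebra $Rg$ with $Kg$, and then you conclude by maximality.

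Your route has several advantages:
\begin{itemize}
\item It is self-contained: it needs only the RD description of purity and flatness, not the forward reference to Section 8.
\item It makes explicit how one passes from all of $C$ to the single component through $g$, which the paper leaves implicit.
\item It gives by-products. One is $P_g=U_g\cap C$, so the pure irreducible component through $g$ is unique; this fact is tacitly used in \ref{irreducible direct}. Another is that your Step 1 argument, applied to $C$ itself, shows $C\otimes K$ is pointed.
\end{itemize}
The filtration route, once written out, would instead give the finer statement that the identification is compatible with the coradical filtrations stage by stage.

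Three details to tighten:
\begin{itemize}
\item \ref{45} does not literally apply in Step 2, because $C$ is not pure in $C\otimes K$. The justification you sketch at the end is the right one: $D\otimes D$ is pure in $C\otimes C$ since $C/D$ is flat, hence $(C\otimes C)\cap(D\otimes D\otimes K)=D\otimes D$. The complement $W$ is unnecessary.
\item A component is only defined as a maximal pure irreducible subcoalgebra, so maximality alone does not give $D\subseteq P_g$. You should record $P_g\subseteq (P_g\otimes K)\cap C\subseteq U_g\cap C=D$ from Step 1; maximality then forces $P_g=D$.
\item The opening paragraph invoking \ref{purecomp} and the decomposition of $C\otimes K$ is never used afterwards and can be dropped.
\end{itemize}
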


The proof needs some results on the coradical filtration given in section 8. below.

 \begin{proof}  The modules $P_g$ and $U_g$ are the filtered colimits of $C_i$ and $C_i \otimes K$ by \ref{4}. The assertion follows since $-\otimes K$ commutes with  filtered colimits.

\end{proof}
   
\begin{theorem}\label{irreducible generates}\begin{it} Any $C\in \mathbf{C}_*^{flat}$ is the direct sum of its irreducible components.\end{it}\end{theorem}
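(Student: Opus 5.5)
The plan is to show that the canonical map
$$\phi\colon \bigoplus_{g\in Gr(C)} P_g \to C$$
is an isomorphism. By Theorem \ref{purecomp} it is already injective with pure image, so its cokernel $Q$ is flat. A flat module over a principal ideal domain is torsion free. It therefore suffices to prove $Q\otimes K=0$, which is the same as showing that $\phi\otimes K$ is surjective (and then bijective, since $K$ is flat over $R$).

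After tensoring with $K$ the map becomes
$$\bigoplus_{g\in Gr(C)} P_g\otimes K \to C\otimes K .$$
First, Lemma \ref{Gr = GR_k} identifies the index set with $Gr(C\otimes K)$. Next, Proposition \ref{Kisok} identifies each $P_g\otimes K$ with the irreducible component $U_g$ of the $K$-coalgebra $C\otimes K$. One has to check that this identification is compatible with the inclusions into $C\otimes K$, i.e. that the image of $P_g\otimes K$ is exactly $U_g$. This follows because the isomorphism in \ref{Kisok} is induced by the inclusion, using the coradical filtration description. Finally, $C\otimes K$ is pointed over $K$: its simple subcoalgebras are spanned by group-like elements. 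To see this, pass to finite-dimensional subcoalgebras and use that a simple $K$-subcoalgebra $S$ of $C\otimes K$ pulls back to the pure subcoalgebra $S\cap C$ of $C$ with $(S\cap C)\otimes K=S$. That subcoalgebra contains a pure simple one, which is $\cong R$ because $C$ is pointed, hence $S$ is one-dimensional. The classical theorem for pointed cocommutative coalgebras over a field \cite[theorem 8.5.c]{S} then says $C\otimes K=\bigoplus_g U_g$. So $\phi\otimes K$ is an isomorphism, $Q\otimes K=0$, and the torsion-free module $Q$ vanishes.

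The main obstacle is this last step, pointedness of $C\otimes K$ together with the compatibility of $P_g\otimes K\cong U_g$ as submodules. Some care is needed that $S\cap C$ is a subcoalgebra: it is the pure submodule $C\cap S$, and its comultiplication lands in $(C\otimes C)\cap(S\otimes S)$, which equals $(S\cap C)\otimes(S\cap C)$ by purity arguments as in Lemma \ref{45}. Should this route be awkward, a fallback is to argue modulo every prime $p$. Using Lemma \ref{st}, the reduction $\bigoplus_g P_g\otimes R/pR\to \bigoplus_{\bar g}U_{\bar g}=C\otimes R/pR$ would be surjective if each $U_{\bar g}$ is hit. Since the map is also injective by Lemma \ref{st}, $Q\otimes R/pR=0$ for every prime $p$, and this too gives $Q=0$ for the flat module $Q$.
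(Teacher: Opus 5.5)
\paragraph{Verdict.} Your argument follows the paper's own proof almost step for step, but the step carrying all the weight fails, and in fact the statement is false. The shared structure is: purity of $\phi\colon\bigoplus_g P_g\to C$ from Theorem~\ref{purecomp}, flatness of the cokernel $Q$, and a comparison after $-\otimes K$ with the classical decomposition of $C\otimes K$.

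\paragraph{What holds.} Your rational half is sound and more careful than the paper's. Your pull-back device ($S\cap C$ is a pure subcoalgebra of $C$ with $(S\cap C)\otimes K=S$) correctly shows that $C\otimes K$ is pointed. The same device gives $P_g=U_g\cap C$, hence $P_g\otimes K=U_g$ inside $C\otimes K$, which is what you wanted from Proposition~\ref{Kisok}. So you do prove that $\phi$ is injective with torsion cokernel.

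\paragraph{Counterexample.} The purity claim cannot be repaired. Take $R=\mathbb{Z}$, $p$ a prime, and $C=(\mathbb{Z}[x]/(x^2-px))^{\vee}$, free on $u,v$ dual to $1,x$. Then $\Delta u=u\otimes u$, $\Delta v=u\otimes v+v\otimes u+p\,v\otimes v$, $\epsilon(u)=1$ and $\epsilon(v)=0$. The group-like elements are $u$ and $u+pv$.

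A pure rank-one subcoalgebra is $\mathbb{Z}(au+bv)$ with $\gcd(a,b)=1$ and $\Delta(au+bv)=\lambda(au+bv)\otimes(au+bv)$. Comparing coefficients gives $a=\lambda a^2$, $b=\lambda ab$ and $pb=\lambda b^2$, which force $a=\lambda=\pm1$ and $b\in\{0,pa\}$. Hence the only pure subcoalgebras are $0$, $\mathbb{Z}u$, $\mathbb{Z}(u+pv)$ and $C$.

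Both lines are isomorphic to $R$, so $C$ is pointed. $C$ itself is not pure irreducible, so its pure irreducible components are the two lines. Their sum $\mathbb{Z}u\oplus p\mathbb{Z}v$ has index $p$ in $C$. Exactly as your argument predicts, $\phi\otimes\mathbb{Q}$ is an isomorphism, but $Q\cong\mathbb{Z}/p$ is not flat.

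\paragraph{Where the paper's chain breaks.} The error sits upstream, in Lemma~\ref{inonGr}. Linear independence of the group-likes in $C$ and in $C/pC$ separately does not make $Gr(C)\to Gr(C/pC)$ injective; here $u\equiv u+pv \bmod p$. This invalidates three results:
\begin{enumerate}
\item the purity half of Theorem~\ref{group-like is direct};
\item the lemma asserting that $\bigoplus_g P_g\to C$ is injective modulo $p$;
\item Theorem~\ref{purecomp}.
\end{enumerate}

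\paragraph{Your fallback.} It breaks at the same point. Modulo $p$ both components land in $\mathbb{F}_p\bar u$, so the reduced map is neither injective nor surjective. Independently of this, flatness of $Q$ together with $Q/pQ=0$ for all $p$ only makes $Q$ a $K$-vector space (e.g.\ $\mathbb{Q}$ over $\mathbb{Z}$). You would still need $Q\otimes K=0$.

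\paragraph{What is missing.} The whole content of the theorem beyond the rational comparison is the purity of $\bigoplus_g P_g$ in $C$. That needs a hypothesis not present in the definition of pointedness, for instance one preventing distinct group-likes from becoming congruent modulo a prime.
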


\begin{proof} In view of \ref{irreducible direct} and the fact that coalgebras of finite rank generate the category under filtered colimits, it is enough to show that a finite rank coalgebra in $ \mathbf{C}_*^{flat}$ is the sum of its irreducible subcoalgebras.\\
The pure simple subcoagebras of $C$ and  $C\otimes K$ are both in one one correspondence to the group-like elements of $C$ and  $C\otimes K$ which are in bijection to each other by \ref{Kisok}.\\
Let  $C_g$ be a pure irreducible component of $C$ with unique simple subcoalgebra $R_g$.
Then $C_g\otimes K$ has   simple subcoalgebra $K_g$. It is unique since it corresponds to the unique group-like element in  $g\in C_g\otimes K$.
Let $D$ be the sum of the pure irreducible components in $C$.  This sum is direct \ref{irreducible  direct} and finite. It is pure in   
 $C$ by  \ref{purecomp}. So $$D=\oplus_{g\in G(C)}D_g$$
 and $$D\otimes K = \oplus_{g\in G(C\otimes K)}(D\otimes K)_g$$
 The later is isomorphic to the sum of the irreducible components of $C\otimes K$ by \cite[theorem 8.0.5. c]{S}. If there were a non-trivial cokernel of the pure inclusion $$D\to C$$ it would be flat. But then the induced cokernel for $$D\otimes K\to C\otimes K$$ would be non-trivial as well. Since $D\otimes K$ is the direct sum of the irreducible components of $C\otimes K$ which equals $C\otimes K$, we are done.

\end{proof}

\begin{remark}
For $C\in \mathbf{C}_*^{flat}$ consider the partially ordered set of pure subcoalgebras $\pi(C)$. A path beween $S,T\in \pi(C)$ is a string of  morphisms $$S\to S_1 \leftarrow S_2 \to \ldots  \leftarrow T$$ with $S_i \in \pi (C)$. This defines a equivalence relation. The equivalence classes for this relation are called the path components of $C$. \ref{irreducible generates}  can be interpreted to say that the path components of $C$ are in one one correspondence with the irreducible components.
\end{remark}

\section{\large\textbf{Splitting off the pure coradical}}

\begin{lemma}\label{image is irreducible}\begin{it} Let $C\in \mathbf{C}_*^{flat}$ be pure irreducible with unique simple pure subcoalgebra $F$.
		Let $$f:C\to D$$ be a surjective morphism in $ \mathbf{C}_*^{flat} $. 
		\begin{enumerate}
			\item If $S$ is a non-zero pure simple subcoalgebra of $D$ then $f(R)=S $.
			\item $D$ is pure irreducible.
			
		\end{enumerate}\label{key}
		
\end{it}\end{lemma}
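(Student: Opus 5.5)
The plan is to reduce both assertions to statements about group-like elements, via the correspondence between pure simple subcoalgebras of a pointed coalgebra and group-like elements (Theorem \ref{group-like is direct} and the remark $C_0 = RGr(C)$ for $C\in\mathbf{C}_*^{flat}$). Since $C$ is pure irreducible and pointed, Lemma \ref{irreducible rules}(3) shows that $F=R_g$ for a unique group-like $g$, and $Gr(C)=\{g\}$. In (1) the symbol $f(R)$ is to be read as $f(F)=f(R_g)=R_{f(g)}$. The element $f(g)$ is group-like because $f$ is a coalgebra map, so $R_{f(g)}$ is a pure simple subcoalgebra of $D$, pure by Theorem \ref{group-like is direct}.

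\textbf{Step (1).} Let $S\subset D$ be pure simple. Since $D$ is pointed, $S=R_h$ for some $h\in Gr(D)$, and we must show $h=f(g)$. First pass to $K$. The map $f\otimes K\colon C\otimes K\to D\otimes K$ is surjective. By Proposition \ref{Kisok}, $C\otimes K$ is the irreducible $K$-coalgebra $U_g$ with unique group-like $g$. By Lemma \ref{Gr = GR_k}, $h$ stays a group-like element of $D\otimes K$. Over a field, the classical result applies (Sweedler: the image of an irreducible coalgebra under a coalgebra map is irreducible, e.g.\ via the coradical of the image lying in the image of the coradical \cite{S}). It gives that $D\otimes K$ is irreducible with unique group-like $f(g)$. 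Hence $h=f(g)$ in $D\otimes K$, and since $D\to D\otimes K$ is injective ($D$ is flat), $h=f(g)$ in $D$. So $S=R_{f(g)}=f(F)$.

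\textbf{Step (2).} By (1), every pure simple subcoalgebra of $D$ equals $f(F)$. This is non-zero, because $\epsilon_D(f(g))=\epsilon_C(g)=1$. Lemma \ref{irreducible rules}(2) then shows that every nonzero pure subcoalgebra of $D$ contains a pure simple one. So $D$ has a unique pure simple subcoalgebra, and Lemma \ref{irreducible rules}(3) gives that $D$ is pure irreducible.

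The main obstacle is the field-level input in Step (1): that a surjective coalgebra map preserves irreducibility over $K$. I would cite \cite{S}. Alternatively, one can argue directly: by Theorem \ref{colimits} reduce to finite dimension, dualize to an injective algebra map $(D\otimes K)^*\to (C\otimes K)^*$ into a local algebra, and note that a subalgebra of a finite-dimensional local algebra is local. One must also check that Lemma \ref{Gr = GR_k} and Proposition \ref{Kisok} apply, which needs $C$ and $D$ pointed; this is given by the hypothesis.
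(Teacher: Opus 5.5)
Your proposal is correct and follows essentially the paper's route. Both pass to $K$, invoke Sweedler's result that the surjective image of an irreducible coalgebra is irreducible, and then transport the unique group-like back to $D$: the paper via the bijection $Gr(D)\cong Gr(D\otimes K)$ of Lemma \ref{Gr = GR_k}, you via injectivity of $D\to D\otimes K$. Your appeal to Proposition \ref{Kisok} to see that $C\otimes K$ is irreducible, and your derivation of (2) from Lemma \ref{irreducible rules}(3), make explicit steps the paper leaves implicit.
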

\begin{proof} 
	
	Pick a group-like  generator  $x \in F$ and let  $$f(x)=y.$$
	Note that $y$ is group-like.
	
	By \ref{Gr = GR_k} we know that 
	$$\{x\}=Gr(C)=Gr(C\otimes K) .$$
	
	Consider the surjective map
	$$f\otimes K:C\otimes K\to D\otimes K. $$
	
	and apply \cite[8.0.6.]{S} to find
	$$\{ y\otimes 1\}=Gr(D\otimes K).  $$
	By \ref{Gr = GR_k} there is a bijection
	$$Gr(D)\cong Gr(D\otimes K) $$ under which the element $y$
	corresponds to $y\otimes 1$. This proves (1).

	The assertion in (2) is now clear.  
	\\

	

\end{proof} 

	
	

\begin{lemma}\label{deco1}\begin{it}
	Let $f:C\to D$ be a morphism in $ \mathbf{C}_*^{flat}$  and $F\subset C$ a pure simple subcoalgebra. Then $f$ restricts to a morphism $$f|_{C(F)}\to D(f(F)))$$.\end{it}\end{lemma}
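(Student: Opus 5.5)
The plan is to work with the image of the component under $f$, to establish three properties of that image (subcoalgebra, purity, irreducibility), and then to use maximality of the component $D(f(F))$ to get the containment.

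Set $C(F)=P_g$, where $g$ is the group-like generator of $F$. Put $E:=f(C(F))\subset D$.

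First, $E$ is a subcoalgebra of $D$, since $f$ is a coalgebra map. Also $f(F)$ is a subcoalgebra of $E$ spanned by the group-like element $f(g)$. It is a free rank one $R$-module, because $\epsilon(f(g))=1$, so $f(g)\neq 0$ and $R$ is a domain. The corollary after \ref{g generates} shows it is a retract of $D$ via $\epsilon$, hence pure. Since it has rank one it is pure simple, so $D(f(F))$ makes sense.

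Next I would pass to the purification $\widetilde{E}$ of $E$ in $D$. By \ref{basic} it is a subcoalgebra, and by \ref{important} it lies in $\mathbf{C}_*^{flat}$. The key claim is that $\widetilde{E}$ is pure irreducible. Applying \ref{image is irreducible} to the surjection $C(F)\to E$ shows that $E$ has the single group-like element $f(g)$. This needs $E$ to be flat, which holds because $E$ is a submodule of the flat module $D$ over a principal ideal domain.

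To transfer this to $\widetilde{E}$, note that $E\otimes K=\widetilde{E}\otimes K$, since $\widetilde{E}/E$ is torsion (it is the relative divisibility hull). By \ref{Gr = GR_k}, $Gr(\widetilde{E})\cong Gr(\widetilde{E}\otimes K)=Gr(E\otimes K)\cong Gr(E)=\{f(g)\}$. In the pointed flat setting the pure simple subcoalgebras correspond to group-likes. So $\widetilde{E}$ has the unique pure simple subcoalgebra $f(F)$, and by \ref{irreducible rules}(3) it is pure irreducible.

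Finally, a pure irreducible subcoalgebra containing $f(F)$ lies in a maximal one. I would argue this by Zorn's lemma: a union of a chain of pure irreducible subcoalgebras is again pure and irreducible, since by \ref{irreducible rules}(2) any pure subcoalgebra contains a pure simple one, which must be $f(F)$. That maximal one is the pure irreducible component $D(f(F))$, which is unique by \ref{irreducible direct}. Hence $f(C(F))\subset\widetilde{E}\subset D(f(F))$, and $f$ restricts as claimed.

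The main obstacle is the irreducibility of $\widetilde{E}$, in particular checking that \ref{image is irreducible} applies to a possibly impure image. The fallback is to go through $K$ entirely: use \ref{Kisok} and the classical fact \cite[8.0.6]{S} that over the field $K$ the map $f\otimes K$ sends $U_g$ into the component $U_{f(g)}$. Then any $x\in C(F)$ has $f(x)\otimes 1\in P_{f(g)}\otimes K$, and since $P_{f(g)}$ is pure in $D$ (\ref{irreducible pure}), $f(x)\in P_{f(g)}$.
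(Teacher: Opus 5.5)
Your proposal is correct and is essentially the paper's argument: apply \ref{image is irreducible} to the surjection $C(F)\to f(C(F))$ and conclude that the image lies in the component through $f(F)$. Your purification and Zorn's-lemma steps make explicit the containment $f(C(F))\subset D(f(F))$ that the paper asserts without comment; the one hypothesis you should still record is that $f(C(F))$ is pointed, so that \ref{image is irreducible} applies, and this follows from \ref{777}.
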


\begin{proof} Let $G=f(C(F))\subset D$. Then by \ref{image is irreducible}  $G$ is pure irreducible with pure simple subcoalgebra $f(F)$. Suppose $C(f(F))$ is the irreducible component of $D$ containing $f(F)$. We find $G\subset C(f(F))$. The assertion follows. 
	\end{proof}

\begin{lemma}\label{deco2}\begin{it} Any morphism $f:C\to D$  in $ \mathbf{C}_*^{flat}$ decomposes in the form:\end{it}
	
\xymatrix{C \ar[d]^{\cong}\ar[rr]^f{} &&
	D\ar[d]^{\cong} && 
	\\\oplus f_F \oplus_{F\in SP(C))} C(F) 
	\ar[rr]^{\oplus f_F} &&    \oplus_{K\in SP(D)} D(K) 
}	
	
	\end{lemma}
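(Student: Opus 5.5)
The plan is to assemble the decomposition from Theorem \ref{irreducible generates} and Lemma \ref{deco1}. First, Theorem \ref{irreducible generates} gives the vertical isomorphisms: $C\cong\oplus_{F\in SP(C)}C(F)$ and $D\cong\oplus_{K\in SP(D)}D(K)$. These are the sum maps induced by the inclusions of the pure irreducible components. Since $C$ and $D$ are pointed, the pure simple subcoalgebras are exactly the $R_g$ for $g\in Gr(C)$, respectively $Gr(D)$, so the components are indexed by group-like elements.

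Next, for each $F\in SP(C)$, Lemma \ref{deco1} shows that $f$ restricts to a coalgebra map $f_F\colon C(F)\to D(f(F))$. Here $f(F)$ is again a pure simple subcoalgebra of $D$: if $F=R_g$, then $f(g)$ is group-like, so $f(F)=R_{f(g)}$, which is pure and simple by Theorem \ref{group-like is direct} and Lemma \ref{g generates}. We then define the bottom map as the composite of $\oplus_F f_F\colon \oplus_F C(F)\to\oplus_F D(f(F))$ with the map $\oplus_F D(f(F))\to\oplus_{K\in SP(D)}D(K)$ that sends the summand indexed by $F$ identically onto the summand $D(K)$ with $K=f(F)$. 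This is a coproduct of coalgebra maps, so it is a coalgebra map. Several $F$ may map to the same $K$, and this is why the indexing is read as ``fold over the fibres of $F\mapsto f(F)$''.

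Commutativity of the square is checked on each summand $C(F)$. Going down and then across sends $x\in C(F)$ to $f_F(x)$ in the summand $D(f(F))$. Going across and then down sends $x$ to $f(x)\in D$, which lies in $D(f(F))$ by Lemma \ref{deco1}; under the isomorphism of Theorem \ref{irreducible generates} it therefore corresponds to the element $f(x)$ of that summand. Since $f_F=f|_{C(F)}$, the two agree, and the direct sum property gives commutativity on all of $C$.

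The only real obstacle is the well-definedness of the target, namely that $f(C(F))$ lies in a single component $D(f(F))$. This is exactly what Lemma \ref{deco1} provides, via Lemma \ref{image is irreducible}. One point to check there is that the image $f(C(F))$, which need not be pure, sits in the pure irreducible component. I would handle this by passing to its purification: by Lemma \ref{basic} it is a coalgebra, and by Lemma \ref{important} it is pointed. One then checks that this purification is pure irreducible with simple subcoalgebra $f(F)$, by comparing group-like elements after $\otimes K$ as in Lemma \ref{Gr = GR_k}. Granting Lemma \ref{deco1} as stated, the rest is formal.
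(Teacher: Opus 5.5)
Your proposal is correct and follows the paper's route: the paper's proof simply cites Lemma \ref{deco1} for the componentwise restrictions $f_F$ and Theorem \ref{irreducible generates} for the vertical isomorphisms, and you have spelled out the assembly of the bottom map (including the fold over the fibres of $F\mapsto f(F)$) and the summand-wise commutativity check. Your remark on passing to the purification of $f(C(F))$ and comparing group-like elements after $\otimes K$ is a reasonable way to justify the inclusion $f(C(F))\subset D(f(F))$, which the paper's proof of Lemma \ref{deco1} asserts without detail.
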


\begin{proof} This is a consequence of \ref{deco1} and  \ref{irreducible generates}
	\end{proof}
		
		
		
		Finally, we arrive at our main aim in this section.

\begin{theorem}\label{split}\begin{it}For $C\in\mathbf{C}_*^{flat} $ the inclusion $$\oplus_{F\in SP(C)} F \to C=\oplus_{F\in SP(C)}C(F)\cong C$$ splits naturally.\end{it}
\end{theorem}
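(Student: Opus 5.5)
The splitting will be built componentwise from the counit. By Theorem~\ref{irreducible generates}, $C=\oplus_{F\in SP(C)}C(F)$. Since $C$ is pointed, each $F\in SP(C)$ is $R$-free of rank one, generated by a unique group-like element $g_F$ (Lemma~\ref{g generates}). On each component define
\[
\pi_F\colon C(F)\to F,\qquad \pi_F(c)=\epsilon(c)\,g_F ,
\]
and let $\pi=\oplus_F\pi_F\colon C\to\oplus_{F\in SP(C)}F$. Then $\pi_F(g_F)=g_F$, so $\pi$ restricts to the identity on $\oplus_F F$. This gives the retraction of the inclusion.

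Next I check that $\pi$ is a morphism of coalgebras. On $C(F)$, counitality and coassociativity give
\[
\Delta(\pi_F(c))=\epsilon(c)\,g_F\otimes g_F=(\pi_F\otimes\pi_F)\Delta(c).
\]
This holds because $(\epsilon\otimes\epsilon)\Delta=\epsilon$. The counit is also preserved, since $\epsilon(g_F)=1$. We also need $\Delta(C(F))\subset C(F)\otimes C(F)$, which holds because $C(F)$ is a subcoalgebra. So $\pi_F$ is the composite $C(F)\xrightarrow{\epsilon}R\cong F$ of coalgebra maps, as in the corollary to Lemma~\ref{g generates}.

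For naturality, take a morphism $f\colon C\to D$ in $\mathbf{C}_*^{flat}$. By Lemma~\ref{deco2}, $f$ is the direct sum of maps $f_F\colon C(F)\to D(f(F))$, where $f(F)$ is pure simple in $D$ by Lemma~\ref{image is irreducible}. A coalgebra map sends group-likes to group-likes, so $f(g_F)=g_{f(F)}$. Since $f$ also preserves the counit, for $c\in C(F)$ we get
\[
\pi^D(f(c))=\epsilon(f(c))\,g_{f(F)}=\epsilon(c)\,f(g_F)=f(\pi^C(c)).
\]
So $\pi$ is a natural transformation from the identity to the functor $C\mapsto\oplus_{F\in SP(C)}F=RGr(C)$. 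Naturality of the inclusion is clear.

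The main obstacle is making sure the componentwise definition is legitimate and well defined, i.e.\ that it really is functorial. This needs $f$ to respect the decomposition into irreducible components, which is exactly Lemma~\ref{deco1} and Lemma~\ref{deco2}. It also needs each component to contain only the one group-like $g_F$, which is Lemma~\ref{irreducible rules}(3) combined with pointedness. I would state these two inputs explicitly and then run the counit computation above.
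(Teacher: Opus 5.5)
Your proof is correct and follows the paper's argument. The paper likewise takes the counit as the splitting on each pure irreducible component, extends it to all of $C$ through the decomposition $C=\oplus_{F\in SP(C)}C(F)$ of Theorem~\ref{irreducible generates}, and gets naturality from Lemma~\ref{deco2}; you simply write out the checks the paper leaves implicit, namely that $c\mapsto\epsilon(c)g_F$ is a coalgebra map restricting to the identity on $F$, and that $f(g_F)=g_{f(F)}$ makes the naturality square commute.
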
	
\begin{proof}	On each pure irreducible $C$ the splitting is given by the counit. Apply \ref{irreducible generates} to define the splitting for a general $C$. Naturality follows from \ref{deco2} above.
\end{proof}


The following examples show that one can not expect such a splitting for non pointed coalgebras.\\

\begin{example}Let $C=A^{\vee }$ be the $\mathbb{Z}$-linear dual of the finite  algebra $$\mathbb{Z}[ x ]/(x^2 - 2). $$
There is no algebra map from $A$ to $\mathbb{Z}$ since the root of $2$ is not an integer. There are no group-like elements in $C$ and we have to consider the map from the initial coalgebra
$$ R(\emptyset)=0\to C.$$
This  map is not split for a non trivial coalgera.

\end{example}

 One may try to work with deeply ramified rings without prime elements. For example the absolute integral closure of the integers  $\mathbb{Z}^+$ \cite{Hu} or local variants which figure in almost ring theory and p-adic Hodge theory \cite{Fa}.
 But this does not work either.
 \begin{example}
Let $C=A^{\vee }$ be the $\mathbb{Z}^+$-linear dual of the finite  algebra   $$A=\mathbb{Z}^+ [ x ]/(x^2 - 2). $$
In this case there are two algebra maps from $A$ to $\mathbb{Z}^+ $ defined by sending $x$ to 
$\sqrt{2}$ or $-\sqrt{2}$. The induced map
$$A\to  \mathbb{Z}^+ \times \mathbb{Z}^+ $$
is not surjective since the elements $(1,0)$ and $(0,1)$ are not in the image. The cokernel is $\sqrt{2}$-torsion. Hence the map induced on the duals is not split either. There is a short exact sequence:

$$\mathbb{Z}^+ \oplus \mathbb{Z}^+ \rightarrowtail C \twoheadrightarrow Ext^1 (\mathbb{Z}^+ /\sqrt{2}\mathbb{Z}^+ ,\mathbb{Z}^+ )\cong  \mathbb{Z}^+ /\sqrt{2}\mathbb{Z}^+$$ .

 \end{example}


\section{\large\textbf{Pure filtrations of coalgebras }}
\begin{definition}\begin{it} Let  $C\in \mathcal{CA}^{flat}$ and $D,F$ pure subcoalgebras of $C$. The wedge product of $D$ and $F$ is given by:
		$$D\wedge F := kernel(C \stackrel{\Delta}{\to} C\otimes C \to C/D\otimes C/ F )$$
\end{it}\end{definition}

\begin{definition}\label{filcoalg}\begin{it}
		A filtration of a coalgebra $C\in \mathcal{CA}^{flat}$ is a family of submodules
		$\{V_n \}^{\infty}_{n=0}=C$ 
		of $C$
		which satisfies 
		$$ V_0 \subset V_1 \subset V_2 \ldots V_{\infty}= \cup_{n=0}^{\infty} V_n$$ 
		
		and
		
		$$\Delta (V_n )\subset \sum_{i=0}^n V_{n-i}\otimes V_i  $$
		for all $n\geq 0$.\\
		
		We say that a filtration by submodules  $\{V_n \}^{\infty}_{n=0}$  is a $\wedge$-filtration if $$V_n \subset V_{n-1}\wedge V_0$$
		
		holds for all $n$.\\
	We say that the filtration is exhaustive if
$$C= V_{\infty}$$ holds.\end{it}
	
\end{definition}

It is not hard to see that the $V_i $ in a coalgebra filtration are in fact subcoalgebras. \\

		


The proof of the next lemma is direct from the definitions.

\begin{lemma}\begin{it}
		Let $$f:C\to D$$be a map in $\mathcal{CA}^{flat}$  and  $\{V_n \}^{\infty}_{n=0}$  a filtration of $C$.
		Then $\{  f(V_n ) \}^{\infty}_{n=0}$  is  a filtration of image($f)$.\end{it}
\end{lemma}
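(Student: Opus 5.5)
The statement is that for a map $f\colon C\to D$ in $\mathcal{CA}^{flat}$ and a filtration $\{V_n\}$ of $C$, the family $\{f(V_n)\}$ is a filtration of $\operatorname{image}(f)$. The plan is to verify the defining conditions of Definition~\ref{filcoalg} directly, using only that $f$ is a morphism of coalgebras, so that $\Delta_D\circ f=(f\otimes f)\circ\Delta_C$.

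\textbf{Monotonicity and exhaustiveness.} Since $f$ preserves inclusions, $V_n\subset V_{n+1}$ gives $f(V_n)\subset f(V_{n+1})$. Taking images commutes with directed unions, so
\[ \textstyle\bigcup_n f(V_n)=f\bigl(\bigcup_n V_n\bigr). \]
If the original filtration is exhaustive, this union is $f(C)=\operatorname{image}(f)$.

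\textbf{The comultiplication condition.} For $x\in V_n$ we have
\[ \Delta_C(x)\in \textstyle\sum_{i=0}^n V_{n-i}\otimes V_i . \]
Applying $f\otimes f$ gives
\[ \Delta_D(f(x))=(f\otimes f)\Delta_C(x)\in \textstyle\sum_i f(V_{n-i})\otimes f(V_i), \]
where $f(V_{n-i})\otimes f(V_i)$ denotes the image of $f(V_{n-i})\otimes f(V_i)\to D\otimes D$.

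\textbf{The main obstacle.} The filtration should live in $\operatorname{image}(f)$, so the condition has to be read in $\operatorname{image}(f)\otimes\operatorname{image}(f)$ rather than in $D\otimes D$. Over a PID, $\operatorname{image}(f)\subset D$ is a submodule of a flat module, hence torsion-free and therefore flat. So $\operatorname{image}(f)\otimes\operatorname{image}(f)\to D\otimes D$ is injective, because tensoring injections between flat modules preserves injectivity.

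Using this, the comultiplication of $D$ restricts to $\operatorname{image}(f)$, making it a subcoalgebra. Moreover, the element $(f\otimes f)\Delta_C(x)$ already lies in the image of $\sum_i f(V_{n-i})\otimes f(V_i)$ taken inside $\operatorname{image}(f)\otimes\operatorname{image}(f)$, computed before pushing forward to $D\otimes D$. This gives the filtration condition inside $\operatorname{image}(f)$, which completes the proof.
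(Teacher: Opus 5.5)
Your proposal is correct and is the direct check against Definition~\ref{filcoalg} that the paper intends; the paper's only justification is that the lemma is ``direct from the definitions.'' The one point that needs saying is that $\mathrm{image}(f)$ is torsion-free, hence flat over the PID, so $\mathrm{image}(f)\otimes\mathrm{image}(f)\to D\otimes D$ is injective and the comultiplication condition can be read inside $\mathrm{image}(f)$; you state and use this correctly.
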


 \begin{lemma}
 	\begin{it}
	Let $C,D\in \mathcal{CA}^{flat}$   and $$f: C\to D$$ a surjective morphism. Let $\{C_j \}^{\infty}_{j=0}$ be a filtration of $C$ by pure subobjects.
	Moreover, assume 
	$$f(C_0 )=\widetilde{f(C_0 )}.$$
	Then $$D_0 \subset f(C_0 )$$ where $D_0 $ is the pure coradical.
	\end{it}
\end{lemma}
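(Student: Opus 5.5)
We show that every pure simple subcoalgebra $S$ of $D$ lies in $f(C_0)$. Since $D_0$ is the sum of such $S$, this gives $D_0\subset f(C_0)$.

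\textbf{Step 1: $S$ meets the image of some filtration piece.} Let $S\subset D$ be pure simple. By \ref{irreducible rules}(1), $S$ has finite rank. Because the filtration is exhaustive, $f$ is surjective, and $S$ is finitely generated (a pure submodule of finite rank of a flat module over a principal ideal domain is free of finite rank), the chain $\{f(C_j)\}$ eventually contains $S$. So $S\subset f(C_n)$ for some $n$, and by the lemma just before, $\{f(C_j)\}$ is a filtration of $D=\mathrm{image}(f)$. We argue by induction on the least such $n$.

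\textbf{Step 2: the intersection with $f(C_0)$.} Put $E:=S\cap f(C_0)$. By hypothesis $f(C_0)$ equals its purification, so it is pure in $D$. The filtration pieces $C_j$ are subcoalgebras, as remarked after Definition \ref{filcoalg}, so $f(C_0)$ is a pure subcoalgebra of $D$. By Lemma \ref{infcap} (or \cite[lemma 3.9.]{St}), $E$ is a pure subcoalgebra of $S$. Since $S$ is pure simple, either $E=S$, and we are done, or $E=0$. It remains to rule out $E=0$.

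\textbf{Step 3: ruling out $E=0$ via the coalgebra filtration.} Suppose $S\cap f(C_0)=0$ and $n\geq 1$ is minimal with $S\subset f(C_n)$. Pass to the flat quotient $Q:=D/f(C_0)$; it is flat because $f(C_0)$ is pure. Since $E=0$, the map $S\to Q$ is injective, and I would check that it is pure by testing mod $p$ via \ref{pure mod p}. Reducing everything mod a prime $p$, or tensoring with $K$, puts us over a field, where the classical argument applies. There, the coradical of a coalgebra lies in the degree $0$ part of any coalgebra filtration: a simple subcoalgebra $\bar S$ with $\Delta(\bar S)\subset\sum_i \bar V_{n-i}\otimes \bar V_i$ must meet $\bar V_0$, by \cite[Prop.\ 11.1.1]{S}-style reasoning applied to $\bar V_0\wedge\bar V_{n-1}$. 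Concretely, for $s\in S$ the image of $\Delta(s)$ in $Q\otimes Q$ lies in $\sum_{i\geq1,\,n-i\geq1}$ terms. Iterating the comultiplication, by coassociativity $\Delta^{(n)}(s)$ maps to zero in $Q^{\otimes (n+1)}$. Applying the counit shows that $s$ itself maps to zero in $Q$, a contradiction unless $S=0$.

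\textbf{Main obstacle.} The delicate step is Step 3: making the classical ``$\Delta^{(n)}$ kills modulo $V_0$'' argument work integrally. This requires the tensor products $Q^{\otimes k}$ and the images $f(C_j)$ to behave with respect to kernels. Flatness of $Q$ is guaranteed by the hypothesis $f(C_0)=\widetilde{f(C_0)}$, but the higher $f(C_j)$ need not be pure. I would first try tensoring with $K$. There, $S\otimes K$ is simple, $f(C_0)\otimes K$ is a subcoalgebra, and the classical field result \cite[8.0.5]{S}, that the coradical is contained in $V_0$ for any coalgebra filtration, gives $S\otimes K\subset f(C_0)\otimes K$. Purity of $f(C_0)$ in $D$ then yields $S\subset f(C_0)$ directly. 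This descent from $K$ via purity is the step I expect to carry the proof, and it avoids the induction altogether.
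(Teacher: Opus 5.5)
Your closing argument (tensor with $K$, apply the field result, descend by purity) is correct, and it takes a genuinely different route from the paper. Steps 1--3 should be dropped, not kept as a preamble. In Step 1, a pure finite-rank submodule of a flat module need not be finitely generated (e.g.\ $\mathbb{Q}\subset\mathbb{Q}$ over $\mathbb{Z}$), and exhaustiveness only gives $S\subset\widetilde{f(C_n)}$. In Step 3, the counit does not descend to $Q=D/f(C_0)$.

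The paper stays over $R$. It purifies the image filtration and uses Lemma~\ref{basic} and Proposition~\ref{987} to obtain an exhaustive $\wedge$-filtration $\{\widetilde{f(C_j)}\}$ of $D$ by pure subcoalgebras. Lemma~\ref{999}, which rests on the wedge property in Lemma~\ref{111}, then places every pure simple $S$ in $\widetilde{f(C_0)}=f(C_0)$.

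You instead note that $\{f(C_j)\otimes K\}$ is an exhaustive coalgebra filtration of $D\otimes K$. You then use the classical fact that the coradical lies in its zeroth term; this is the coalgebra-filtration result you allude to in Step~3 (Sweedler 11.1.1), not 8.0.5. Finally you descend: for $s\in S$ some $rs$ with $r\neq 0$ lies in $f(C_0)$, and RD-purity of $f(C_0)$ together with torsion-freeness of $D$ gives $s\in f(C_0)$.

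Your version is shorter and delegates all the filtration combinatorics to the field case. The paper's version never has to compare pure simplicity over $R$ with simplicity over $K$, and its tools (Proposition~\ref{987}, Lemma~\ref{999}) are reused later, e.g.\ in Lemma~\ref{777}.

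That comparison is the one step you assert without proof (``$S\otimes K$ is simple''), and it needs a short argument. Suppose $0\neq W\subsetneq S\otimes K$ were a subcoalgebra. Then $W\cap S$ is a nonzero, proper, pure submodule of $S$. It is also a subcoalgebra: $(W\cap S)\otimes(W\cap S)$ is pure in $S\otimes S$ with $K$-span $W\otimes_K W$, so $\Delta(W\cap S)\subset (S\otimes S)\cap(W\otimes_K W)=(W\cap S)\otimes(W\cap S)$. This contradicts pure simplicity of $S$.
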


\begin{proof} Let $S\subset D$ be a pure simple subcoalgebra.
	Then there is $i$ such that $$ S\subset \widetilde{f(C_i )}.$$ In fact, by \ref{999} $$S\subset \widetilde{f(C_0 )}=f(C_0 ).$$

\end{proof}

\begin{lemma}\begin{it}\label{111} Let $U,V\subset D$ and $D\subset C$ be pure  subcoalgebras in $\mathcal{CA}^{flat}$.\begin{enumerate}
		\item 
	Then  $$U\wedge_D  V = (U\wedge_C V)\cap D.$$
	  \item Let $S\subset U\wedge_C  V$ be pure simple and non trivial.
	  Then $S\subset U$ or $S\subset V$.
	\end{enumerate}\end{it}
\end{lemma}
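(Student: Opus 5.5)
For (1) I would compare the two kernels directly. By definition, $U\wedge_C V$ is the kernel of
$$C\xrightarrow{\Delta} C\otimes C\to C/U\otimes C/V,$$
and $U\wedge_D V$ is the kernel of the analogous composite for $D$. Since $D$ is a subcoalgebra, $\Delta_C$ restricted to $D$ is $\Delta_D$. So the task reduces to showing that
$$D/U\otimes D/V\to C/U\otimes C/V$$
is injective. The inclusion $D/U\to C/U$ has cokernel $C/D$, which is flat because $D$ is pure in the flat module $C$. Hence $D/U\to C/U$ is a pure monomorphism of flat modules, by Lemma \ref{pure mod p} or the RD characterization. The same holds for $D/V\to C/V$. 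A tensor product of two injections between flat modules is injective: factor it as
$$D/U\otimes D/V\to C/U\otimes D/V\to C/U\otimes C/V$$
and use flatness of $D/V$ and of $C/U$ in turn. So an element of $D$ dies in $C/U\otimes C/V$ exactly when it dies in $D/U\otimes D/V$, which gives $U\wedge_D V=(U\wedge_C V)\cap D$.

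For (2), let $S$ be pure simple with $S\subset U\wedge_C V$. The strategy is the classical field argument: show that $S\cap U$ or $S\cap V$ is nonzero, and then conclude. Pure simplicity forces the conclusion at that point. By Lemma \ref{intersections}, $S\cap U$ is pure, and by Lemma \ref{infcap} it is a subcoalgebra. Being pure simple, $S$ then forces $S\cap U\in\{0,S\}$.

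So suppose $S\cap U=0=S\cap V$; the aim is to derive a contradiction. Tensor with $K$. Since $S,U,V$ are pure, $S\otimes K$, $U\otimes K$ and $V\otimes K$ are subcoalgebras of $C\otimes K$. The wedge commutes with $-\otimes K$ because $K$ is flat. We get
$$S\otimes K\subset (U\otimes K)\wedge(V\otimes K),$$
with $(S\otimes K)\cap(U\otimes K)=(S\cap U)\otimes K=0$, and similarly for $V$.

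The main obstacle is that $S\otimes K$ need not be simple over $K$. To handle this, I pick a simple $K$-subcoalgebra $T\subset S\otimes K$, which exists by finite dimensionality and Lemma \ref{irreducible rules}(1). The classical result over a field (Sweedler, the coradical lies in $U\wedge V$ only via $U+V$, or Radford's lemma that a simple subcoalgebra of $U\wedge V$ lies in $U$ or $V$) gives $T\subset U\otimes K$ or $T\subset V\otimes K$. Either way this contradicts $(S\otimes K)\cap(U\otimes K)=0$, respectively $(S\otimes K)\cap(V\otimes K)=0$. The points I would check most carefully are the compatibility of $\wedge$ with $\otimes K$ and the identity $(S\cap U)\otimes K=(S\otimes K)\cap(U\otimes K)$, which follows from flatness of $K$.
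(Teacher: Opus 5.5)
Your part (1) is the paper's argument: both reduce to injectivity of $D/U\otimes D/V\to C/U\otimes C/V$ by flatness and then compare kernels.

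For (2) you take a different route, and it is correct. The paper stays over $R$ and derives (2) from (1). It writes $S=S\cap(U\wedge_C V)=(S\cap U)\wedge_S(S\cap V)$. If both intersections were $0$, this would be $\ker\Delta_S=0$ (implicitly using that $\Delta_S$ is injective because of the counit). So one intersection is a nonzero pure subcoalgebra of $S$, hence all of $S$.

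Strictly speaking, the paper's step is not (1) verbatim, since $U,V\not\subset S$. It is the same injectivity argument, applied to $S/(S\cap U)\to C/U$ and $S/(S\cap V)\to C/V$.

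You instead base change to $K$, pick a simple $T\subset S\otimes K$, and invoke the classical field lemma. The compatibilities you flag do hold by flatness of $K$: $(U\wedge_C V)\otimes K=(U\otimes K)\wedge(V\otimes K)$ and $(S\cap U)\otimes K=(S\otimes K)\cap(U\otimes K)$.

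Your route hands the key step to a textbook fact and avoids the small imprecision above. The cost is the extra base-change bookkeeping and the choice of $T$. The paper's argument is self-contained; it is really the proof of the field lemma carried out directly over $R$.

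The obstacle you anticipate does not actually arise. If $T'\subset S\otimes K$ is a subcoalgebra, then $T'\cap S$ is a pure subcoalgebra of $S$ with $(T'\cap S)\otimes K=T'$. So $S\otimes K$ is already simple whenever $S$ is pure simple. Your workaround is harmless, but unnecessary.
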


\begin{proof} Since $U, V$ are pure in $C$ and $D$, $D/U, D/V ,C/U, C/V$ are flat. Moreover,  $$D/U \to C/U$$ and $$D/V \to C/V$$ are pure injective. Consequently, $$D/U \otimes D/V \to C/U \otimes C/V$$
	is pure injective.\\ Now consider the commuting diagram in which the rows are short exact and the vertical morphisms are pure injective:
	
\xymatrix{U\wedge_D  V \ar[d]\ar@{>->}[rr]^{} &&
	D\otimes D \ar[d]\ar@{->>}[rr]^-{} && (D/U \otimes D/V)\ar[d] 
	\\
	U\wedge_C V  \ar@{>-}[rr] &&    C\otimes C 
	\ar@{->>}[rr]^-{} && (C/U \otimes C/V)
}

The assertion  (1) follows by a diagram chase.\\
By (1) 	$$S=S\cap (U\wedge_C V)= (S\cap  U)\wedge_S(V\cap S).$$
Since $S$ is non trivial, $S\cap U$ or $S\cap V$ is non trivial.
But because $S$ is pure simple $S=S\cap U$ or $S=S\cap V$.
	
\end{proof}

	



\begin{lemma}\label{999}
Let $C\in \mathcal{CA}^{flat}$  and $\{V_n \}^{\infty}_{n=0}$  an exhaustive $\wedge$-filtration by pure subcoalgebras. Then each pure simple subcoalgebra $S\subset C$ is contained in $V_0$.

\end{lemma}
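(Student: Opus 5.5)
We argue by induction along the filtration, using Lemma \ref{111}(2) as the engine. Let $S\subset C$ be a non-trivial pure simple subcoalgebra. By Lemma \ref{irreducible rules}(1), $S$ has finite rank, hence is finitely generated. Since the filtration is exhaustive, $C=\bigcup_n V_n$ is an increasing union, so a finite generating set of $S$ lies in a single $V_n$. Thus $S\subset V_n$ for some $n$, and we choose $n$ minimal with this property.

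Suppose $n\geq 1$. The $\wedge$-filtration hypothesis gives
$$S\subset V_n\subset V_{n-1}\wedge_C V_0 .$$
Both $V_{n-1}$ and $V_0$ are pure subcoalgebras of $C$. Apply Lemma \ref{111}(2) with $U=V_{n-1}$, $V=V_0$ and ambient coalgebra $D=C$. It yields $S\subset V_{n-1}$ or $S\subset V_0$. Either alternative contradicts the minimality of $n$, because $V_0\subset V_{n-1}$ when $n\geq 1$. Hence $n=0$, that is, $S\subset V_0$.

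The only step that needs care is the application of Lemma \ref{111}(2). That lemma rests on part (1), which identifies $(U\wedge_C V)\cap S$ with $U'\wedge_S V'$, where $U'=S\cap U$ and $V'=S\cap V$. For this identification we must check that $S\cap V_{n-1}$ and $S\cap V_0$ are pure subcoalgebras of $S$. Purity follows from Lemma \ref{intersections}. The coalgebra structure follows from Lemma \ref{infcap}.

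We also need the following fact, which is implicit in Lemma \ref{111}: if $U',V'\subset S$ are pure with $U'\wedge_S V'=S$ and $S\neq 0$, then one of $U',V'$ is non-zero. To see this, note that if $U'=V'=0$, then $0\wedge_S 0$ is the kernel of $\Delta$. The counit identity $(\epsilon\otimes 1)\Delta=\mathrm{id}$ shows that $\Delta$ is injective, so this kernel is $0$, contradicting $S\neq 0$. Pure simplicity of $S$ then upgrades the non-zero intersection to $S\cap U=S$ or $S\cap V=S$. I expect this verification, rather than the induction itself, to be the main point to get right.
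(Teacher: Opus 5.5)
Your strategy is the paper's: place $S$ in some $V_n$, then use $V_n\subset V_{n-1}\wedge V_0$ and Lemma \ref{111}(2) to push $S$ down to $V_0$. Your minimal-$n$ formulation is just the paper's induction rephrased. Your check that $0\wedge_S 0=\ker\Delta=0$ usefully fills in a step the paper leaves implicit.

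\textbf{The gap is in the first step.} You infer ``finite rank, hence finitely generated'', and that is false over a principal ideal domain. For example, $\mathbb{Z}[1/p]$ is flat of rank one over $\mathbb{Z}$. It fails even for pure simple coalgebras. Take $R=\mathbb{Z}$ and let $e_1,e_\theta$ be the basis of $\mathbb{Q}(\sqrt2)^*$ dual to $1,\sqrt2$. Then $\Delta e_1=e_1\otimes e_1+2e_\theta\otimes e_\theta$, $\Delta e_\theta=e_1\otimes e_\theta+e_\theta\otimes e_1$, $\epsilon(e_1)=1$ and $\epsilon(e_\theta)=0$. Hence $S=\mathbb{Z}e_1\oplus\mathbb{Z}[1/p]e_\theta$ is a flat $\mathbb{Z}$-coalgebra of rank $2$. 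It is pure simple: a pure subcoalgebra $T\subset S$ is determined by $T\otimes\mathbb{Q}$, which is a subcoalgebra of the simple coalgebra $\mathbb{Q}(\sqrt2)^*$. Yet $S$ is not finitely generated. So ``a finite generating set of $S$ lies in a single $V_n$'' is not available. The lemma is stated for arbitrary $C\in\mathcal{CA}^{flat}$, and it is used in Lemma \ref{777} for coalgebras not yet known to be pointed.

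\textbf{The conclusion $S\subset V_n$ is still true, and either short argument repairs it.}
\begin{enumerate}
\item Pick $0\neq x\in S$ and $n$ with $x\in V_n$. By Lemma \ref{intersections} and Lemma \ref{infcap}, which you already invoke later, $S\cap V_n$ is a non-zero pure subcoalgebra of $S$. Pure simplicity then forces $S\cap V_n=S$.
\item Use finite rank as the paper does, but together with purity of $V_n$. Since $S\otimes K$ is finite dimensional and $C\otimes K=\bigcup_n V_n\otimes K$, we get $S\otimes K\subset V_n\otimes K$ for some $n$. So each $x\in S$ satisfies $rx\in V_n$ for some $r\neq 0$. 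As $V_n$ is pure in the torsion-free module $C$, $rx\in rC\cap V_n=rV_n$, which gives $x\in V_n$.
\end{enumerate}
With this repair, the rest of your proof goes through as written.
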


\begin{proof} Since $S$  is of finite rank, it is contained in $V_n$ for some $n$. If  $n\geq 1$  then $$  S\subset V_n \subset V_0 \wedge V_{n-1} .$$  By \ref{111} , $S\subset V_0 $ or $S\subset  V_{n-1}$. It follows that $S\subset V_0 $  by induction.

\end{proof}

\begin{proposition}\label{987}\begin{it} Let $C\in \mathcal{CA}^{flat}$ and  $\{V_n \}^{\infty}_{n=0}$ be a $\wedge$ filtration of $C$. Then the purification 
	 $\{\tilde{V_n } \}^{\infty}_{n=0}$ is a $\wedge$-filtration of $C$.
	\end{it}
\end{proposition}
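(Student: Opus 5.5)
We must show that the purifications $\widetilde{V_n}$ form an increasing family of subcoalgebras, that $\Delta(\widetilde{V_n})\subset \sum_{i=0}^n \widetilde{V_{n-i}}\otimes \widetilde{V_i}$, and that $\widetilde{V_n}\subset \widetilde{V_{n-1}}\wedge \widetilde{V_0}$. Monotonicity is immediate: $\widetilde{V_n}$ is the intersection of all pure submodules containing $V_n$, and each such submodule of $C$ that contains $V_{n+1}$ also contains $V_n$. By \ref{basic} each $\widetilde{V_n}$ is a subcoalgebra, since the $V_n$ are subcoalgebras.

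The plan rests on one principle. If $f\colon M\to N$ is a map of flat modules with flat target and $f(U)=0$, then $f(\widetilde U)=0$. Indeed, $\ker f$ is pure in $M$, because $M/\ker f\subset N$ is flat and $R$ is a PID, so $\ker f$ contains $\widetilde U$. Apply this to
$$\phi_n\colon C\stackrel{\Delta}{\to} C\otimes C\to C/\widetilde{V_{n-1}}\otimes C/\widetilde{V_0}.$$
The target is flat because $\widetilde{V_{n-1}}$ and $\widetilde{V_0}$ are pure, so their quotients are flat, and tensor products of flat modules are flat. The hypothesis $V_n\subset V_{n-1}\wedge V_0$ says that $\Delta(V_n)$ maps to zero in $C/V_{n-1}\otimes C/V_0$. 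Since $C/V_{n-1}\to C/\widetilde{V_{n-1}}$ and $C/V_0\to C/\widetilde{V_0}$ are surjections, $\phi_n(V_n)=0$ as well. The principle then gives $\phi_n(\widetilde{V_n})=0$, which is exactly $\widetilde{V_n}\subset \widetilde{V_{n-1}}\wedge\widetilde{V_0}$.

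The filtration condition goes the same way. Set $W_n=\sum_{i}\widetilde{V_{n-i}}\otimes\widetilde{V_i}\subset C\otimes C$. We have $\Delta(V_n)\subset W_n$, so it suffices to show that $W_n$ is pure in $C\otimes C$. Once that holds, the principle applied to $C\to C\otimes C\to (C\otimes C)/W_n$ gives $\Delta(\widetilde{V_n})\subset W_n$.

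The main obstacle is the purity of $W_n$. I would argue as in the later discussion of the $U_j$ used in \ref{st}, by induction on $n$, showing that each successive quotient is flat. Since the $\widetilde{V_i}$ are increasing, $W_n/W_{n-1}$ should be identified with
$$\bigoplus_{i}\widetilde{V_{n-i}}/\widetilde{V_{n-i-1}}\otimes \widetilde{V_i}/\widetilde{V_{i-1}}$$
modulo lower terms. This identification requires the intersection formula \ref{45}, in the form $(A\otimes B)\cap(A'\otimes B')=(A\cap A')\otimes(B\cap B')$ for pure $A,A',B,B'$, in order to control overlaps of the summands. Granting it, each graded piece is a tensor product of flat modules and hence flat. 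Finally, $(C\otimes C)/W_n$ is flat, since it is a colimit of such extensions together with the flat quotient $C/\widetilde{V_N}\otimes C+\cdots$. Alternatively, one can check the injectivity of $W_n\otimes R/p\to C\otimes C\otimes R/p$ via \ref{pure mod p}: over the field $R/p$, each $\widetilde{V_i}\otimes R/p$ embeds, and the sum of tensor products of a flag is a subspace of the expected dimension.
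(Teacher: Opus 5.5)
Your argument for the inclusion $\widetilde{V_n}\subset\widetilde{V_{n-1}}\wedge\widetilde{V_0}$ is the paper's proof. The paper takes $x\in\widetilde{V_n}$, uses that some nonzero multiple $rx$ lies in $V_n$, and then uses that $C/\widetilde{V_{n-1}}\otimes C/\widetilde{V_0}$ is torsion-free; your principle (the kernel of a map into a flat module is pure, so it contains the purification) is the same fact in different words.

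The paper checks only this wedge inclusion, so your extra verification of $\Delta(\widetilde{V_n})\subset\sum_i\widetilde{V_{n-i}}\otimes\widetilde{V_i}$ goes beyond it. The purity of $W_n$ that this needs is exactly Proposition \ref{oo} with $C_n=D_n=\widetilde{V_n}$, so you can cite it rather than re-derive it. If the filtration is not exhaustive, you also need purity of $\bigl(\bigcup_n\widetilde{V_n}\bigr)\otimes\bigl(\bigcup_n\widetilde{V_n}\bigr)$ in $C\otimes C$; this holds because tensor products of pure monomorphisms are pure.
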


\begin{proof}
We must show that $$ \tilde{V_n }\subset \tilde{V}_{n-1} \wedge \tilde{V_0 }$$ for all $n$. Consider the commuting diagram:\\
\xymatrix{V_n \ar[d]\ar[rr]^{} &&
	C \ar[d]^{id}\ar[rr]^-{} && (C/V_{n-1} \otimes C/V_0  )\ar[d] 
	\\
	\tilde{V}_n   \ar[rr] &&    C 
	\ar[rr]^-{} && (C/\tilde{V}_{n-1} \otimes C/\tilde{V}_0 )
}

The composition of the upper row is trivial, since $\{V_n \}^{\infty}_{n=0}$ is a $\wedge$ filtration. For $x\in 	\tilde{V}_n $ there is $r\in R$ with $rx \in V_n $. Hence, the composition in the upper row is trivial on $rx$. But $(C/\tilde{V}_{n-1} \otimes C/\tilde{V}_0 )$ is flat, so the lower composition is trivial on $x$ as well. This shows the claim.
\end{proof}

\begin{lemma}\label{777}\begin{it}
	Let $$f:C \to D$$ be a surjective map in $\mathcal{CA}^{flat}$   and $C\in \mathcal{CA}^{flat}_*$. Then any pure simple subcoalgebra $S\subset D$ is a group-like coalgebra,
\end{it}\end{lemma}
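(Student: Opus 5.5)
The strategy is to reduce to the irreducible case and then pass to the quotient field $K$, where the classical theory over fields applies; the comparison $Gr(-)\cong Gr(-\otimes K)$ of \ref{Gr = GR_k} then brings the conclusion back to $R$. Since we only know that $C$ is pointed, and not $D$, that comparison cannot be applied to $D$ directly; the argument has to route through $C$.

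First, by \ref{irreducible generates} write $C=\oplus_{g\in Gr(C)}P_g$. Then $D=f(C)=\sum_g f(P_g)$.

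Let $S\subset D$ be pure simple. By \ref{irreducible rules}(1), $S$ has finite rank. Consider the pullback $f^{-1}(S)\cap P$ for suitable finite-rank pieces. A cleaner route is through $K$:
\begin{itemize}
\item $S\otimes K$ is a nonzero subcoalgebra of $D\otimes K$.
\item $f\otimes K\colon C\otimes K\to D\otimes K$ is surjective.
\item By \ref{Kisok} together with \ref{irreducible generates}, $C\otimes K$ is pointed over $K$: its simple subcoalgebras are the $K_g$ for $g\in Gr(C)=Gr(C\otimes K)$.
\end{itemize}
By the classical result that a quotient of a pointed coalgebra over a field is pointed (Sweedler, \cite[8.0.6]{S}, used as in \ref{image is irreducible}), $D\otimes K$ is pointed. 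Hence $S\otimes K$ contains a simple subcoalgebra $K h$ with $h$ group-like, and $h=(f\otimes K)(g\otimes 1)=f(g)\otimes 1$ for some $g\in Gr(C)$. Indeed, the group-likes of a quotient of a pointed coalgebra over a field are images of group-likes, since $f\otimes K$ maps each irreducible component $U_g$ onto an irreducible coalgebra whose unique group-like is the image of $g$.

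It remains to descend to $R$. The element $f(g)\in D$ is group-like, so $Rf(g)$ is a pure subcoalgebra of $D$ by \ref{group-like is direct}. Moreover $f(g)\otimes 1\in S\otimes K$, and since $S$ is pure in $D$ (so $D/S$ is flat, hence torsion-free), some nonzero multiple of $f(g)$ lying in $S$ forces $f(g)\in S$. Thus $Rf(g)\subset S$ is a nonzero pure subcoalgebra of $S$, which is pure in $S$ by the second listed property of pure monomorphisms. Simplicity of $S$ then gives $S=Rf(g)$, a group-like coalgebra.

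The main obstacle is the step asserting that $D\otimes K$ is pointed, that is, that every simple subcoalgebra of a quotient of a pointed $K$-coalgebra is spanned by the image of a group-like. I would justify it by writing $C\otimes K$ as the direct sum of its irreducible components $U_g$ and applying \cite[8.0.6]{S} to each surjection $U_g\to f(U_g)$. Any simple subcoalgebra of $D\otimes K=\sum_g f(U_g)$ then lies in some $f(U_g)$, by the field analogue of \ref{1234}, and therefore equals $K f(g)$.
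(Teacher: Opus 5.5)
Your proof is correct, but it takes a different route from the paper's.

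\textbf{The paper's route.} The paper never leaves $R$. It pushes the pure coradical filtration of $C$ forward along $f$ and purifies it. By \ref{987} this gives an exhaustive $\wedge$-filtration of $D$ by pure subcoalgebras. Then \ref{999} forces every pure simple $S\subset D$ into $\widetilde{f(C_0)}$. That purification lies in the pure subcoalgebra $RGr(D)$ of \ref{group-like is direct}, because $f(C_0)=f(RGr(C))\subset RGr(D)$. In effect, the paper redoes integrally the classical argument that $D_0\subseteq f(C_0)$ for a surjection.

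\textbf{Your route.} You apply that classical statement to $f\otimes K$. You then descend using flatness of $D$ and purity of $S$.

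\textbf{What each buys.} The paper's version uses only the filtration lemmas. It does not need the decomposition into irreducible components (\ref{irreducible generates}, \ref{Kisok}), whose proofs in the paper are only sketched. Your version makes the final identification $S=Rf(g)$ explicit. The paper stops at $S\subset RGr(D)$ and leaves implicit that a pure simple subcoalgebra of a group-like coalgebra is spanned by a single group-like.

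\textbf{Removing the heavier dependencies.} You can drop your reliance on \ref{irreducible generates} and \ref{Kisok}. Let $E\subset C\otimes K$ be simple. Then:
the submodule $E\cap C$ is pure in $C$, since its quotient embeds in the $K$-vector space $(C\otimes K)/E$, and $(E\cap C)\otimes K=E$;
it is a subcoalgebra, because $(E\otimes_K E)\cap(C\otimes C)=(E\cap C)\otimes(E\cap C)$ by purity;
it is pure simple, since a proper nonzero pure subcoalgebra $F'$ would make $F'\otimes K$ a proper nonzero subcoalgebra of $E$.
Pointedness of $C$ then gives $\dim_K E=1$ directly.

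Finally, the abandoned remark about pullbacks and the decomposition $D=\sum_g f(P_g)$ over $R$ play no role and can be deleted.
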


\begin{proof}
Let  $\{U_n \}^{\infty}_{n=0}$  be the pure coradical filtration of $C$  defined in the section below and  $\{V_n \}^{\infty}_{n=0}$ the image filtration under $f$. We let  $\{W_n \}^{\infty}_{n=0}$ be the filtration od $D$ obtained by purification of   $\{V_n \}^{\infty}_{n=0}$. By \ref{987} and \ref{999}, every pure simple $S$ in $D$ is contained in $W_0$. Now $$f(C_0)=f(Gr(C))\subset Gr(D)$$.
Hence
$$\widetilde{f(C_0 )}=W_0 \subset \widetilde{Gr(D)}=Gr(C). $$

\end{proof}


	



\section{\large\textbf{The pure coradical filtration }}

\begin{definition}\begin{it}
		Let $C\in  \mathbf{C}^{flat}$. Let $C_0$ be the pure coradical of $C$.

		Define a  filtration $C_i$  of $C$, by submodules in an inductive way, by:
		
		$$C_n := C_{n-1}\wedge C_0$$
		This is called the pure coradical filtration.\\
\end{it}\end{definition}

\begin{lemma}\label{4}\begin{it} Let $C\in  \mathbf{C}_*^{flat}$ and $C_i $ be the terms in the pure coradical filtration.\\
		1) The inclusions $$C_i \to C$$
		are pure.\\
		2)	The inclusions  $$C_i \to C_{i+1}$$ are pure morphisms of coalgebras.\\
		3) These filtrations of $C$ and $C\otimes K$ are  related by an isomorphisms
		$$(C\otimes K)_i \cong (C_i )\otimes K$$ induced by the inclusion $C \to C\otimes K$.\\
		4) The filtration $C_i$ is exhaustive i.e.
		$$C=\cup C_i .$$\end{it}
\end{lemma}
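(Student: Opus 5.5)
The plan is to prove the four assertions in the order 1), 2), 3), 4), by induction on $i$, with the wedge product written as a kernel.

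\textbf{Parts 1) and 2).} For $i=0$, $C_0=RGr(C)$ is pure in $C$ by \ref{group-like is direct}. Assume $C_{n-1}$ is pure in $C$. Then $C/C_{n-1}$ and $C/C_0$ are flat, so $C/C_{n-1}\otimes C/C_0$ is flat. Hence $C_n=\ker(C\to C/C_{n-1}\otimes C/C_0)$ is the kernel of a map into a flat, hence torsion free, module. Such a kernel is an RD-submodule: if $rx\in C_n$ with $x\in C$ and $r\neq 0$, then $r\cdot\mathrm{image}(x)=0$, so $x\in C_n$. This is exactly the argument of \ref{987}, and it gives purity of $C_n$ in $C$. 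Since $C_0\subset C_{n-1}$ by induction, $\Delta(C_{n-1})\subset C_{n-1}\otimes C_{n-1}$ maps to zero, so $C_{n-1}\subset C_n$. The inclusion $C_i\to C_{i+1}$ is then pure by the rule that $fg$ pure implies $g$ pure. For the coalgebra property I would follow the classical argument that a wedge of subcoalgebras is a subcoalgebra. Coassociativity shows $\Delta(C_n)\subset C_n\otimes C+C\otimes C_0$ and symmetrically. Then \ref{45} identifies intersections of the form $(C_n\otimes C)\cap(C\otimes C_n)$ with $C_n\otimes C_n$, which makes $C_n$ a pure subcoalgebra.

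\textbf{Part 3).} The statement $(C\otimes K)_0=C_0\otimes K$ follows from \ref{Gr = GR_k} and the remark that $C_0=RGr(C)$. Over $K$ the coradical of a pointed coalgebra is $K Gr$; pointedness of $C\otimes K$ is plausible from \ref{Gr = GR_k} together with \ref{Kisok}, or one can simply take the coradical filtration over $K$. Localization is exact, so tensoring the defining exact sequence
$$0\to C_n\to C\to C/C_{n-1}\otimes C/C_0$$
with $K$ gives $C_n\otimes K=\ker\bigl(C\otimes K\to (C\otimes K)/(C_{n-1}\otimes K)\otimes_K (C\otimes K)/(C_0\otimes K)\bigr)$. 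By induction this kernel equals $(C\otimes K)_{n-1}\wedge (C\otimes K)_0=(C\otimes K)_n$.

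\textbf{Part 4).} The union $U=\cup C_i$ is a directed union of pure submodules, so it is pure in $C$ and $C/U$ is flat. By 3) and exactness of $-\otimes K$, $U\otimes K=\cup (C\otimes K)_i$. This equals $C\otimes K$ by the classical theorem that the coradical filtration over a field is exhaustive \cite{S}. Therefore $(C/U)\otimes K=0$, and since $C/U$ is flat (torsion free) it follows that $C/U=0$.

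\textbf{Main obstacle.} The step I expect to be hardest is the subcoalgebra claim in 2): showing that $\Delta(C_n)$ lands in $C_n\otimes C_n$ and not only in a sum of mixed terms. If the direct coassociativity argument stalls, I would instead use 3) for the corresponding indices: over $K$ the wedge filtration is known to consist of subcoalgebras. Then I would use that $C_n\otimes C_n$ is pure in $C\otimes C$, with $C_n\otimes C_n=(C\otimes C)\cap\bigl((C\otimes K)_n\otimes_K (C\otimes K)_n\bigr)$, to descend $\Delta(C_n)\subset C_n\otimes C_n$ from $K$ to $R$.
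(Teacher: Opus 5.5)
Your proposal is correct, and for 1), 3) and 4) it is the paper's argument. In 4) you argue through the flat quotient $C/\bigcup_i C_i$ rather than elementwise, which amounts to the same thing.

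\textbf{The route in 2).} The real difference is the subcoalgebra claim. The paper gets it by descent: over $K$ the coradical filtration consists of subcoalgebras, and 3) together with \ref{CCK} pulls this back to $R$. That is your fallback. Your primary route runs the classical wedge argument directly over $R$, but the intermediate inclusion is misstated: $\Delta(C_n)\subset C_n\otimes C+C\otimes C_0$ is just the definition. What coassociativity actually gives is the following. For $z\in C_n$, the element $(\Delta\otimes 1)\Delta z=(1\otimes\Delta)\Delta z$ lies in $(C_{n-1}\otimes C+C\otimes C_0)\otimes C$, because $C_0$ is a subcoalgebra. Since $C$ is flat, the kernel of $\bar\Delta\otimes 1$, where $\bar\Delta\colon C\to C/C_{n-1}\otimes C/C_0$, is $C_n\otimes C$. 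Hence $\Delta z\in C_n\otimes C$. Symmetrically, using that $C_{n-1}$ is a subcoalgebra, $\Delta z\in C\otimes C_n$. Then \ref{45} gives $(C_n\otimes C)\cap(C\otimes C_n)=C_n\otimes C_n$.

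Your route stays over $R$, does not need 3), and shows more generally that the wedge of two pure subcoalgebras of a flat coalgebra is a pure subcoalgebra. The descent route additionally transports the stronger property $\Delta(C_n)\subset\sum_i C_i\otimes C_{n-i}$ from the field case. The paper relies on that property later, e.g.\ in the reduction mod $p$ argument of Section 5.

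\textbf{The base case of 3).} This step should not be left as \emph{plausible}. Given $Gr(C)\cong Gr(C\otimes K)$, the identity $(C\otimes K)_0=C_0\otimes K$ says exactly that $C\otimes K$ is pointed. You cannot take this from \ref{Kisok}, because its proof invokes the very lemma you are proving. Starting the wedge filtration over $K$ at $K\,Gr(C\otimes K)$ does not avoid the issue either: 3) then becomes a different statement, and the exhaustiveness theorem you need in 4) requires the starting term to contain the coradical. The paper also passes over this point silently.

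The missing argument is short. Let $S\subset C\otimes K$ be a simple subcoalgebra and put $T=S\cap C$. Then $T$ is pure in $C$ with $T\otimes K=S$. Since $T\otimes T$ is pure in $C\otimes C$, you get $\Delta(T)\subset(C\otimes C)\cap(S\otimes_K S)=T\otimes T$. So $T$ is a nonzero pure subcoalgebra, and by \ref{irreducible rules}(2) it contains a pure simple subcoalgebra. By pointedness this is $Rg$ with $g$ group-like. Hence $Kg\subset S$, and simplicity forces $S=Kg$.
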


\begin{proof}
	To see that 1) holds,	we do an induction on $i$. The case $i=0$ is settled in \ref{group-like is direct}. Suppose the assertion holds true for all $i<n$. Consider 
	$$C_n = kernel (C\to C/C_{n-1}\otimes C/C_0)$$
	Since $C_0$ and $C_{n-1}$ are pure, the tensor product on the right is flat. So the image of $C\to C/C_{n-1}\otimes C/C_0 $ is flat as well. This shows that $C_n $ is pure in $C$.\\
	

	Since the composition $$C_i \to C_{i+1} \to C$$ is pure by 1),
	the same holds for  $C_i \to C_{i+1}$. That the $C_i$ are a filtration of coalgebras and the inclusions are coalgebra morphisms is a consequence of \ref{CCK} and the fact that the statement holds over fields \cite[proposition 2.4.2 (e)]{Rad}. The assertion (2) is proved\\
	For 3) we do again an induction on $i$. The case $i=0$   follows from the equality $$Gr(C)=Gr(C\otimes K)$$ shown in \ref{Gr = GR_k}.

	Suppose the statement is true for all $i\leq n$.  Then  there is an isomorphism over $K$
	$$   (C/C_n \otimes C/C_0 )\otimes K \cong (C\otimes K) / (C\otimes K)_n \otimes ( C \otimes  K)/ (C\otimes K)_0$$
	Now, tensoring the  exact sequence of flat modules 
	$$C_{n+1}\to C \to C/C_n \otimes C/C_0 $$ with K, shows that there is an isomorphism 
	$$C_{n+1}\otimes K \cong (C\otimes K)_{n+1} $$ as needed.\\
	
	For coalgebras over fields, the corresponding statement to 4) is proved in Sweedler`s book. We reduce to it as follows.
	For $x\in C$ there is $i$ such that $$x\otimes 1 \in (C\otimes K)_i = C_i \otimes K $$ where the last equation holds  by 3). So there is $r\in R$ with $rx\in C_i$. But $C_i$ is pure in $C$ so we find $x\in C_i$.
\end{proof}

\begin{lemma}\label{CCK}
	\begin{it} Let $\{V_n \}^{\infty}_{n=0}$ be a filtration of submodules of  $C\in \mathcal{CA}^{flat}$ by pure objects with flat quotients $V_n /V_{n+1}$.  Assume  that  $\{V_n \otimes K \}^{\infty}_{n=0}$ is filtration of subcoalgebras of $C\otimes K$. Then  $\{V_n \}^{\infty}_{n=0}$ is already a filtration of subcoalgebras.
	\end{it}
\end{lemma}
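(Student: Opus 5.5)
We need to show $\Delta(V_n)\subset \sum_{i=0}^n V_{n-i}\otimes V_i$ inside $C\otimes C$. The idea is to reduce to the rational statement using flatness and purity.

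First we show that $W_n := \sum_{i=0}^n V_{n-i}\otimes V_i$ is a pure submodule of $C\otimes C$. Since the $V_j$ form an increasing chain of pure submodules with flat successive quotients, each $V_j$ is flat. Because $R$ is a principal ideal domain, an increasing chain of pure submodules of a flat module can be split stepwise after tensoring. More precisely, $W_n$ fits into the filtration $U_n$ of $C\otimes C$ already used in the proof of the group-like lemma. The quotient $W_n/W_{n-1}$ is identified with $\bigoplus_{i} V_{n-i}/V_{n-i-1}\otimes V_i/V_{i-1}$, which is flat. By induction on $n$, together with the fact that extensions of flat modules are flat, $(C\otimes C)/W_n$ is flat. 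To set this up, filter $(C\otimes C)/W_n$ by the images of $V_a\otimes V_b$. Each graded piece is a tensor product of the flat quotients $V_a/V_{a-1}$ or $C/V_\infty$-type pieces, hence flat. A directed colimit of flat modules is flat, so the total quotient is flat, and therefore $W_n$ is pure in $C\otimes C$. This is the step I expect to be the main obstacle: one must check carefully that the filtration quotients of $C\otimes C$ modulo $W_n$ really are tensor products of flat quotients. Alternatively, one can argue with Lemma~\ref{45} and Lemma~\ref{pure mod p}: reduce mod $p$ and use that over the field $R/pR$ the sum $\sum (V_{n-i}\otimes V_i)\otimes R/pR$ injects into $(C\otimes C)\otimes R/pR$, which holds because each $V_j\otimes R/pR\to C\otimes R/pR$ is injective by purity.

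Next we use the hypothesis over $K$. For $x\in V_n$, the hypothesis gives
$$(\Delta\otimes K)(x\otimes 1)\in \sum_i (V_{n-i}\otimes K)\otimes_K (V_i\otimes K)=W_n\otimes K$$
inside $(C\otimes C)\otimes K$, using flatness to identify the tensor products. Since $C\otimes C$ is flat, hence torsion free, it embeds into $(C\otimes C)\otimes K$. So there is a nonzero $r\in R$ with $r\Delta(x)\in W_n$.

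Purity of $W_n$ (the RD-property $rW_n=r(C\otimes C)\cap W_n$) together with torsion-freeness then gives $\Delta(x)\in W_n$. This proves that $\{V_n\}$ satisfies the filtration condition of Definition~\ref{filcoalg}. The remark following that definition, or the same argument applied to $\Delta(V_n)\subset V_n\otimes V_n$, which is pure by Lemma~\ref{45}, shows that each $V_n$ is a subcoalgebra.
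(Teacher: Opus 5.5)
Your proof is correct and follows essentially the paper's route. You get purity of $W_n=\sum_i V_{n-i}\otimes V_i$ (the paper's $U_n$) from the flat graded pieces of Proposition~\ref{oo}, and then detect $\Delta(x)\in W_n$ after $-\otimes K$ using torsion-freeness of the quotient. Your insistence on flatness of the whole quotient $(C\otimes C)/W_n$ (via the flat graded pieces, the directed union, and purity of $V_\infty\otimes V_\infty$) is in fact what the argument needs: the paper projects to $U_{n+1}/U_n$, which tacitly assumes $\Delta(d)\in U_{n+1}$.

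Only your parenthetical mod-$p$ alternative is incomplete as stated. Injectivity of each $V_j\otimes R/pR\to C\otimes R/pR$ does not by itself give injectivity of $W_n\otimes R/pR\to (C\otimes C)\otimes R/pR$, which is precisely what the graded-piece computation supplies. So rely on your first route.
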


\begin{proof} We have to show that $$\Delta (V_n) \subset \sum_{i=0}^n V_{n-i}\otimes V_i .$$
By \ref{oo} the filtration given by
$$ U_n  = \sum_{i=0}^n V_{n-i}\otimes V_i $$	is by pure submodules with $U_n$ pure in $U_{n+1}$. 
	
	 Consider the short exact sequence 
	$$ U_n \rightarrowtail  U_{n+1} \twoheadrightarrow ( U_{n+1} /U_n) $$ where the factor on the right is flat by purity.
	Next, contemplate the commuting diagram with short exact rows:\\
	
	\xymatrix{ U_n \ar[d]\ar@{>->}[rr]^{} &&
		U_{n+1} \ar[d]^{}\ar@{->>}[rr]^-{} && ( U_{n+1} /  U_n )\ar[d] 
		\\
		( U_n)\otimes K   \ar@{>->}[rr] &&    ( U_{n+1})\otimes K 
		\ar@{->>}[rr]^-{} && ( U_{n+1} / U_n )\otimes K)}

	If $\Delta (d)	\notin  U_n$ for some $d\in V_n$, then the projection of $\Delta (d)$ to $(( U_{n+1} / U_n ) )$ would be non trivial.
	But then it would be non trivial in $(( U_{n+1} / U_n ) )\otimes K$ which contradicts our assumption.

\end{proof}

\begin{definition} Let $C\in \mathcal{CA}_*^{flat}$ be
	 irreducible with unique $g\in Gr(C)$.
	 The submodule of primitive elements in $C$ is defined as:
	 $$Pr(C)=\{x\in C|\Delta(x)=g\otimes x+x\otimes g\}$$
\end{definition}

Here is another integral generalization of a classical fact:

\begin{lemma}\begin{it}\label{pr}Let $C\in \mathcal{CA}_*^{flat}$ be
	irreducible with unique $g\in Gr(C)$.
	Then $$C_1 = C_0 \oplus Pr(C)  $$ where $C_0 = R_g$ and $C_1$ are in the pure coradical filtration.\end{it}

\end{lemma}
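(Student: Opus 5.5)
We need to prove $C_1 = C_0\oplus Pr(C)$ for $C$ pure irreducible pointed flat, with $C_0=R_g$.

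The plan is to decompose elements of $C_1$ using the counit and then identify the $\epsilon$-kernel of $C_1$ with $Pr(C)$, checking over $K$ where possible.

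\textbf{Step 1: the splitting.} Since $\epsilon(g)=1$, every $x\in C_1$ can be written uniquely as
$$x=\epsilon(x)g+(x-\epsilon(x)g).$$
Here $x-\epsilon(x)g\in C_1\cap\ker\epsilon$, because $C_0\subset C_1$. This gives the module splitting $C_1=R_g\oplus (C_1\cap \ker\epsilon)$. It therefore suffices to prove
$$C_1\cap\ker\epsilon = Pr(C).$$

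\textbf{Step 2: $Pr(C)\subset C_1\cap\ker\epsilon$.} Let $x\in Pr(C)$. Applying $\epsilon\otimes 1$ to $\Delta(x)=g\otimes x+x\otimes g$ gives $x=x+\epsilon(x)g$, so $\epsilon(x)=0$. Moreover the image of $\Delta(x)$ in $C/C_0\otimes C/C_0$ vanishes, since each term has a tensor factor $g\in C_0$. Hence $x\in C_0\wedge C_0=C_1$.

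\textbf{Step 3: $C_1\cap\ker\epsilon\subset Pr(C)$.} This is the main step. Take $x\in C_1$ with $\epsilon(x)=0$. By definition of the wedge, $\Delta(x)$ lies in the kernel of
$$C\otimes C\to C/C_0\otimes C/C_0.$$
I would identify this kernel as $C_0\otimes C+C\otimes C_0$. This holds because $C_0$ is pure, so the quotients are flat and the usual tensor computation applies. Since $C_0=Rg$, we can write
$$\Delta(x)=g\otimes a+b\otimes g$$
for some $a,b\in C$.
\begin{itemize}
\item Applying $\epsilon\otimes1$ gives $x=a+\epsilon(b)g$.
\item Applying $1\otimes\epsilon$ gives $x=\epsilon(a)g+b$.
\end{itemize}
Substituting back,
$$\Delta(x)=g\otimes x+x\otimes g-(\epsilon(a)+\epsilon(b))\,g\otimes g.$$
Applying $\epsilon\otimes\epsilon$ and using $\epsilon(x)=0$ yields $0=-(\epsilon(a)+\epsilon(b))$. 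So $\Delta(x)=g\otimes x+x\otimes g$, i.e.\ $x\in Pr(C)$.

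\textbf{Expected obstacle.} The delicate point is the kernel identification in Step 3 over $R$. If it is awkward to verify directly, I would instead reduce to $C\otimes K$ using Lemma \ref{4}(3), where the classical result holds. The descent back to $R$ then goes as follows. From $(C\otimes K)_1 \cong C_1\otimes K$, the element $\Delta(x)-g\otimes x-x\otimes g$ vanishes after tensoring with $K$. Since $C\otimes C$ is flat, hence torsion-free, it vanishes in $C\otimes C$ itself.
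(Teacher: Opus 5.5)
Your proof is correct and follows essentially the paper's route: the counit splitting $C_1=R_g\oplus(C_1\cap\ker\epsilon)$, followed by the Sweedler computation (Prop.~10.0.1), which the paper only cites and you carry out explicitly. The one difference is that you use $\Delta(C_1)\subset C_0\otimes C+C\otimes C_0$, which follows from the definition of the wedge by right exactness of $\otimes$ alone, so neither purity nor your fallback via $K$ is needed there; the paper instead uses the sharper $\Delta(C_1)\subset C_0\otimes C_1+C_1\otimes C_0$ obtained from Lemma \ref{CCK}, and your weaker inclusion already suffices for the calculation.
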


\begin{proof} First, it is clear that $$Pr(C) \subset C_1.$$ We have $$ Pr(C)\subset kernel(\epsilon )$$ and $$\epsilon (g)=1.$$ so there is a direct sum decomposition of modules
	$$C_1 \cong C_0 \oplus P_1. $$
	We have seen that $$\Delta (C_1)\subset C_0 \otimes C_1 + C_1\otimes C_0 = R_g \otimes C_1 + C_1\otimes R_g .$$
	The remaining argument, which is an easy calculation, is word for word the one in \cite[proposition 10.0.1.]{S}.
	\end{proof}

\section{\large\textbf{Categorical properties of pointed coalgebras}}
 
 


\begin{lemma}\label{cocomplete}\begin{it} The category $\mathcal{CA}_*^{flat}$ 
		is cocomplete.\end{it}\end{lemma}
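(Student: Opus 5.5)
To prove that $\mathcal{CA}_*^{flat}$ is cocomplete, the plan is to show it is closed under colimits in the ambient category $\mathcal{CA}^{flat}$, which is cocomplete by \cite{St}. It suffices to treat coproducts and coequalizers. In each case I will take the colimit in $\mathcal{CA}^{flat}$ and check that it is again pointed. There is also the initial object $R(\emptyset)=0$, which is trivially pointed since it has no pure simple subcoalgebras.

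\textbf{Coproducts.} In $\mathcal{CA}^{flat}$ the coproduct of a family $\{C_i\}$ is the direct sum $\oplus_i C_i$, with the componentwise comultiplication. This module is flat, and each $C_i$ is a pure (indeed split) subcoalgebra. To see that $\oplus_i C_i$ is pointed, let $S$ be a pure simple subcoalgebra. By Lemma \ref{irreducible rules}(1), $S$ has finite rank, so $S$ lies in a finite subsum. Lemma \ref{1234} then gives $S\subset C_i$ for some $i$. Since $S$ is pure in $\oplus_i C_i$, it is pure in $C_i$, and it is simple there as well. Pointedness of $C_i$ therefore gives $S\cong R$.

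\textbf{Coequalizers.} For $f,g\colon C\to D$ in $\mathcal{CA}_*^{flat}$, I will use the coequalizer $Q$ formed in $\mathcal{CA}^{flat}$. The expectation, from \cite{St}, is that this is the flat reflection of the naive quotient $D/I$, where $I$ is the coideal generated by $f(x)-g(x)$, obtained by dividing $D/I$ by its torsion. The key point is that the canonical map $D\to Q$ is surjective.

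Lemma \ref{777} then applies directly. Because $D$ is pointed and $D\to Q$ is a surjection in $\mathcal{CA}^{flat}$, every pure simple subcoalgebra of $Q$ is a group-like coalgebra, hence isomorphic to $R$. So $Q$ is pointed. This also covers arbitrary colimits written as a quotient of a coproduct, and in particular filtered colimits.

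Finally, $\mathcal{CA}_*^{flat}$ is a full subcategory of $\mathcal{CA}^{flat}$. Since colimits computed in the larger category land in it, the universal property holds in $\mathcal{CA}_*^{flat}$ as well.

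\textbf{Main obstacle.} The step I expect to be hardest is confirming that colimits in $\mathcal{CA}^{flat}$ are surjective images of coproducts. If the flat coequalizer in \cite{St} is not a quotient of $D$, but is instead constructed via a cofree-type adjoint, surjectivity fails and Lemma \ref{777} cannot be applied directly. In that case the fallback is to build the colimit inside $\mathcal{CA}_*^{flat}$ itself: form the quotient of $D$ by the pure closure of the coideal generated by $f-g$. This is flat because the submodule is pure, and it is a coalgebra by the purification arguments of Lemma \ref{basic}. One then checks its universal property against maps into flat targets, which factor through the torsion-free quotient.
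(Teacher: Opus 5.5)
Your argument is essentially the paper's proof. The paper treats direct sums as clear and then handles pushouts, where you handle coequalizers; given coproducts the two are interchangeable. In both cases the colimit is a surjective image of a direct sum of pointed coalgebras, and Lemma \ref{777} gives pointedness. Your surjectivity worry is settled by the paper's remark that colimits in $\mathcal{CA}^{flat}$ are the $R$-module colimits taken modulo torsion, which is exactly your fallback construction.

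One caution about the direct-sum step. The paper's proof of Lemma \ref{1234} already assumes that $S$ has rank one and contains a group-like element, so citing it to prove pointedness is circular in substance. You can argue directly instead. For $s\in S\subset\oplus_j C_j$ one has $\pi_i(s)=(\mathrm{id}\otimes\epsilon\pi_i)\Delta(s)\in S$, so $S=\oplus_i(S\cap C_i)$, and each summand is a pure subcoalgebra of $S$. Pure simplicity then forces $S\subset C_i$ for a single $i$.
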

\begin{proof} Colimits in  flat coalgebras are defined by taking first the colimit in all $R$-modules followed by projection modulo torsion. It is enough to see that any colimit of pointed coalgebras in $\mathcal{CA}^{flat}$ is in $\mathcal{CA} ^{flat}_* $. This is clear for direct sums. For pushouts, let 
	\[
	\xymatrix{
		E \ar[r]\ar[d]_{} & F \ar[d]^{} \\
		D \ar[r]^{}   & F\oplus_{E} D}\]
	be a pushout diagram of pointed flat coalgebras.
	Consider the surjective morphism 
	$$p:F\oplus D \to  F\oplus_{E} D.$$
	Since $E\oplus D$ is pointed.
	the assertion follows from \ref{777}.
	

\end{proof}

The next result is needed, in order to see that $\mathcal{CA}^{flat}_*$ is closed under the tensor product.

\begin{proposition}\label{oo}\begin{it} Let $C, D\in \mathcal{CA}^{flat}$ and $\{C_n \}^{\infty}_{n=0}$ , $\{D_n \}^{\infty}_{n=0}$ be filtrations of $C,D$ by pure subobjects. \begin{itemize}
			\item 
		Then $\{U_n \}^{\infty}_{n=0}$ with $$ U_n = \sum_{i+j=n}C _i \otimes D_j $$ is  a pure filtration of $C\otimes D$.
		\item There is an isomorphism
	$$U_n / U_{n-1}\cong C_n / C_{n-1}\otimes D_0 \oplus C_{n-1}/ C_{n-2} \otimes D_1 /D_0 \oplus \ldots C_0 
	 \otimes D_n /D_{n-1} .$$
 \item If the filtrations $\{C_n \}^{\infty}_{n=0}$ , $\{D_n \}^{\infty}_{n=0}$ are exhaustive then so is $\{U_n \}^{\infty}_{n=0}$. \end{itemize}\end{it}
\end{proposition}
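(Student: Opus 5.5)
We reduce everything to a statement about flat modules and finitely many indices, using that over a principal ideal domain submodules of flat modules are flat and that purity can be tested after reduction mod primes (Lemma \ref{pure mod p}) or by flatness of cokernels. Throughout, $C_n$ pure in $C$ forces $C_n$ pure in $C_{n+1}$ and all quotients $C_n/C_{n-1}$ flat (pure quotients of flat modules are flat), and similarly for $D$. We adopt the conventions $C_{-1}=D_{-1}=0$.

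The first step is to establish the graded decomposition (the second bullet), since purity will follow from it. We work first with a single finite stage. Over a principal ideal domain, a pure submodule of a flat module whose quotient is flat need not be a direct summand. So we use the description of pure monomorphisms as directed colimits of split monomorphisms, or more concretely argue via $-\otimes K$ and mod $p$. For fixed $n$, we consider the finite flag $C_0\subset\dots\subset C_n$ and $D_0\subset\dots\subset D_n$. Tensor products of pure monomorphisms of flat modules are pure. Hence each $C_i\otimes D_j\to C\otimes D$ is pure, and $C_i\otimes D_j\cap C_{i'}\otimes D_{j'}=C_{\min}\otimes D_{\min}$ by Lemma \ref{45}.

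Next we define a map
$$\bigoplus_{i+j=n} C_i\otimes D_j \to U_n/U_{n-1}.$$
It kills $C_{i-1}\otimes D_j$ and $C_i\otimes D_{j-1}$, so it factors through $\bigoplus_{i+j=n} C_i/C_{i-1}\otimes D_j/D_{j-1}$. Surjectivity is clear from the definition of $U_n$. For injectivity, both source and target of the factored map are compatible with base change. After tensoring with $K$, and with $R/pR$ for each prime $p$, the classical vector space statement for filtered tensor products (a splitting of flags over a field) gives an isomorphism. This requires knowing that reducing the flags mod $p$ keeps them flags of subspaces, which is exactly Lemma \ref{pure mod p}.

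With the decomposition in hand, $U_n/U_{n-1}$ is a finite direct sum of tensor products of flat modules, hence flat. Therefore $U_{n-1}\subset U_n$ is pure. To get purity of $U_n$ in $C\otimes D$, we show the reduction mod $p$ of $U_n\to C\otimes D$ is injective and apply Lemma \ref{pure mod p}. Injectivity follows by induction on $n$ and the five lemma, comparing with the analogous filtration of $(C/pC)\otimes(D/pD)$, where it holds over the field $R/pR$.

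The third bullet is immediate: any $x\in C\otimes D$ is a finite sum of tensors $c\otimes d$ with $c\in C_i$ and $d\in D_j$, so $x\in U_{\max(i+j)}$. The main obstacle is the injectivity in the graded isomorphism. Intersections of sums such as $U_{n-1}\cap (C_i\otimes D_j)$ are not controlled by Lemma \ref{45} alone, which is why we pass to fields via $K$ and $R/pR$ rather than compute directly.
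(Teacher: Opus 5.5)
Your argument is correct, but it takes a different route from the paper's.

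The paper computes $U_n/U_{n-1}$ by hand. Lemma \ref{45} identifies $(C_{s+1}\otimes D_t)\cap(C_s\otimes D_{t+1})$ with $C_s\otimes D_t$, so the pushout of two neighbouring terms is their sum. The summands $C_{n-s-1}\otimes D_{s+1}$ are then adjoined one at a time. Each step needs the intersection of $C_{n-s-1}\otimes D_{s+1}$ with a partial sum of earlier terms. The paper asserts this equals $C_{n-s-1}\otimes D_{s-1}$, which goes beyond what Lemma \ref{45} provides, and then reads off purity from flatness of the graded pieces.

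You instead write down the comparison surjection $\phi\colon\bigoplus_{i+j=n}C_i/C_{i-1}\otimes D_j/D_{j-1}\to U_n/U_{n-1}$ and prove injectivity in one stroke over $K$, where adapted bases settle it. This avoids exactly the intersection-with-a-sum step you flag at the end. Lemma \ref{45} is itself proved by reducing to $K$, so your argument performs that reduction once, globally, instead of intersection by intersection. It is cleaner and more rigorous precisely where the paper is sketchy.

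Some smaller points:

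\begin{enumerate}
\item State explicitly what carries the injectivity. Since $K$ is flat, $\ker(\phi)\otimes K=\ker(\phi\otimes K)=0$, so $\ker\phi$ is torsion, hence zero because the source is torsion free. The reductions mod $p$ are not needed here, and on their own would not suffice: over $\mathbb{Z}$, the surjection $\mathbb{Q}\to\mathbb{Q}/\mathbb{Z}$ is an isomorphism mod every prime.
\item Your mod-$p$ five-lemma induction for purity of $U_n$ in $C\otimes D$ is valid once $\phi$ is an isomorphism. A shorter route, closer to what the paper intends, uses three facts: each $U_n\subset U_{n+1}$ is pure; transfinite compositions of pure monomorphisms are pure; and $\bigcup_n U_n=C_\infty\otimes D_\infty$ is pure in $C\otimes D$, where $C_\infty=\bigcup_n C_n$ and $D_\infty=\bigcup_n D_n$.
\item If \emph{filtration} is meant in the sense of Definition \ref{filcoalg}, add the one-line check $\Delta(U_n)\subset\sum_{k=0}^{n}U_{n-k}\otimes U_k$. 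It follows directly from the corresponding conditions on the $C_i$ and $D_j$, and the paper omits it as well.
\end{enumerate}
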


\begin{proof}

	Note that $U_n$ is the sum of $C_s \otimes D_t$ with $s+t \leq n$, and that 
	$$ C_s \otimes D_t \to C_i \otimes D_j $$
	are all  pure morphisms. Consider the pushout diagram
		\[
	\xymatrix{
		C_s \otimes D_t \ar[r]\ar[d]_{} & C_{s+1}\otimes D_t \ar[d]^{} \\
		C_s \otimes D_{t+1} \ar[r]^{}   & C_{s+1}\otimes D_t\oplus_{	C_s \otimes D_t} 	C_s \otimes D_{t+1} }\]
	By what we said above, the morphism 
	$$	C_s \otimes D_t \to  C_{s+1}\otimes D_t\oplus_{	C_s \otimes D_t} 	C_s \otimes D_{t+1}$$
	is pure. Moreover, by \ref{45} there is an isomorphism 
	$$	C_s \otimes D_t \cong (  C_{s+1}\otimes D_t )\cap (	C_s \otimes D_{t+1}) .$$ Hence, the pushout above is the sum of $C_{s+1}\otimes D_t$ and $C_{s}\otimes D_{t+1}$. Now consider:
	
		\[
	\xymatrix{
		C_n \otimes D_0 + C_{n-1}\otimes D_1 &+& C_{n-2} \otimes D_2  &\ldots&  C_1 \otimes D_{n-1}+C_0 \otimes D_n\\
		C_{n-1} \otimes D_{0} \ar[u]^{}&+& C_{n-2}\otimes D_1 \ar[u]\ar[ull]&\ldots &  C_0 \otimes D_{n-1}\ar[u]} 
	\]
	
	The quotient of the first arrow on the left is isomorphic to 
	$$(C_n /C_{n-1})\otimes D_0 \oplus C_{n-1}\otimes (D_1 / D_0) .$$
	
	Now $C_{n-2}\otimes D_1 /C_{n-2} \otimes D_0$ maps to $$(C_n /C_{n-1})\otimes D_0 \oplus C_{n-1}\otimes (D_1 / D_0) +  (C_{n-2} \otimes D_2 / C_{n-2} \otimes D_0 )\cong$$
	$$(C_n /C_{n-1})\otimes D_0 \oplus C_{n-1}\otimes (D_1 / D_0) + (C_{n-2} \otimes D_2 / D_0 )$$
	The resulting quotient is isomorphic to 
	$$(C_n /C_{n-1})\otimes D_0 \oplus C_{n-1}\otimes (D_1 / D_0) \oplus C_{n-2} \otimes (D_2 / D_1 ) .$$
	
	In an induction, we may continuing this argument as follows.
	Suppose that for some $s$ there is an isomorphism
	$$ U_{n,s} \cong \sum_{\stackrel{i+j=n}{j\leq s}}C _i \otimes D_j /\sum_{\stackrel{i+j=n-1}{j\leq s-1}}C _i \otimes D_j \cong   (C_n / C_{n-1})\otimes D_0 \oplus (C_{n-1}/ C_{n-2}) \otimes (D_1 /D_0 )\oplus \ldots C_{n-s } \otimes (D_s / D_{s-1}) $$
	We may assume that $s<n$. Consider  
	$$U_{n,s}+C_{n-s-1}\otimes D_{s+1} /(( C_{n-s-1}\otimes D_{s+1})\cap (\sum_{\stackrel{i+j=n-1}{j\leq s-1}}C _i \otimes D_j )) \cong $$ 
	$$U_{n,s}+C_{n-s-1}\otimes D_{s+1} /( C_{n-s-1}\otimes D_{s-1}) \cong  U_{n,s}+ C_{n-s-1}\otimes D_{s+1}/D_{s-1}$$ We have to take the quotient by $C_{n-s-1} \otimes D_{s} $. This quotient is isomorphic to 
	$$ (C_n / C_{n-1})\otimes D_0 \oplus( C_{n-1}/ C_{n-2}) \otimes(D_1 /D_0  ) \oplus \ldots (C_{n-s }/C_{n-1-s})  \otimes (D_s / D_{s-1}) \oplus  C_{n-s-1} \otimes (D_{s+1}/D_s )\cong U_{n,s+1}$$ which is what we want.
	 We arrive at the last  assertion.
	Since all the factors are  flat the assertion of the purity follows. \\
	The last assertion follows from the commutation of tensor products with filtered colimits.
	
\end{proof}

\begin{corollary}\label{789}\begin{it}
	Let $C, D\in \mathcal{CA}_*^{flat}$ and $\{C_n \}^{\infty}_{n=0}$ , $\{D_n \}^{\infty}_{n=0}$ and $\{(C\otimes D)_n  \}^{\infty}_{n=0}$  be the pure coradical filtrations.
	Then\\

	$$ (C\otimes D)_0 = C_0 \otimes D_0 $$.\\
	 Moreover, if $E \subset C\otimes D$ is a pure simple subcoalgebra. Then $$E\subset F\otimes G$$ where $F,G$ are pure simple subcoalgebras of $C$ and $D$ respectively.\end{it}
\end{corollary}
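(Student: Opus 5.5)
We have to prove Corollary \ref{789}: for pointed $C,D$ the pure coradical of $C\otimes D$ is $C_0\otimes D_0$, and every pure simple subcoalgebra of $C\otimes D$ lies in some $F\otimes G$. The plan is to build an exhaustive $\wedge$-filtration of $C\otimes D$ by pure subcoalgebras out of the two pure coradical filtrations, and then apply Lemma \ref{999}.

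\textbf{Step 1: the product filtration.} Set
\[
U_n=\sum_{i+j=n}C_i\otimes D_j .
\]
By Lemma \ref{4}, $\{C_n\}$ and $\{D_n\}$ are exhaustive filtrations of $C$ and $D$ by pure subcoalgebras. Proposition \ref{oo} then says $\{U_n\}$ is an exhaustive filtration of $C\otimes D$ by pure submodules, with $U_0=C_0\otimes D_0$. The $U_n$ are subcoalgebras: the comultiplication of $C\otimes D$ is the twisted tensor product of $\Delta_C$ and $\Delta_D$, and each $\Delta(C_i)$, $\Delta(D_j)$ is compatible with its filtration.

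\textbf{Step 2: the $\wedge$-condition $U_n\subset U_{n-1}\wedge U_0$.} I expect this to be the main obstacle. I would reduce to the field case $K$ in the style of Lemma \ref{CCK}. Both $U_n$ and $U_{n-1}\wedge U_0$ are pure in $C\otimes D$; the latter is pure because it is the kernel of a map into the flat module $(C\otimes D)/U_{n-1}\otimes (C\otimes D)/U_0$. So it suffices to check the inclusion after tensoring with $K$: if $rx\in U_{n-1}\wedge U_0$ with $r\neq 0$, then $x\in U_{n-1}\wedge U_0$ by purity. By Lemma \ref{4}(3), $U_n\otimes K=\sum_{i+j=n}(C\otimes K)_i\otimes(D\otimes K)_j$. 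Over a field this sum is contained in $(U_{n-1}\otimes K)\wedge(U_0\otimes K)$, which is the standard fact that the tensor product of coradical filtrations is a $\wedge$-filtration (Sweedler/Radford). Purity of the $U_i$ gives $(U_{n-1}\wedge U_0)\otimes K=(U_{n-1}\otimes K)\wedge(U_0\otimes K)$, which completes the reduction.

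\textbf{Step 3: consequences.} Lemma \ref{999} now shows that every pure simple subcoalgebra $E\subset C\otimes D$ lies in $U_0=C_0\otimes D_0$. Since $C$ and $D$ are pointed, $C_0=RGr(C)$ and $D_0=RGr(D)$, so $C_0\otimes D_0$ is free on the group-like elements $g\otimes h$. Each $R(g\otimes h)$ is pure (a direct summand of a pure submodule) and is simple of rank one, which gives $C_0\otimes D_0\subset(C\otimes D)_0$.

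For the reverse inclusion and the second claim, note that $E$ has rank one by Lemma \ref{irreducible rules}. It is therefore generated by a group-like element $x$ by Lemma \ref{g generates}, and $x$ is group-like in the grouplike coalgebra $R[Gr(C)\times Gr(D)]$. By Theorem \ref{group-like is direct}, applied to this free coalgebra, linear independence forces $x=g\otimes h$. Hence
\[
E=R(g\otimes h)=F\otimes G,\qquad F=Rg,\ G=Rh,
\]
and $(C\otimes D)_0=C_0\otimes D_0$.
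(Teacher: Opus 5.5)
Your proof of the first assertion follows the paper's route: the product filtration of Proposition \ref{oo} fed into Lemma \ref{999}. The inclusion $(C\otimes D)_0\subset C_0\otimes D_0$ is already immediate from ``every pure simple $E$ lies in $U_0$'', so the equality is complete before your last paragraph. Your Step 2 is also more work than needed. Once $\{U_n\}$ is a coalgebra filtration, $\Delta(U_n)\subset\sum_k U_{n-k}\otimes U_k\subset U_{n-1}\otimes(C\otimes D)+(C\otimes D)\otimes U_0$, which is exactly $U_n\subset U_{n-1}\wedge U_0$; no passage to $K$ is required. The paper does not check this $\wedge$-condition at all, so including it is good.

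The gap is in the second assertion. Lemma \ref{irreducible rules} only says that a pure simple subcoalgebra has \emph{finite} rank, not rank one. That every pure simple $E\subset C\otimes D$ has rank one, and so by Lemma \ref{g generates} is spanned by a grouplike, is exactly the pointedness of $C\otimes D$. The paper deduces that from this very corollary (Corollary \ref{pointed}), so you cannot quote it here. Without it, your appeal to Lemma \ref{g generates} and to linear independence of grouplikes has nothing to act on.

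The fix uses only what you already have. $E$ is pure in $C_0\otimes D_0=R[X]$, where $X=Gr(C)\times Gr(D)$.
\begin{enumerate}
\item Pick $0\neq e=\sum_y r_y\,y\in E$ and $x\in X$ with $r_x\neq 0$.
\item Apply $\mathrm{id}\otimes x^*$ to $\Delta(e)=\sum_y r_y\,y\otimes y\in E\otimes E$. This gives $r_x x\in E$, and purity gives $x\in E$.
\item $Rx$ is a direct summand of the pure submodule $R[X]$, so it is a nonzero pure subcoalgebra of $E$.
\item Pure simplicity then gives $E=Rx=Rg\otimes Rh$, where $x=g\otimes h$.
\end{enumerate}
The paper reaches the same conclusion by applying Lemma \ref{1234} to $C_0\otimes D_0=\sum_{g,h}R(g\otimes h)$. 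However, that lemma's proof also takes the rank-one property for granted, so an explicit argument like this one is worth writing out either way.
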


\begin{proof}
First,  $$ (C\otimes D)_0 \subset C_0 \otimes D_0 $$.follows from \ref{oo} and \ref{999}.
Note that $C_0 \otimes D_0 $ is the sum of $S\otimes T$ with $S,T$ pure simple in $C$ and $D$ respectively.  The other inclusion follows from the second assertion which is a consequence of \ref{1234}.
\end{proof}

The next corollary follows from \ref{789}.

\begin{corollary}\label{pointed}\begin{it}Suppose $C,D\in \mathcal{CA}_*^{flat}$. Then $C\otimes D$ is pointed.\\
	 Moreover, $$G(C\otimes D) = G(C)\otimes G(D)$$ holds true where $G(C)\otimes G(D):=\{ c\otimes d|c\in G(C), d\in G(D) \}$.\end{it}
\end{corollary}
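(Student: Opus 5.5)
The claim follows from Corollary~\ref{789}, so the plan is to reduce both assertions to the pure coradical of $C\otimes D$. Note first that $C\otimes D$ is flat, and it is a cocommutative coassociative coalgebra with the usual tensor coalgebra structure. Hence $C\otimes D\in\mathcal{CA}^{flat}$, and the only issue is pointedness together with the identification of the group-like elements.

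For pointedness, let $E\subset C\otimes D$ be a pure simple subcoalgebra. By the second part of Corollary~\ref{789}, $E\subset F\otimes G$, where $F\subset C$ and $G\subset D$ are pure simple. Since $C$ and $D$ are pointed, $F\cong R$ and $G\cong R$, generated by group-like elements $g\in Gr(C)$ and $h\in Gr(D)$. Then $F\otimes G=R(g\otimes h)$ is free of rank one. It is pure in $C\otimes D$, because $C_0\otimes D_0=RGr(C)\otimes RGr(D)$ is pure in $C\otimes D$ by Theorem~\ref{group-like is direct} and flatness, and $R(g\otimes h)$ is a direct summand of it. Now $E$ is a nonzero pure submodule of the rank one pure module $F\otimes G$. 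A nonzero pure submodule of a rank one free module is the whole module, since the quotient is torsion and flat, hence zero. So $E=R(g\otimes h)\cong R$, and $C\otimes D$ is pointed.

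For the group-like elements, the remark following the definition of the pure coradical gives $(C\otimes D)_0=RGr(C\otimes D)$, because $C\otimes D$ is now known to be pointed. The first part of Corollary~\ref{789} gives
\[
(C\otimes D)_0=C_0\otimes D_0=RGr(C)\otimes RGr(D)=R\{g\otimes h\mid g\in Gr(C),\,h\in Gr(D)\}.
\]
Clearly each $g\otimes h$ is group-like in $C\otimes D$, since $\Delta(g\otimes h)=(g\otimes h)\otimes(g\otimes h)$ after the twist and $\epsilon(g\otimes h)=1$. This gives $G(C)\otimes G(D)\subset G(C\otimes D)$.

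For the reverse inclusion, take a group-like $x\in C\otimes D$. It lies in the free module $(C\otimes D)_0$, which has basis $\{g\otimes h\}$. By Theorem~\ref{group-like is direct}, $Gr(C\otimes D)$ is also a basis of this same module. The elements $g\otimes h$ are group-like and already form a basis, so $x$ is an $R$-linear combination of them. Linear independence of distinct group-like elements (the first part of the proof of Theorem~\ref{group-like is direct}, applied to $\{x\}\cup\{g\otimes h\}$) then forces $x$ to equal one of the $g\otimes h$.

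The main obstacle is not this corollary itself but its reliance on the second assertion of Corollary~\ref{789}. The only genuinely new input needed here is the purity and rank-one argument identifying $E$ with $F\otimes G$.
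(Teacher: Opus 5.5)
Your proposal is correct and follows the same route as the paper, whose entire proof is the remark that the statement follows from Corollary~\ref{789}. You fill in the missing details, namely purity of $R(g\otimes h)$ as a summand of the pure submodule $C_0\otimes D_0$, the rank-one argument forcing $E=R(g\otimes h)$, and linear independence of group-likes for the reverse inclusion, and these all check out.
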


\begin{lemma} \begin{it}The category $\mathcal{CA}^{flat}_*$ is locally presentable.\end{it}\end{lemma}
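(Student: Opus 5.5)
We will show that $\mathcal{CA}^{flat}_*$ is cocomplete and is generated under filtered colimits by a set of $\kappa$-presentable objects; this is one of the standard characterizations of local presentability (Adámek--Rosický). Cocompleteness is Lemma \ref{cocomplete}. The prior fact to lean on is that $\mathcal{CA}^{flat}$ is locally presentable \cite{St}, which comes from Theorem \ref{colimits}: every flat coalgebra is the filtered colimit of its pure subcoalgebras of finite rank.

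\textbf{Generating set.} Over a principal ideal domain a flat module of finite rank is countably generated; it embeds in $K^n$. A coalgebra structure on such a module is specified by a countable amount of data. So, up to isomorphism, the pure subcoalgebras of finite rank of objects of $\mathcal{CA}^{flat}$ form a set; call it $\mathcal{G}$. Let $\mathcal{G}_*\subset\mathcal{G}$ be the pointed members. For $C\in\mathcal{CA}^{flat}_*$, every pure subcoalgebra $D\subset C$ is again pointed, since a pure simple subcoalgebra of $D$ is pure simple in $C$ (argue as in Lemma \ref{important}). Theorem \ref{colimits} then writes $C$ as a filtered colimit of objects of $\mathcal{G}_*$.

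\textbf{Colimits and presentability.} We must check that this colimit is also the colimit in $\mathcal{CA}^{flat}_*$. It suffices that $\mathcal{CA}^{flat}_*\subset\mathcal{CA}^{flat}$ is closed under filtered colimits, and the proof of Lemma \ref{cocomplete} shows it is closed under all colimits. Concretely, a filtered colimit is a quotient of a direct sum of pointed coalgebras, so Lemma \ref{777} applies. Consequently $\mathcal{CA}^{flat}_*$ is a full subcategory closed under colimits, and $\mathrm{Hom}_{\mathcal{CA}^{flat}_*}(G,-)$ agrees with $\mathrm{Hom}_{\mathcal{CA}^{flat}}(G,-)$. Each $G\in\mathcal{G}_*$ is $\aleph_1$-presentable in $\mathcal{CA}^{flat}$: it is countably presented as a module, the coalgebra identities are countably many equations, and filtered colimits in flat coalgebras are computed on underlying modules (filtered colimits of flat modules are flat). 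It is therefore $\aleph_1$-presentable in the subcategory as well.

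\textbf{Main obstacle.} The delicate point is the presentability of the generators. Colimits in $\mathcal{CA}^{flat}$ are formed by taking module colimits and then killing torsion, and we must be sure no torsion is killed in the filtered case, so that $\aleph_1$-filtered colimits commute with $\mathrm{Hom}(G,-)$. Filtered colimits of flat modules are flat, so this should hold. A slicker route would be to present $\mathcal{CA}^{flat}_*$ as a coreflective subcategory of $\mathcal{CA}^{flat}$ closed under colimits. That requires the Special Adjoint Functor Theorem and so needs a generator anyway, so we stay with the direct argument.
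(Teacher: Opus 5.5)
Your strategy is the paper's. The paper cites \cite{St} for local presentability of $\mathcal{CA}^{flat}$, with the pure subcoalgebras of finite rank as generators, and observes that pure subcoalgebras of pointed coalgebras are pointed. You add one check the paper leaves implicit: colimits in $\mathcal{CA}^{flat}_*$ are computed in $\mathcal{CA}^{flat}$ (Lemma \ref{cocomplete}, via Lemma \ref{777}), so presentability of the generators passes to the subcategory. That check is correct. For writing $C$ itself as a colimit in $\mathcal{CA}^{flat}_*$ you do not even need closure, since $C$ already lies in the full subcategory.

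The step that fails is the cardinality bookkeeping. A flat module of finite rank embeds in $K^n$, but $K^n$ is countable only when $R$ is, and the lemma is stated for an arbitrary principal ideal domain.

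Here is a counterexample. Take $R=\mathbb{C}[x]$ and $M=R[(x-a)^{-1}: a\in\mathbb{C}]\subset K$. This $M$ is not countably generated: elements of a countably generated submodule of $K$ have poles at only countably many points altogether, whereas $(x-a)^{-1}\in M$ for every $a$. Let $C=Rg\oplus My$ with $g$ group-like, $\Delta(my)=g\otimes my+my\otimes g$ and $\epsilon(my)=0$. Then $C$ is a flat pointed coalgebra of rank $2$ (its only pure simple subcoalgebra is $Rg$), hence one of your generators. But $C$ is the $\aleph_1$-directed union of the pointed subcoalgebras $Rg\oplus M'y$ with $R\subseteq M'\subseteq M$ countably generated, and $\mathrm{id}_C$ factors through none of them. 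So $C$ is not $\aleph_1$-presentable.

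The repair is to take a regular cardinal $\lambda>|R|+\aleph_0$ (compare the cardinal fixed before Lemma \ref{pure}). Then every flat module of finite rank has cardinality $<\lambda$ and is therefore $\lambda$-presentable in $\mathbf{Mod}$. There is only a set of such coalgebras up to isomorphism, and your factorization argument goes through verbatim with $\lambda$ in place of $\aleph_1$. For the countable rings of Theorem \ref{mainmain}, your choice of $\aleph_1$ already works.
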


\begin{proof} It is proved in \cite{St}  that the category $\mathcal{CA}^{flat}$ is locally presentable. To be more precise, the generators are shown to be pure subcoalgebras of finite rank.
	Since pure subcoalgebras of pointed coalgebras are pointed this shows the claim
\end{proof}

\begin{corollary} \begin{it}The category $\mathcal{CA}^{flat}_*$ is complete.\end{it}\end{corollary}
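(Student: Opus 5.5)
The plan is to obtain completeness of $\mathcal{CA}^{flat}_*$ from the lemma just proved (local presentability) together with the cocompleteness established in \ref{cocomplete}. The standard fact is that every locally presentable category is complete; see, e.g., Ad\'amek--Rosick\'y, Corollary 1.28. So the corollary follows formally once local presentability is granted.

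It is worth making the limits explicit, since this also shows the statement does not depend on the fine print of the definition of local presentability. I would construct limits through a right adjoint to the inclusion $\iota\colon\mathcal{CA}^{flat}_*\to\mathcal{CA}^{flat}$, using that $\mathcal{CA}^{flat}$ is complete (\cite{St}). The inclusion preserves colimits by the proof of \ref{cocomplete}, where colimits of pointed flat coalgebras are computed in $\mathcal{CA}^{flat}$. Both categories are locally presentable, so by the adjoint functor theorem for locally presentable categories $\iota$ has a right adjoint $P$. The limit of a diagram $X$ in $\mathcal{CA}^{flat}_*$ is then $P(\lim \iota X)$.

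Concretely, $P(C)$ should be the sum of all pure subcoalgebras of $C$ that are pointed. I would check three things:
\begin{enumerate}
\item This sum is pure, by \ref{basic} and \ref{important}.
\item It is pointed. A pure simple subcoalgebra of a sum of pointed coalgebras lies in one summand, by the argument of \ref{1234}.
\item Every map $D\to C$ with $D$ pointed factors through $P(C)$. The image is a quotient of a pointed coalgebra, so its purification is pointed by \ref{777}.
\end{enumerate}

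The main obstacle is the second check. It needs \ref{1234} for arbitrary sums rather than only finite ones, and that reduction should go through because any pure simple subcoalgebra has rank one and so already lies in a finite subsum. For the corollary itself, however, invoking the general theorem on locally presentable categories suffices.
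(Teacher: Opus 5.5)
Your proposal is correct and follows the paper's route: the paper deduces completeness immediately from the preceding lemma that $\mathcal{CA}^{flat}_*$ is locally presentable, and its next remark computes limits exactly as you do, by applying the right adjoint $\Phi$ of the inclusion to limits in $\mathcal{CA}^{flat}$. In your optional explicit description, two citations and one step need repair, though none affects the corollary: \ref{important} assumes the ambient coalgebra is pointed, and \ref{777} speaks about images rather than purifications, so checks (1) and (3) also need the fact that the purification of a pointed subcoalgebra inside an arbitrary flat $C$ is again pointed (true because, over a principal ideal domain, pointedness can be tested after $-\otimes K$); and in check (2) you can only use that $S$ has finite rank, not rank one, to place it in a finite subsum, since rank one is what that check is trying to prove.
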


The limits in $\mathcal{CA}^{flat}_*$ can be described more concretely.
The inclusion functor of  $$\mathcal{CA}^{flat}_* \to \mathcal{CA}^{flat}$$ has a right adjoint $ \Phi$ which sends $C\in \mathcal{CA}^{flat}$ to the sum of all pointed subcoalgebras. Now limits in  $\mathcal{CA}^{flat}_*$ are computed by composing the limit in  $\mathcal{CA}^{flat}$ with $ \Phi$.


Note that for  $C,D\in \mathcal{CA}$ the tensor product $C\otimes D$ is a coalgebra in a natural way. The tensor product restricts to a functor on $\mathcal{CA}^{flat}$ and
 $\mathcal{CA}^{flat}_*$. The later holds due to \ref{pointed}.
 Moreover, this operation defines the product in all these categories. \\	 

 




   
 
 \begin{theorem}\label{Barr}\begin{it} \begin{enumerate}\item The forgetful functor \[U:\mathcal{CA}_*^{flat} \to \mathbf{Mod}^{flat}\] has a right adjoint $S_* $.
 	 Moreover, $\mathcal{CA}^{flat}_*$ is comonadic over $\mathbf{Mod}^{flat}$.
\item The category $\mathcal{CA}_*^{flat}$ is cartesian closed.\end{enumerate}\end{it}
\end{theorem}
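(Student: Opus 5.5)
Both parts will be obtained from the corresponding facts for $\mathcal{CA}^{flat}$ in \cite{St} together with the coreflection $\Phi\colon \mathcal{CA}^{flat}\to\mathcal{CA}^{flat}_*$, which sends a flat coalgebra to the sum of its pointed subcoalgebras. The approach is to show that pointed flat coalgebras form a coreflective subcategory closed under colimits and tensor products, and then to use the adjoint functor theorem and Beck's theorem.

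For (1), the existence of $S_*$ comes first. $\mathcal{CA}^{flat}_*$ is locally presentable, and by \ref{cocomplete} colimits in $\mathcal{CA}^{flat}_*$ are computed in $\mathcal{CA}^{flat}$. In turn, colimits in $\mathcal{CA}^{flat}$ are formed as module colimits followed by reduction modulo torsion, which is exactly the colimit in $\mathbf{Mod}^{flat}$. Hence $U$ preserves all colimits, and the special adjoint functor theorem for locally presentable categories gives a right adjoint $S_*$. More concretely, one can take $S_*=\Phi\circ S$, where $S$ is the cofree flat coalgebra functor of \cite{St}. The adjunction then comes from composing the two adjunctions $U\dashv S$ and $\mathrm{incl}\dashv\Phi$.

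For comonadicity, the dual Beck criterion is needed. $U$ is faithful and reflects isomorphisms, since a coalgebra map that is bijective on modules is an isomorphism of coalgebras. It remains to show that $\mathcal{CA}^{flat}_*$ has equalizers of $U$-split pairs and that $U$ preserves them. I would argue as follows. Let $f,g\colon C\rightrightarrows D$ be a pair whose module equalizer $E\to C$ is split. Then $E$ is a direct summand of $C$, hence pure. It is a subcoalgebra by the usual argument: the split equalizer is absolute, so it is preserved by $-\otimes-$, and therefore $\Delta$ restricts to $E$. Being a pure subcoalgebra of a pointed coalgebra, $E$ is pointed by \ref{important}. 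It follows that $E$ is the equalizer in $\mathcal{CA}^{flat}_*$ and that $U$ preserves it.

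For (2), I note first that the tensor product is the categorical product in $\mathcal{CA}^{flat}_*$. It stays in the category by \ref{pointed}, and the projections are $1\otimes\epsilon$ and $\epsilon\otimes 1$. Cartesian closedness then means that $C\otimes-$ has a right adjoint. The plan is that $C\otimes-$ preserves colimits in $\mathcal{CA}^{flat}_*$: these colimits are computed on underlying flat modules, tensoring with a flat module preserves module colimits, and it also commutes with the reduction modulo torsion because $C$ is flat. Since $\mathcal{CA}^{flat}_*$ is locally presentable, the adjoint functor theorem then produces $[C,-]$.

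The main obstacle is verifying that $C\otimes-$ really commutes with colimits formed in $\mathcal{CA}^{flat}_*$ rather than in $\mathbf{Mod}$, in particular with the torsion-killing step. If this is delicate, the first alternative is to use that $\mathcal{CA}^{flat}$ is cartesian closed by \cite{St}, with internal hom $H(C,D)$, and to set $[C,D]_*:=\Phi H(C,D)$. The closedness would then follow from the chain
\[
\Hom_*(E\otimes C,D)\cong\Hom(E,H(C,D))\cong\Hom_*(E,\Phi H(C,D))
\]
for pointed $E$.
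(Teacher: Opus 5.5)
Your proposal is correct and follows essentially the paper's route. In both, colimits in $\mathcal{CA}^{flat}_*$ are created in $\mathbf{Mod}^{flat}$, so the adjoint functor theorem on the locally presentable category $\mathcal{CA}^{flat}_*$ yields $S_*$; since the product is $\otimes$ and $C\otimes-$ is cocontinuous, it also yields the internal homs. Your dual-Beck check via absolute split equalizers, together with the description $S_*=\Phi\circ S$, just spells out what the paper delegates to Barr, and the torsion-killing step you flag is harmless because $C\otimes tM=t(C\otimes M)$ for flat $C$.
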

\begin{proof} The proof of (1) works as the one given by Barr  for $\mathcal{CA}$ in \cite{Barr}. The essential point is that colimits in $\mathcal{CA}_*^{flat}$ are created in $\mathbf{Mod}^{flat}$.
	
 To see (2), recall the description of the product as the tensor product and the fact that $C\otimes -$ respects all colimits in $\mathbf{Mod}^{flat}$ and hence in $\mathcal{CA}_*^{flat}$. Then   the special adjoint functor theorem provides the right adjoints for all functors $C\otimes -$.
	\end{proof}


\section{\large\textbf{Simplicial flat pointed coalgebras }}
Write  $s\mathbf{C}^{flat}_*$ for the category of simplicial objects in  $\mathbf{C}^{flat}_*$.\\
There is an induced adjunction between the category of simplicial sets $s\mathbf{Set}$ and $s\mathbf{C}^{flat}_*$.\[R(-):s\mathbf{Set} \rightleftarrows s\mathbf{C}^{flat}_* :Gr\]
 Since $s\mathbf{C}^{flat}_*$ is complete and cocomplete it is a simplicial category  \cite[II.2.5]{G.J.1}.\\
  For  $K\in s\mathbf{Set}$ and $C\in s\mathbf{C}^{flat}_*$, the tensor  is defined by \[K\otimes C = R(K)\otimes C.\]
 That  this makes sense i.e the functor $-\otimes C$ lands  in $s\mathbf{C}^{flat}_*$ follows from \ref{pointed}. The simplicial structure on $s\mathbf{C}^{flat}_*$ is then defined as in \cite{St}.\\
 
  Classes $\mathbf{we}$, $\mathbf{cof}$ of morphisms in $s\mathbf{C}^{flat}_* $  are defined as homology equivalences and maps which are degree wise pure injections respectively.
 The class $\mathbf{fib}$ of fibrations is defined by the right lifting property with respect to trivial cofibrations.\\
 
 The main result of this section is:\\

\begin{theorem}\label{main}\begin{it}  The classes $\mathbf{we}$,  $\mathbf{cof}$ and  $\mathbf{fib}$ define a left proper, combinatorical, simplicial,
		monoidal  model category  structure on $s\mathbf{C}^{flat}_* $.\end{it}
\end{theorem}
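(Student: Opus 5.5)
We will obtain the model structure on $s\mathbf{C}^{flat}_*$ by transferring, or more precisely restricting, the model structure on simplicial flat coalgebras $s\mathbf{C}^{flat}$ from \cite{St}. In that structure the weak equivalences are the homology equivalences and the cofibrations are the degreewise pure injections. Since both classes are defined for $s\mathbf{C}^{flat}_*$ exactly as for $s\mathbf{C}^{flat}$, the plan is to apply Jeff Smith's recognition theorem for combinatorial model categories (as in \cite{Hi}, or in Beke's form) directly to $s\mathbf{C}^{flat}_*$, and to reduce each hypothesis to the corresponding statement already verified in \cite{St}.

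\textbf{Step 1: local presentability and generating cofibrations.} By the lemma above, $\mathcal{CA}^{flat}_*$ is locally presentable, hence so is $s\mathbf{C}^{flat}_*$. Then $\mathbf{cof}$ is generated by a set: take degreewise pure inclusions $D\subset E$ with $E$ of bounded cardinality. The argument of \cite{St} should carry over here. Given a degreewise pure inclusion $A\to B$ of pointed objects and a small pure sub-simplicial coalgebra of $B$, one enlarges it by a small pure sub-simplicial coalgebra, using Theorem \ref{colimits} together with Lemma \ref{basic}. By Lemma \ref{important} these subobjects are again pointed, since pure subcoalgebras of pointed coalgebras are pointed. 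It follows that $\mathbf{cof}$ is the saturation of such a set.

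\textbf{Step 2: the remaining Smith conditions.} The weak equivalences form an accessible class satisfying 2-out-of-3, being created by the accessible functor to chain complexes. Trivial cofibrations are closed under pushout and transfinite composition for the following reason. Colimits in $\mathcal{CA}^{flat}_*$ are computed in $\mathcal{CA}^{flat}$, as shown in the proof of Lemma \ref{cocomplete}. Hence a pushout in $s\mathbf{C}^{flat}_*$ of a degreewise pure map is the same as the pushout in $s\mathbf{C}^{flat}$, where \cite{St} already shows these are stable (its underlying chain complex pushout is a pushout of flat modules along a pure monomorphism). We also need that maps with the right lifting property against $\mathbf{cof}$ are weak equivalences. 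For this we use the cylinder $\Delta[1]\otimes C$, which stays pointed by Corollary \ref{pointed}: it yields simplicial homotopies, so such maps are homotopy equivalences.

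\textbf{Step 3: the additional properties.} Left properness follows because pushouts along degreewise pure injections agree with those in $s\mathbf{C}^{flat}$, which is left proper. The simplicial and monoidal axioms reduce to the pushout-product axiom for $\otimes$, which is the categorical product by Corollary \ref{pointed}. The underlying module of a pushout-product is the same as in $s\mathbf{C}^{flat}$, so both purity and acyclicity are inherited from \cite{St}, whose argument in turn goes through Gillespie's flat model structure. The unit $R$ is pointed and cofibrant.

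The main obstacle is Step 1 (together with the solution-set part of Step 2), because the pointed subcategory is coreflective rather than reflective. Fibrant replacements and limits involve the right adjoint $\Phi$, so the restricted structure cannot simply be transported. I would first try to show that $\mathcal{CA}^{flat}_*\subset\mathcal{CA}^{flat}$ is closed under pure subobjects and colimits, which is exactly Lemma \ref{important} and Lemma \ref{cocomplete}. From this, the factorizations produced by the small object argument in $s\mathbf{C}^{flat}$, applied to a map between pointed objects using only pointed generators, stay in $s\mathbf{C}^{flat}_*$, so no use of $\Phi$ is needed.
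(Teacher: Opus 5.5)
Your proposal is correct in outline, but it reaches the model structure by a different route than the paper. The paper does not use Smith's recognition theorem. It checks the axioms directly, following Goerss:
\begin{itemize}
\item the factorization into a cofibration followed by a trivial fibration is built from the cofree pointed coalgebra functor $S_*$ of Theorem \ref{Barr}, together with the flat model structure on $s\mathbf{Mod}^{flat}$ from \cite{St};
\item the factorization into a trivial cofibration followed by a fibration comes from a Goerss--Jardine bounded cofibration property (Lemma \ref{pure} and the lemma just before Lemma \ref{Bleq}). This shows that fibrations are detected by trivial cofibrations with target of cardinality at most $\lambda$, so the small object argument applies (Lemma \ref{fac2});
\item lifting, retracts and the extra properties are referred to \cite{St}.
\end{itemize}
In your version, the job of $S_*$ is done by the cylinder argument. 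You get a section $s$ of $p$ by lifting against $0\to Y$, then lift against the degreewise split inclusion $\partial\Delta[1]\otimes X\to\Delta[1]\otimes X$ to obtain a simplicial homotopy $\mathrm{id}\simeq sp$. The job of the bounded cofibration lemma is done by the accessibility of the homology equivalences. Your route is shorter, avoids cofree coalgebras and the homotopy-group bookkeeping of the bounded cofibration lemma, and gives combinatoriality automatically. The paper's route, in exchange, yields an explicit generating set of trivial cofibrations (those with small target) and the concrete fibration criterion of Lemma \ref{Bleq}, in parallel with \cite{G}.

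The one step you should spell out is the cofibrant generation in Step 1. The paper also needs this for ``combinatorial'' and likewise leaves it to \cite{St}. The difficulty is that a sum of two pure submodules need not be pure. For instance, $\mathbb{Z}(1,1)+\mathbb{Z}(1,-1)\subset\mathbb{Z}^2$ has index $2$ in its purification $\mathbb{Z}^2$. So to write a cofibration $A\to B$ as a transfinite composite of pushouts of small cofibrations, you must choose the small pure pointed piece $N$ so that $B_\alpha+N$ is pure in $B$. You can do this by a countable alternation:
\begin{itemize}
\item purify $B_\alpha+N$; its image modulo $B_\alpha$ is the purification of $(B_\alpha+N)/B_\alpha$ in the flat module $B/B_\alpha$, which has cardinality at most $\lambda$;
\item enlarge $N$ to a small pure pointed subcoalgebra containing representatives of that purification;
\item pass to the union, using that unions of chains of pure submodules are pure.
\end{itemize}
Then $B_\alpha\cap N\to N$ is a small cofibration by Lemmas \ref{intersections} and \ref{infcap} and the argument of Lemma \ref{important}. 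Its pushout along $B_\alpha\cap N\to B_\alpha$ is $B_\alpha+N$, which is torsion free, so no passage modulo torsion intervenes.
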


The proof is  mainly the same proof as in the unpointed case given in \cite{St}. We indicate some modifications  in the argument.

 We have seen that  $s\mathbf{C}^{flat}_* $ is complete and cocomplete,
	For the factorization of a map into cofibration and trivial fibration one just replaces $S$ by $S_* $ and the adjunction to the model structure of flat simplicial modules $s\mathbf{Mod}^{flat}$ constructed  in \cite{St} in the argument.


We turn to the other factorization axiom.

Let $\lambda$ be a regular infinite cardinal which is least as big as the cardinality of $R$.

\begin{lemma}
	\label{pure}\begin{it}
		Let  $D\in s\mathbf{C}^{flat}_*$. Every element $x\in D$ is contained in a pointed simplicial subcoalgebra $C$ which is pure in $D$ and  of cardinality less than or equal to $\lambda$.  
\end{it}\end{lemma}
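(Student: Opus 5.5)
The plan is to reduce to the unpointed case treated in \cite{St} and then use that pointedness passes to pure subcoalgebras. First, forget the pointedness: $D$ is a simplicial object of $\mathbf{C}^{flat}$. The corresponding lemma in \cite{St} gives, for every $x\in D_n$, a simplicial subcoalgebra $C\subset D$ that contains $x$, is degreewise pure in $D$, and has cardinality at most $\lambda$. If that lemma is used only as a black box, I would rebuild its countable closure argument, as follows.

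Start with $x\in D_n$. By \ref{colimits}, $x$ lies in a pure subcoalgebra of $D_n$ of finite rank. Since $R$ has cardinality at most $\lambda$, such a subcoalgebra has cardinality at most $\lambda$. Next, close under all simplicial operators $d_i,s_j$: the images are subcoalgebras of cardinality at most $\lambda$, because there are only countably many operators in each degree. These images need not be pure, so purify them degreewise. By \ref{basic} the purification of a subcoalgebra is again a subcoalgebra. The purification of a module of cardinality at most $\lambda$ is its relative divisibility hull, which still has cardinality at most $\lambda$: each element of the hull is determined by an element $ry$ and the scalar $r$, and $D_n$ is torsion free, so $y$ is unique. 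Purification destroys closure under the simplicial operators, so iterate. Set $C^{(0)}$ to be the graded subcoalgebra just built. Let $C^{(k+1)}$ be the degreewise purification of the sum of $C^{(k)}$ and all simplicial images of $C^{(k)}$; the sum is a subcoalgebra since the images of subcoalgebras under coalgebra maps are subcoalgebras. Finally put $C=\bigcup_k C^{(k)}$.

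Three checks remain. The union $C$ is closed under the $d_i,s_j$ by construction. It is pure in each degree, because a directed union of pure submodules is pure (pure monomorphisms are closed under transfinite composition). Its cardinality is at most $\aleph_0\cdot\lambda=\lambda$.

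The last step is pointedness. In each degree $C_n$ is a pure subcoalgebra of the pointed coalgebra $D_n$. A pure simple subcoalgebra $S$ of $C_n$ is then pure in $D_n$, since a composition of pure monomorphisms is pure. It is also simple in $D_n$: any pure subcoalgebra of $S$ that is pure in $D_n$ is pure in $S$ by the cancellation property of pure monomorphisms. Hence $S\cong R$, exactly as in \ref{important}, and so $C\in s\mathbf{C}^{flat}_*$.

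I expect the main obstacle to be the cardinality control in the purification step. There one must make sure that the relative divisibility hull over a principal ideal domain adds at most $\lambda$ elements, and that \ref{basic} keeps the coalgebra structure at each stage of the iteration. Everything else is bookkeeping with the closure properties of pure monomorphisms listed in section 2.
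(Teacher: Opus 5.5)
Your proof is correct and is essentially the paper's: a finite-rank pure subcoalgebra from \ref{colimits}, the simplicial subcoalgebra it generates, degreewise purification via \ref{basic}, and pointedness by the argument of \ref{important}. The only difference is your iteration, which is harmless but unnecessary: contrary to your remark, purification does not destroy simplicial closure, because in the torsion-free modules $D_m$ the purification of $N$ is the saturation $\{y : ry\in N \text{ for some } r\neq 0\}$, and $r\,\theta^*(y)=\theta^*(ry)$ shows the saturation is closed under every $\theta^*$, so a single purification step already gives a simplicial subcoalgebra, as in the paper.
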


\begin{proof}
Suppose $x$ is in simplicial degree $n$.	Choose a pure   subcoalgebra of finite rank $D_x \subset D_n$ with $x\in D_{x,n}$. It is of cardinality less than or equal to $\lambda$. 
 This is possible by \ref{colimits}.
  The purification $\widetilde{D_x}$ of the simplicial subcoalgebra generated by $D_{x,n}$.  Note that since the degenerate simplices form a direct summand, the purification acts non trivially only in simplicial degrees less than  $n$. Then $\widetilde{D_x}$   is a pure  simplicial subcoalgebra using \ref{basic}.  It has still cardinality less than or equal to $\lambda$ and of finite rank. Moreover, it is pointed  by \ref{important}.
  
\end{proof}

From this, we verify the bounded cofibration axiom  of Goerss and Jardine \cite{G.J.2}.

\begin{lemma}\label{key}\begin{it} Let  $f:A \to B$ be a trivial cofibration in  $s\mathbf{C}^{flat}_*$ and let $x\in B$ be any element.
		Then there is a simplicial  subcoalgebra $B_x$ of $B$ with $x \in B_x$ which is of cardinality $\leq \lambda$   such that $A\cap B_x$ is a subcoalgebra of $A$
		and such that \[f_| :A\cap B_x \to B_x \] is a trivial cofibration.\end{it}
\end{lemma}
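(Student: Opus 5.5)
We will follow the bounded cofibration argument of Goerss and Jardine, as adapted to unpointed flat coalgebras in \cite{St}, and check that each construction stays inside $s\mathbf{C}^{flat}_*$. The idea is to build $B_x$ as the union of a countable increasing chain of pure, pointed simplicial subcoalgebras
$$B^0\subset B^1\subset B^2\subset\cdots\subset B,$$
each of cardinality $\leq\lambda$. Each step kills a potential obstruction to $A\cap B^k\to B^k$ being a homology isomorphism. Since $f$ is a trivial cofibration, $H_*(B/A)=0$, and $B/A$ is degreewise flat because $f$ is degreewise pure.

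The steps are as follows.

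\textbf{Start.} Apply Lemma \ref{pure} to $x$ to obtain $B^0$: a pure pointed simplicial subcoalgebra of $B$ containing $x$, with $|B^0|\leq\lambda$.

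\textbf{Inductive step.} Given $B^k$, consider the degreewise pure subcomplex
$$B^k/(A\cap B^k)\subset B/A.$$
Its normalized chains have at most $\lambda$ cycles. Every cycle bounds in $B/A$, so choose for each cycle a preimage of a bounding chain in $B$. Let $B^{k+1}$ be obtained by applying Lemma \ref{pure} to each of these at most $\lambda$ chosen elements, taking the sum of the resulting pure pointed subcoalgebras with $B^k$, and then purifying the sum. By Lemma \ref{basic} and Lemma \ref{important}, the purification of this sum is again a pure pointed subcoalgebra. Because $\lambda\geq|R|$ is regular and infinite, the cardinality bound $\leq\lambda$ is preserved.

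\textbf{Conclusion.} Set $B_x=\bigcup_k B^k$. A filtered union of pure pointed subcoalgebras is again pure and pointed, and its cardinality is $\leq\lambda$.

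It remains to check the required properties of $B_x$.

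\textbf{$A\cap B_x$ is a subcoalgebra and $f_|$ is a cofibration.} The module $A\cap B_x$ is an intersection of pure subcoalgebras of $B$. By Lemma \ref{intersections} and Lemma \ref{infcap}, it is a pure simplicial subcoalgebra. It is pointed because it is pure in a pointed coalgebra. The map $A\cap B_x\to B_x$ is degreewise pure, since its composite with $B_x\to B$ is pure, and purity of a composite forces purity of the first map. Hence it is a cofibration.

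\textbf{$f_|$ is a weak equivalence.} Homology commutes with filtered unions, so every cycle of $B_x/(A\cap B_x)$ already lies in some $B^k/(A\cap B^k)$. By construction that cycle bounds in $B^{k+1}/(A\cap B^{k+1})$. Hence $H_*(B_x/(A\cap B_x))=0$. Both $A\cap B_x$ and $B_x$ are degreewise flat with flat quotient, so the short exact sequence
$$0\to A\cap B_x\to B_x\to B_x/(A\cap B_x)\to 0$$
gives a long exact homology sequence, and $f_|$ is a homology equivalence.

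\textbf{Main obstacle.} The delicate point is the inductive step. Enlarging $B^k$ and then purifying must not destroy the boundaries we added, and it must not introduce new non-pointed simple pieces. Boundaries are preserved because purification only enlarges the subcomplex. Pointedness is preserved because of Lemma \ref{important}. One must also confirm that the map $B^{k}/(A\cap B^k)\to B/A$ is still injective after purification; this is exactly the purity of $A\cap B^k$ in $B^k$ proved above. Finally, the cardinality bound for the purification requires $\lambda\geq|R|$. I expect checking that purification keeps the size $\leq\lambda$ to be the main technical point, handled as in \cite{St}.
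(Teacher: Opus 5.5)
Your proposal is correct and follows essentially the same route as the paper: a countable chain $B^0\subset B^1\subset\cdots$ of pure pointed simplicial subcoalgebras of cardinality $\leq\lambda$, starting from Lemma \ref{pure}, enlarged at each step by lifts of bounding chains for the cycles of $B^k/(A\cap B^k)$ and then purified using Lemmas \ref{basic} and \ref{important}, with $B_x=\bigcup_k B^k$. One small correction to your closing remark: injectivity of $B^k/(A\cap B^k)\to B/A$ is automatic, since $A\cap B^k$ is exactly the kernel of $B^k\to B/A$, so no purity is needed for that step.
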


\begin{proof} We construct inductively a chain of pure subcoalgebras $B^n \subset B$ with 
	$$\pi_* (B^i /(B^i \cap A) )\to  \pi_* (Bi+1^{} /(B^{i+1} \cap A) )$$ trivial,\\
	Choose a simplicial subcoalgebra $B^0$  of $B$ which is of cardinality $\leq \lambda$ which is pure in $B$  
	and with $x\in B^0$. This is possible by \ref{pure}.\\
	Suppose $B^n$ has been found. Let $[z]\in \pi_k (B^n /(B^n \cap A) )$ there is then $y\in B/A$ with $\partial (y)= z$ in $B/A$. This is possible since $\pi_* (B /A )=0$. Let $\bar{y}$ be a representative of $y$ in $B$. 
	Choose a pure simplicial subcoalgebra $N_z \subset B$ of cardinality $\leq \lambda$  with $\bar{y}\in N_z$.
	Then $[z]$ is in the kernel of \[\pi_k (B^n /(B^n \cap A) )\to \pi_k (\widetilde{B^n+N_z } /(\widetilde{(B^n +N_z )} \cap A)) ).\]
	Set \[B^{n+1}=\widetilde{B^n + \sum_{z\in \pi_{*} (B^n /B^n
	\cap A ) } }N_z.\]
	By \ref{intersections},\ref{basic}, 
	 and \ref{important},  $B^{n+1 }$ is a pointed pure simplicial coalgebra. The same is true for $B^{n+1}\cap A$ .\\
	Finally, let $B_x = \cup_n B^n$.
	By construction $\pi_* (B_x /(B_x \cap A))$ is trivial. Moreover, the union of a chain of pure submodules is a pure submodule.
	Since $A$ and $B_x$ are   pointed simplicial  subcoalgebras so is $B_x \cap A$. We conclude that  \[f_| :A\cap B_x \to B_x\] is a trivial cofibration with all properties asserted.    
\end{proof}

Using \ref{key} the same proof as \cite[3.7.]{G} shows:

\begin{lemma}\label{Bleq}\begin{it} A map \[f:C\to D\] in $s\mathbf{C}^{flat}_*$ is a fibration if and only if it has the right lifting property with respect to trivial cofibrations
		$A\to B$ with $B$ of cardinality less than or equal to $\lambda$.\end{it}\end{lemma}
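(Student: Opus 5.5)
One direction is formal. Every cofibration $A\to B$ with $B$ of cardinality at most $\lambda$ is in particular a trivial cofibration when it is a weak equivalence. So a fibration, defined by the right lifting property against all trivial cofibrations, has it against these in particular. The work is the converse. Suppose $f\colon C\to D$ has the right lifting property with respect to all trivial cofibrations $A\to B$ with $|B|\le\lambda$. Given an arbitrary trivial cofibration $i\colon A\to B$ and a commutative square from $i$ to $f$, I would produce a lift by a Zorn's lemma argument, exactly as in \cite[3.7.]{G}. The plan is to exhaust $B$ by intermediate trivial cofibrations, each obtained from the previous one by attaching a piece of cardinality $\le\lambda$.

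Concretely, consider the set of pairs $(B',h')$ where:
\begin{itemize}
\item $B'$ is a pure pointed simplicial subcoalgebra with $A\subset B'\subset B$;
\item $A\to B'$ is a weak equivalence, hence a trivial cofibration;
\item $h'\colon B'\to C$ is a lift compatible with the given square.
\end{itemize}
Order these pairs by extension. A chain has an upper bound given by the union. Unions of chains of pure submodules are pure. The union is a simplicial subcoalgebra, pointed by \ref{important} (or simply because pure subcoalgebras of pointed coalgebras are pointed). Homology commutes with filtered colimits, so $A\to\bigcup B'$ is again a weak equivalence. Hence Zorn gives a maximal pair $(B',h')$, and I claim $B'=B$.

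Suppose not, and pick $x\in B\setminus B'$. Then $B'\to B$ is a cofibration, and it is a trivial cofibration by two out of three. Apply \ref{key} to $B'\to B$ and $x$. This gives a simplicial subcoalgebra $B_x\ni x$ of cardinality $\le\lambda$ such that $B'\cap B_x\to B_x$ is a trivial cofibration. By hypothesis on $f$, the map $h'|_{B'\cap B_x}$ extends over $B_x$ compatibly with the map $B\to D$. Now form $B''=B'+B_x$, the pushout of $B'\leftarrow B'\cap B_x\to B_x$. The two lifts glue along it to give $h''\colon B''\to C$. Moreover $B'\to B''$ is a pushout of a trivial cofibration, hence a trivial cofibration, so $A\to B''$ is a weak equivalence and $(B'',h'')$ strictly extends $(B',h')$. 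This contradicts maximality.

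The step I expect to be the main obstacle is identifying the sum $B'+B_x$ inside $B$ with the pushout in $s\mathbf{C}^{flat}_*$, and checking that it is pure in $B$. Colimits in flat coalgebras are taken modulo torsion. I would argue with the remark in Section 3 that the pushout of two pure subcoalgebras along their intersection is a pure subcoalgebra, together with \ref{intersections}, applied degreewise. Pointedness then follows from \ref{important}. Stability of trivial cofibrations under pushout should be quoted from the unpointed model structure of \cite{St}: $B'+B_x$ is the same subobject in both categories, and the quotient $(B'+B_x)/B'\cong B_x/(B'\cap B_x)$ has trivial homology. The latter identification is what I would verify directly.
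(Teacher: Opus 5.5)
Your strategy is the paper's own: its proof is just the Zorn argument of \cite[3.7.]{G} fed by the bounded cofibration lemma \ref{key}. The gap is at exactly the step you flagged, and the way you propose to get through it does not work.

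Your poset must require $B'$ to be pure in $B$. At a maximal element you can only apply \ref{key} to $B'\to B$ if that map is a cofibration. So you have to show that $B''=B'+B_x$ is pure in $B$, and neither result you cite gives this. Lemma \ref{intersections} concerns intersections, not sums. The correct content of the Section 3 remark is that pure monomorphisms are stable under pushout and that $B'\cap B_x$ is pure in $B_x$. This only shows that $B'\to B'+B_x$ and $B_x\to B'+B_x$ are pure.

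Read as a statement about purity in the ambient coalgebra, the remark is false. Take the pointed coalgebra $\mathbb{Z}\{1,a,b\}$ with $1$ group-like and $a,b$ primitive. Then $\mathbb{Z}\{1,a\}$ and $\mathbb{Z}\{1,a+2b\}$ are pure subcoalgebras, but their sum contains $2b$ and not $b$. Nothing in \ref{key} excludes the same phenomenon for $B'+B_x$ inside $B$. So $(B'',h'')$ need not belong to your poset, and maximality is not contradicted.

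Two obvious repairs fail. Dropping purity from the poset does not help: a maximal $B'$ could then be impure, and \ref{key} would be unavailable. Passing to $\widetilde{B''}$ does not help either, since $h''$ has no reason to extend to it. Over a field the issue is invisible because every injection is a cofibration. That is why the one-line reference to Goerss hides it, in the paper as well.

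What is missing is a strengthened form of \ref{key}. For a trivial cofibration $B'\to B$ and $x\in B$, you want $B_x$ as in \ref{key} with, in addition, $B'+B_x$ pure in $B$. Let $q\colon B\to B/B'$; its target is degreewise flat, hence torsion free.

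In the inductive construction of the $B^n$ in the proof of \ref{key}, adjoin at each stage, besides the $N_z$, some extra subcoalgebras as in \ref{pure}. For each element of the degreewise purification of $q(B^n)$ in $B/B'$, pick a preimage in $B$ and adjoin such a subcoalgebra containing it. There are at most $\lambda$ such elements, since $|R|\le\lambda$, so $B^{n+1}$ still has cardinality at most $\lambda$.

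The construction then gives what you need:
\begin{enumerate}
\item $q(B_x)=\bigcup_n q(B^n)$ is pure in $B/B'$.
\item Hence $B'+B_x=q^{-1}(q(B_x))$ is pure in $B$: if $rb\in B'+B_x$ with $r\neq 0$, then $q(b)\in q(B_x)$.
\item It is therefore pointed, being a pure subcoalgebra of $B$.
\item Acyclicity of $B_x/(B'\cap B_x)$ is proved exactly as before, since each class dies at the next stage.
\end{enumerate}
With this version of \ref{key}, the rest of your argument closes the Zorn argument: gluing along the module pushout $B'+B_x$, and acyclicity of $(B'+B_x)/B'\cong B_x/(B'\cap B_x)$.
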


From \ref{Bleq} we get by the small object argument:

\begin{lemma}\label{fac2}\begin{it} Any map \[f:C\to D\] in $s\mathbf{C}^{flat}_*$ can be factored $f=pi$ into a trivial cofibration $i$  and a fibration $p$.\end{it}
\end{lemma}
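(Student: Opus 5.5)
We obtain the factorization by the small object argument, using as generating set the collection $J$ of (isomorphism class representatives of) trivial cofibrations $A\to B$ in $s\mathbf{C}^{flat}_*$ with $B$ of cardinality at most $\lambda$. By \ref{Bleq}, a map with the right lifting property with respect to $J$ is a fibration. So it is enough to run the small object argument on $J$ and show that the resulting left factor is a trivial cofibration.

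\textbf{Step 1: $J$ is a set and its domains are small.} Objects of cardinality at most $\lambda$ form a set up to isomorphism, so $J$ can be chosen to be a set. The domains $A$ have cardinality at most $\lambda$. Since $\mathcal{CA}_*^{flat}$, and hence $s\mathbf{C}^{flat}_*$, is locally presentable, each such $A$ is $\kappa$-presentable for some regular $\kappa$. Colimits are created in $\mathbf{Mod}^{flat}$ (see the proof of \ref{Barr}) and are computed degreewise, so for $\kappa$ large enough $\mathrm{Hom}(A,-)$ commutes with $\kappa$-filtered colimits along the relevant maps.

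\textbf{Step 2: the transfinite construction.} Given $f\colon C\to D$, put $C^0=C$. At each stage, form the pushout of $\coprod A\to\coprod B$, taken over all commutative squares from maps in $J$ to $C^\beta\to D$. Take colimits at limit ordinals and stop at stage $\kappa$. The map $C^\kappa\to D$ then has the right lifting property with respect to $J$, so it is a fibration by \ref{Bleq}.

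\textbf{Step 3: the left factor $C\to C^\kappa$ is a trivial cofibration.} This is the main point, and it requires two closure properties of trivial cofibrations.

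\emph{Pushouts.} Pure monomorphisms are stable under pushout in modules. The pushout in flat coalgebras is the module pushout modulo torsion. When one leg is a degreewise pure mono, the module pushout of flat modules is already flat, so no torsion is killed. Degreewise, the pushout of $A\to B$ along $A\to C'$ has cokernel $B/A$. Hence the homology of the quotient is unchanged, and by the long exact sequence the pushout is again a homology equivalence.

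\emph{Coproducts and transfinite composites.} Coproducts are direct sums. Pure monomorphisms are closed under transfinite composition. Homology commutes with filtered colimits of modules, which are created degreewise, so the transfinite composite is a homology equivalence.

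\textbf{Main obstacle.} The delicate point is checking that the pushout in $\mathcal{CA}_*^{flat}$ really agrees with the module pushout, so that the flatness and purity computation above applies. Pointedness of the pushout is guaranteed by \ref{cocomplete}. The absence of torsion I would verify directly from the degreewise short exact sequence $C'\rightarrowtail P\twoheadrightarrow B/A$: the outer terms are flat, so $P$ is flat.
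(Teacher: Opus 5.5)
Your proposal is correct and follows the paper's route. The paper likewise applies the small object argument to the set of trivial cofibrations whose target has cardinality at most $\lambda$, and uses \ref{Bleq} to conclude that the right factor is a fibration.

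The paper states this in one line. You additionally check the points it leaves implicit: that the domains are small, and that trivial cofibrations are closed under coproducts, pushouts and transfinite composites. Your pushout argument is sound: the module pushout along a pure mono is already flat, so it is the pushout in $\mathcal{CA}^{flat}_*$.
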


The proof that the retract axiom and the lifting axioms holds is as the one in the unpointed case given in \cite{St}.
The same is true for the additional properties of   the model category $s\mathbf{C}^{flat}_*$ listed in \ref{main}.

\begin{remark}In \cite{GeGo} Getzler and Goerss show that coassociative space like differential graded coalgebras over a field form a cofibrantly generated model category.  A space-like coalgebra is a differential graded coalgebra which is group like in degree zero.  
\end{remark}

\begin{remark} In \cite{L} Lurie defines an $\infty$-topos to be of homotopy dimension zero if every object admits a map from the terminal object. This makes sense for the $\infty$-category defined by $s\mathbf{C}^{flat}_*$.  Since every $C\in s\mathbf{C}^{flat}_*$ admits a map from the final simplicial coalgebra $R$, $s\mathbf{C}^{flat}_*$ is of homotopy dimension $0$. This is not true for  $s\mathbf{C}^{flat}$.  
\end{remark}	

 \section{\large\textbf{Proof of the  main theorem}} 
 
We may assume,  in this section, that the ground ring $R$ is a prime field or the localization of the integers $\mathbb{Z}[S^{-1}] $ at a set of primes $S$. This implies no loss of generality 
by the results in \cite[chapter I, section 9]{B.K}. 

We let $\bold{Ho}_R (s\mathbf{Set}) $ and $\bold{Ho}( s\mathbf{C}^{flat}_*)$ denote the homotopy categories of the model categories of the Bousfield localization of simplicial sets at $H_* (-;R)$ equivalences and the one of \ref{main} respectively.\\

Using the fact that every object in  $s\mathbf{C}^{flat}_*$ is cofibrant the proof of the next result is idenitcal to the one of \cite[proposition 4.4.]{G} Note because of this fact,  one has $\bold{L}R(-) = R(-). $

\begin{proposition}\begin{it}  The functors $R(-)$ and $Gr$ induce adjoint functors $$\bold{L}R(-):\bold{Ho}_R (s\mathbf{Set})\rightleftarrows  \bold{Ho}( s\mathbf{C}^{flat}_*):\bold{R}Gr$$\end{it}
	
\end{proposition}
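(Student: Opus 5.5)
The goal is to show that $R(-)\dashv Gr$ is a Quillen adjunction between the $H_*(-;R)$-local model structure on $s\mathbf{Set}$ and the model structure of \ref{main} on $s\mathbf{C}^{flat}_*$. The derived functors $\bold{L}R(-)$ and $\bold{R}Gr$ then exist and are adjoint on homotopy categories by the general theory in \cite{Hovey2}. Since every object of $s\mathbf{C}^{flat}_*$ is cofibrant, and every simplicial set is cofibrant, $\bold{L}R(-)$ is just $R(-)$ applied to objects. It suffices to check that the left adjoint $R(-)$ preserves cofibrations and trivial cofibrations.

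\textbf{Step 1: the adjunction exists levelwise.} The adjunction $R(-):\mathbf{Set}\rightleftarrows \mathbf{C}^{flat}_*:Gr$ was recorded in section 4. Here $R(S)$ is pointed, since its pure simple subcoalgebras are the lines $Rs$ by \ref{group-like is direct} together with the remark $C_0=RGr(C)$. Prolonging degreewise gives the simplicial adjunction, and by \ref{pointed} and the definition of the tensor $K\otimes C=R(K)\otimes C$ it is compatible with the simplicial structure.

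\textbf{Step 2: cofibrations.} A cofibration in the Bousfield localized structure is a monomorphism $K\to L$ of simplicial sets. Degreewise, $R(K_n)\to R(L_n)$ is the inclusion of a free module on a subset of a basis, hence a split and in particular pure monomorphism. So $R(-)$ sends cofibrations to degreewise pure injections, which are exactly $\mathbf{cof}$.

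\textbf{Step 3: trivial cofibrations.} A weak equivalence in the localized structure is a map inducing an isomorphism on $H_*(-;R)$. The homology of the simplicial module $R(K)$ (via the normalized chains) is by definition $H_*(K;R)$. The weak equivalences in $s\mathbf{C}^{flat}_*$ are the homology equivalences of underlying simplicial modules, so $R(-)$ sends $H_*(-;R)$-equivalences to weak equivalences, and in particular trivial cofibrations to trivial cofibrations. Equivalently, $Gr$ preserves fibrations and trivial fibrations.

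\textbf{Step 4: conclusion.} The Quillen adjunction theorem now yields the adjoint pair $\bold{L}R(-)\dashv\bold{R}Gr$ on homotopy categories, where $\bold{R}Gr$ is computed by first taking a fibrant replacement in $s\mathbf{C}^{flat}_*$ (available from \ref{fac2}). The only point needing care is that the localized structure on $s\mathbf{Set}$ used here is indeed the one with monomorphisms as cofibrations and $H_*(-;R)$-isomorphisms as weak equivalences, as in Bousfield's construction. I expect no real obstacle beyond confirming that the weak equivalences of \ref{main} are homology equivalences of the underlying chain complexes with coefficients in $R$. Once that is checked, the argument is formally identical to \cite[proposition 4.4.]{G}.
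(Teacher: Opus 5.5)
Your proposal is correct and follows the paper's route: the paper defers to Goerss's Proposition 4.4, whose proof is exactly that $R(-)$ sends monomorphisms to degreewise split (hence pure) injections and $H_*(-;R)$-equivalences to homology equivalences, so $R(-)\dashv Gr$ is a Quillen pair. One small point: in Step 1 the remark $C_0=RGr(C)$ presupposes pointedness, so citing it to show $R(S)$ is pointed is circular. Argue instead that a pure subcoalgebra $D\subset R(S)$ equals $R(S)\cap(D\otimes K)$, and $D\otimes K=K(T)$ for some $T\subset S$, whence $D=R(T)$.
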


The key result  on which everything  depends is:\\

\begin{proposition}\begin{it}\label{every } The functor $Gr$ respects all weak equivalences.\end{it}
	\end{proposition}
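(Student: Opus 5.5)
The plan is to follow Goerss' argument in \cite{G}, replacing his étale splitting by the natural splitting of Theorem \ref{split}. The first observation is that for $C\in s\mathbf{C}^{flat}_*$ the pure coradical $C_0$ is, degreewise, $RGr(C_n)$ (by the remark after the definition of the pure coradical). Since Theorem \ref{split} gives a splitting that is natural in coalgebra maps, it is compatible with the face and degeneracy operators. Hence we obtain a natural retraction of simplicial $R$-modules
$$RGr(C)\to C\to RGr(C).$$
Consequently, for a weak equivalence $f\colon C\to D$ (an $H_*(-;R)$-isomorphism), the map $H_*(R Gr(C))\to H_*(RGr(D))$ is a retract of $H_*(f)$ and is therefore an isomorphism. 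In other words, $Gr(f)$ is an $R$-homology equivalence of simplicial sets.

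This falls short of what is needed, which is that $Gr(f)$ be a weak equivalence of simplicial sets. The idea is therefore to reduce to the case where $C$ and $D$ are fibrant. We factor $f$ and use Ken Brown's lemma: since every object is cofibrant, it suffices to treat trivial fibrations and weak equivalences between fibrant objects, provided $Gr$ is a right Quillen functor. That $Gr$ is a right Quillen functor follows because $R(-)$ sends monomorphisms to degreewise split, hence pure, injections and sends weak equivalences to homology equivalences. So $Gr$ preserves fibrations and trivial fibrations, and by Ken Brown it preserves weak equivalences between fibrant objects. For a general weak equivalence, however, we must show directly that $Gr(f)$ is a weak equivalence. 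For this I would use the retraction together with the fact that, for fibrant $C$, $Gr(C)$ is a Kan complex whose homotopy groups are detected by maps $R(\partial\Delta^n)\to C$.

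The intended strategy is Goerss' \cite[Prop.~4.5]{G}: show that a lifting problem for $Gr(f)$ against $\partial\Delta^n\to\Delta^n$ transfers, through the adjunction, to a lifting problem for $f$ against $R(\partial\Delta^n)\to R(\Delta^n)$ in the homotopy category. The splitting $C\to RGr(C)$ is then used to project a homotopy of coalgebras back into group-like elements. Concretely, a homotopy $R(\Delta^n)\otimes R(\Delta^1)\to C$ composed with the natural retraction to $RGr(C)$ lands in group-likes on vertices. Since pointed irreducible components are preserved (Lemma \ref{deco1}), the homotopy stays inside the components indexed by group-likes, so by Corollary \ref{pointed} it corresponds to a simplicial homotopy in $Gr(C)$.

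I expect the main obstacle to be this last step: making the component-wise retraction $C\to C_0$ interact with the simplicial path object so that homotopies of coalgebra maps give simplicial homotopies of sets. Over a non-algebraically closed ring, the decomposition $C=\oplus_g C(g)$ from Theorem \ref{irreducible generates} is the substitute for the étale part. Where it does not hold directly, I would first try to verify that the decomposition commutes with $R(\Delta^1)\otimes -$ by using Corollary \ref{789}.
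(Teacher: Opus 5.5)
Your first paragraph is already the paper's complete proof. The natural splitting of Theorem~\ref{split} makes $R(Gr(f))$ a retract of $f$, so it is a homology equivalence, so $Gr(f)$ is an $H_*(-;R)$-equivalence. The mistake is your claim that this ``falls short''. In Section~11 the model structure on $s\mathbf{Set}$ is Bousfield's localization at $H_*(-;R)$-equivalences, so its weak equivalences are exactly the $R$-homology equivalences. That is the sense in which $Gr$ ``respects weak equivalences''. It is also exactly what the proof of the main theorem uses when it asserts that $Gr(j)$ is a weak equivalence for the fibrant replacement $j\colon R(X)\to R(X)_f$. The argument is finished at that point.

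The stronger statement you then pursue, that $Gr(f)$ is a weak homotopy equivalence for every homology equivalence $f$, is false, so the second half of your plan cannot be completed. Take any non-contractible simplicial set $Y$ with $\tilde H_*(Y;R)=0$. Examples are $B\mathbb{Z}/q$ for a prime $q$ that is a unit in $R$, or, for any $R$, an integrally acyclic space with nontrivial fundamental group such as the Poincar\'e homology sphere minus a point. A vertex gives a map $R(\Delta^0)\to R(Y)$, which is a weak equivalence in $s\mathbf{C}^{flat}_*$. Since $S\to Gr(R(S))$ is a bijection, applying $Gr$ gives $\Delta^0\to Y$, which is not a weak homotopy equivalence. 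Equivalently, your claim would make $X\to Gr(R(X)_f)$ a weak homotopy equivalence for every $X$, whereas the paper identifies this map as the $R$-localization.

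You can see where your outline breaks. Ken Brown's lemma for the right Quillen functor $Gr$ only covers weak equivalences between fibrant objects. Likewise, translating lifting problems for $Gr(f)$ into homotopy classes of coalgebra maps $R(\partial\Delta^n)\to C$ is valid only for fibrant $C$. For non-fibrant $C$ such as $R(Y)$, $Gr(C)$ does not compute the derived functor. No projection of homotopies onto group-like elements can repair this, so you should simply stop after your first paragraph.
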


\begin{proof} Let $f:C \to D$ be a weak equivalence in $s\mathbf{C}^{flat}_*$. Then using \ref{split} the map $R(G (f))$ is a retract of $f$.
	Hence, it is a weak equivalence in $s\mathbf{C}^{flat}_*$. But this means that $G (f)$ is a weak equivalence.
\end{proof}

We are now in the position to prove \ref{mainmain}:

\begin{proof} of \ref{mainmain}
It is enough to show that the derived unit $$X\to \bold{R}Gr(R(X) $$ is a natural isomorphism in $\bold{H}_R (s\mathbf{Set})$.
Let $$j: R(X) \to R(X)_f $$ be a natural fibrant replacement.
Then $Gr (j)$ is a weak equivalence. Since $G_R$ respects fibrations and weak equivalences we have a weak equivalence with fibrant target
$$X\cong Gr (R(X)) \to Gr (R(X)_f .$$
This induces a natural isomorphism on $\bold{H}_R (s\mathbf{Set})$. 

\end{proof}

The proof also shows: 

\begin{corollary}\begin{it} The unit of the adjunction 
	$$X \to \bold{R}Gr (R(X) )$$
	is the Bousfield $H_* (-;R)$ localization of $X$.\end{it}
	\end{corollary}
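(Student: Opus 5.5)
The plan is to show that the derived unit $X\to \bold{R}Gr(R(X))$ is an $H_*(-;R)$-localization, following the pattern of \cite{G}. First, I will fix a natural fibrant replacement $j\colon R(X)\to R(X)_f$ in $s\mathbf{C}^{flat}_*$, which exists by the factorization in \ref{fac2}. Then $\bold{R}Gr(R(X))$ is represented by $Gr(R(X)_f)$, and the unit is the composite
$$X\cong Gr(R(X))\xrightarrow{Gr(j)} Gr(R(X)_f).$$
The isomorphism $X\cong Gr(R(X))$ holds because $R$ has no nontrivial idempotents.

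To prove the corollary, I need two properties of this map: it is an $H_*(-;R)$-equivalence, and its target is $H_*(-;R)$-local, i.e.\ fibrant in the Bousfield localized model structure on $s\mathbf{Set}$.

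For the first property, I will apply the splitting \ref{split} degreewise and naturally in the simplicial direction. It exhibits $R(Gr(j))\colon R(X)\to R(Gr(R(X)_f))$ as a retract of $j$ in the arrow category of $s\mathbf{C}^{flat}_*$. Since $j$ is a homology equivalence and weak equivalences are closed under retracts, $R(Gr(j))$ is a homology equivalence. On underlying simplicial modules this is exactly the statement that $Gr(j)$ induces an isomorphism on $H_*(-;R)$. This is the content of \ref{every }.

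For the second property, note that $Gr$ is a right Quillen functor for the adjunction of the preceding proposition, so it preserves fibrant objects. Hence $Gr(R(X)_f)$ is fibrant in the $H_*(-;R)$-local model structure on simplicial sets, which means it is $H_*(-;R)$-local. A map that is an $H_*(-;R)$-equivalence with local target is, by the universal property, the Bousfield localization of $X$. Naturality in $X$ follows because both $j$ and the splitting are natural.

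I expect the main obstacle to be justifying the Quillen pair against the \emph{localized} structure on $s\mathbf{Set}$. Concretely, one must show that $R(-)$ sends $H_*(-;R)$-acyclic cofibrations to trivial cofibrations. It does, because $R(-)$ produces degreewise free, hence pure, monomorphisms, and its homology is precisely $H_*(-;R)$. Since every object of $s\mathbf{C}^{flat}_*$ is cofibrant, $\bold{L}R=R$, and this is what makes the argument above go through. In the writeup I will cite \cite[proposition 4.4]{G} for this step and for the local-fibrancy conclusion.
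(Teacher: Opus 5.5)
Your proposal is correct and follows the paper's argument. You take a natural fibrant replacement $j\colon R(X)\to R(X)_f$, use the natural splitting of Theorem~\ref{split} to exhibit $R(Gr(j))$ as a retract of $j$ (so $Gr(j)$ is an $H_*(-;R)$-equivalence), and use that $Gr$ is right Quillen to see that $Gr(R(X)_f)$ is local; your check that $R(-)$ is left Quillen for the localized structure on $s\mathbf{Set}$ (degreewise split, hence pure, monomorphisms whose homology is $H_*(-;R)$) spells out the step the paper delegates to Goerss's Proposition~4.4.
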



 \section{\large\textbf{Pointed coalgebras, binomial algebras and Galois action}} 
 
\begin{definition}
A torsion free ring $A$ is called binomial if
it satisfies  for each rational prime $p$ :
\begin{enumerate}	
	  .\item $A/pA$ is reduced
	\item  Every residue field of $A/pA$ is isomorphic to $\mathbb{F}_p$.  
\end{enumerate}
\end{definition} 
This is not the original definition of a binomial ring. That the given one  is equivalent to the original one is proved in  \cite[theorem 3.4. ]{Elliot}.

It is then clear that the dual of  a integral flat pointed coalgebra  $C$ satisfies (2) but not (1) in general. 
The dual of the mod $p$ reduction of a generator of finite rank is an artinian algebra. Hence it is a finite product of finite local rings with nilpotent maximal ideal. The presence of nilpotent elements adds flexibility since it makes classical deformation arguments possible. This argument  shows how much bigger   the category of flat pointed coalgebras is  than its subcategory of
simple coalgeras.\\

Let $\mathbb{F}$ be a non algebraiclly  closed field with algebraic closure $\mathbb{K}$ and absolute Galois group $G$. Denote by $Set_G$ the category of discrete $G$-sets.
It is a theorem of Goerss that the pair of adjoint functors 
 $$\mathbb{K}(-)^G : s\mathbf{Set}_G\rightleftarrows   s\mathbf{C}_{\mathbb{K}}:Gr_G$$

induce, for appropriate model category structures, a full and faithful functor of the homotopy categories of
 simplicial $G$-sets to the homotopy category of simplicial coalgebras. Here $$Gr_G (C):= Hom_{\mathbf{C}_{\mathbb{K}}} (\mathbb{K}, C\otimes K).$$

The question which $G$-sets correspond to a pointed coalgebra under $Gr_G$ is answered in:

\begin{proposition}
	Let $C$  be a pointed $\mathbb{F}$-colagebra.
	Then $G$ acts trivially on $Gr_G (C)$.
\end{proposition}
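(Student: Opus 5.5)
The plan is to identify $Gr_G(C)=\Hom_{\mathbf{C}_{\mathbb{K}}}(\mathbb{K},C\otimes\mathbb{K})$ with the set of group-like elements of $C\otimes\mathbb{K}$, and then show that every such element already lies in $C\otimes 1$. The Galois group $G$ acts on $C\otimes\mathbb{K}$ through the second factor, so it fixes $C\otimes 1$ pointwise. Once $Gr(C\otimes\mathbb{K})=Gr(C)\otimes 1$ is established, the action on $Gr_G(C)$ is trivial.

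First I reduce to finite dimension. By Theorem \ref{colimits}, applied over the field $\mathbb{F}$ (where purity is automatic), $C$ is the filtered union of its finite-dimensional subcoalgebras $D$. Each $D$ is pointed, since a simple subcoalgebra of $D$ is a simple subcoalgebra of $C$. A group-like element of $C\otimes\mathbb{K}$ spans a one-dimensional subcoalgebra, which lies in some $D\otimes\mathbb{K}$. Hence it suffices to treat finite-dimensional pointed $C$.

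Next I dualize. $A=C^{\vee}$ is a finite-dimensional commutative $\mathbb{F}$-algebra. Group-like elements of $C\otimes\mathbb{K}$ correspond to $\mathbb{F}$-algebra maps $A\to\mathbb{K}$, equivalently to $\mathbb{K}$-algebra maps $A\otimes\mathbb{K}\to\mathbb{K}$. Simple subcoalgebras of $C$ correspond to the quotients $A/\mathfrak{m}$ by maximal ideals. Pointedness says every simple subcoalgebra is $\mathbb{F}$, i.e. every residue field $A/\mathfrak{m}$ equals $\mathbb{F}$. Since $A$ is artinian, $A\cong\prod_i A_i$ with each $A_i$ local with residue field $\mathbb{F}$; this is the remark after Definition \ref{simple} applied to the irreducible components, which exist by Theorem \ref{irreducible generates} or by Sweedler's classical decomposition.

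Now take an algebra map $\phi\colon A\to\mathbb{K}$. It factors through a single local factor $A_i$, because the images of orthogonal idempotents in the field $\mathbb{K}$ must be $0$ or $1$. Its kernel is a prime ideal of the artinian local ring $A_i$, so it is the maximal ideal $\mathfrak{m}_i$, since nilpotents map to $0$. Thus $\phi$ factors as $A_i\to A_i/\mathfrak{m}_i=\mathbb{F}\hookrightarrow\mathbb{K}$ and takes values in $\mathbb{F}$. Dually, the corresponding group-like element is $g\otimes 1$, where $g\in Gr(C)$ corresponds to $A\to\mathbb{F}$. For $\sigma\in G$, the element $\sigma$ sends $\phi$ to $\sigma\circ\phi=\phi$. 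This shows $G$ acts trivially, and also gives $Gr(C)=Gr(C\otimes\mathbb{K})$, the field analogue of Lemma \ref{Gr = GR_k}.

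The main obstacle is the translation in the dualization step: "all simple subcoalgebras are isomorphic to $\mathbb{F}$" must become "all residue fields of $C^{\vee}$ equal $\mathbb{F}$". This needs the bijection between simple subcoalgebras of a finite-dimensional coalgebra and maximal ideals of its dual algebra, which I will cite from Sweedler. The reduction to finite dimension and the identification of $Gr_G$ with algebra maps are routine.
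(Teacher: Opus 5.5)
Your proposal is correct and follows essentially the same route as the paper: reduce to finite rank, dualize to an artinian algebra that splits as a product of local algebras with residue field $\mathbb{F}$, and note that every algebra map to $\mathbb{K}$ factors through a single local factor and then through its residue field, so the $G$-set is a disjoint union of points. The only difference is cosmetic: you work with $\mathbb{F}$-algebra maps $A\to\mathbb{K}$ directly instead of base-changing to $\mathbb{K}$-algebras, and you spell out the idempotent and nilpotent steps that the paper leaves terse.
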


\begin{proof}
We may assume that $C$ is of finite rank. Then the dual $C^* =A$  is isomorphic to a finite product of finite local algebras with nilpotent maximal ideals and residue fiels $\mathbb{K}$.
$$A\cong \prod_{i\leq n} A_i$$ Moreover, we may transfer the argument to algebras of of finite rank $\mathbf{A}_{\mathbb{K}}$.
\\
Since $\mathbb{K}$ has no zero divisors there is a bijection
$$ Hom_{\mathbf{A}_{\mathbb{K}}}(A,\mathbb{K})\cong \coprod_{i\leq n}Hom_{\mathbf{A}_{\mathbb{K}}}(A_i,\mathbb{K})  $$
Because $\mathbb{K}$ has no nilpotents every algebra morphism
$$f:A_i \to \mathbb{K}$$
factors uniquely over $\mathbb{K}$. Hence there is a decomposition of $G$-sets
$$Hom_{\mathbf{A}_{\mathbb{K}}}(A,\mathbb{K})\cong\coprod_{i\leq n}Hom_{\mathbf{A}_{\mathbb{K}}}(\mathbb{K},\mathbb{K})  $$
This is a trivial $G$-set.
\end{proof}








\begin{thebibliography}{1}
\bibitem{A.R.}
Ad\'{a}mek J.; Rosik\'{y} J. Locally presentable and accessible categories.
London Math. Soc.Lect. Note Series, No 189, Cambridge University Press, (1994).
\bibitem{BaBu}
Bachmann  T;  Burklund R. E-infinity-coalgebras and p-adic Homotopy Theory. arXiv:2402.15850 .
\bibitem{Barr}
Barr M.
Coalgebras over a commutative ring. J. Algebra 32 (1974), no. 3, 600--610.
\bibitem{B.R.}
 Borceux F.;  Rosik\'{y} J. Purity in algebra. Algebra universalis,  56, Issue 1, (2007), 15--35.
 \bibitem{Bou}
 Bousfield A.K. The localization of spaces with respect to homology.
 Topology 14, (1975), 133--150.
\bibitem{B.K}Bousfield, A. K.; Kan, D. M.  Homotopy limits, completions and localizations, Lecture Notes in Mathematics, vol. 304, Springer-Verlag (1972).
\bibitem{B.W.}
Brzezinski T.; Wisbauer R. Corings and comodules. London Mathematical Society Lecture Note Series, 309. Cambridge University Press, Cambridge, (2003). xii+476 pp.
 \bibitem{Do}
 Dold A. Lectures on algebraic topoloy.Grundlehere der mathematischen Wissenschaften 200, Springer Verlag (1980).
 
 \bibitem{D.G.M.}
  Duchamp  G.,  Grinberg D.,  Minh V. Three variations on the linear independence of grouplikes in a coalgebra,
 arXiv:2009.10970 [math.QA]  (2020)
 
\bibitem{E.J.}Enochs E.E.; Jenda, O. M. G. Relative homological algebra. De Gruyter Expositions in Mathematics, 30. Walter de Gruyter, Berlin, (2000). xii+339 pp.
\bibitem{Fa}
Faltings G.; p-Adic Hodge Theory. Journal of the Amer. Math. Soc. Vol. 1, No. 1, (1988) 255--299.
\bibitem{Elliot}
 Elliott J. Binomial rings, integer-valued polynomials, and $\lambda $-rings. Journal of pure and applied Algebra,
207(1):165–185, 2006.
\bibitem{F.S.1}
Fuchs L.; Salce L. Modules over non-Noetherian domains. Mathematical Surveys and Monographs, 84. American Mathematical Society, Providence, RI, (2001). xvi+613 pp.
 \bibitem{GeGo}
 Getzler E.; Goerss P.
 A model structure for differential graded coalgebras. Preprint (1999) available on the homepage of  Paul Goerss.
\bibitem{Gil 1}
Gillespie J. The flat model structure on Ch(R). Trans. Amer. Math. Soc. 356 (2004), no. 8, 3369--3390. 
  \bibitem{G}
   Goerss P.G. Simplicial chains over a field and p-local homotopy theory. Math. Z. 220 (1995), no. 4, 523--544.
  \bibitem{G.J.1}
   Goerss, P. G.; Jardine J. F. Simplicial homotopy theory. Progress in Mathematics, 174. Birkh\"auser Verlag, Basel, (1999). xvi+510 pp.
   \bibitem{G.J.2}
    Goerss, P. G.; Jardine, J. F. Localization theories for simplicial presheaves. Canad. J. Math. 50 (1998), no. 5, 1048--1089. 
   \bibitem{Hi}
   Hirschhorn P. S. Model categories and their localizations. Mathematical Surveys and Monographs, 99. American Mathematical Society, Providence, RI, (2003). xvi+457 pp.
\bibitem{Horel}
 Horel G. Binomial rings and homotopy theory, Journal f"ur die reine und angewandte Mathematik
(Crelles Journal), vol. 2024, no. 813, 2024, pp. 283-305,
\bibitem{Hovey2}
Hovey M. Model categories. Mathematical Surveys and Monographs, 63, Amer. Math. Soc., Providence, RI, (1999).
\bibitem{Hu}
Huneke C. Absolute Integral Closure. Contemporary Math.vol. 555 (2011), 119--135.
\bibitem{Kr}
Kriz I.  p-adic homotopy theory, Topology and its Applications 52 (1993), 279–308.
\bibitem{La}
Lam, T.Y.  Lectures on Modules and Rings. Graduate Texts in Mathematics. Vol. 189. New York, NY: Springer New York (1999).
 \bibitem{L}
 Lurie J. Higher Algebra. Available at Homepage.
 \bibitem{Ma2}
  Mandell M.
 E-infinity Algebras and p-Adic Homotopy Theory.
 Topology 40 (2001), no. 1, 43-94.
\bibitem{Ma}
Mandell, M.
Cochains and homotopy type. 
Publ. Math. Inst. Hautes \'{E}tudes Sci. No. 103 (2006), 213--246.
\bibitem{Ma3}
Algebraic Models for Homotopy Types IV
Algebraic Models for Integral Homotopy Types
Young topologists meeting 2013.
 \bibitem{Per}
 Percey A. Some relations between non-stable integral cohomology operations, 
 Bull. Korean Math. Soc. 47 (2010) 275–-286
  \bibitem{P}
 Prest M. Purity, spectra and localisation. Encyclopedia of Mathematics and its Applications, 121. Cambridge University Press, Cambridge, (2009). xxviii+769 pp. 
\bibitem{Q}
D.~G. Quillen. 
\newblock {\em Homotopical algebra.}
\newblock {Lecture Notes in Mathematics},
  No. 43, Springer-Verlag, Berlin, (1967).
  \bibitem{Q2}
   Quillen D. Rational homotopy theory, The Annals of Mathematics, Second Series, Vol. 90, No. 2 (Sep., 1969), pp. 205-295.
 \bibitem{Rad}   Radford D.E.. Hopf Algebras, World Scientific, Series on Knots and Everything, Vol. 49.  
  \bibitem{R}
Raptis G. Simplicial presheaves of coalgebras.
Algebraic and Geometric Topology 13 (2013). 1967-2000.
\bibitem{R.R.}
Raptis G.and M. Rivera M. The simplicial coalgebra of chains under three different notions of weak
equivalence, IMRN, no. 16, 11766-11811 (2024).
\bibitem{R.W.Z.}
 Rivera M.,  Wierstra F., and  Zeinalian M. The simplicial coalgebra of chains determines
homotopy types rationally and one prime at a time. Trans. Amer. Math. Soc. 375 (2022), no.
5, 3267–3303.
\bibitem{St}
 Stelzer M., Purity and homotopy theory of coalgebras,
Journal of Pure and Applied Algebra 223 (2019) 2455–2473.
  \bibitem{S}
   Sweedler M. E. Hopf algebras. Mathematics Lecture Note Series W. A. Benjamin, Inc., New York (1969) vii+336 pp. 
   \bibitem{Su}
   Sullivan, D. Infinitesimal computations in topology.
   Publications Mathématiques de l'IHÉS, Volume 47 (1977), pp. 269-331.
  \bibitem{Y}
  Yuan A. Integral Models for Spaces via the Higher Frobenius. Journal of the American Mathematical Society (2023).
  \end{thebibliography}

\end{document}